\documentclass[a4paper,reqno, pdftex]{amsart}

\AtBeginDocument{%
  \def\MR#1{}
}

\usepackage{tikz, tikz-cd}
\usepackage{amsmath,amssymb}
\usepackage{amsthm}
\usepackage{mathrsfs}
\usepackage[shortlabels]{enumitem}
\usepackage{mathtools}
\usepackage[all,cmtip,2cell]{xy}
\usepackage{array}
\usepackage{comment}
\usepackage{braket}
\usepackage{stmaryrd}

\usepackage{url}
\usepackage{ascmac}
\usepackage{upgreek,euscript,dsfont}

\usepackage{graphicx}
\usepackage{multirow}

\usepackage[top=30truemm,bottom=30truemm,left=35truemm,right=35truemm]{geometry}

\usepackage[colorlinks=true,citecolor=green(pigment),linkcolor=alizarin,
urlcolor=denim]{hyperref}

\definecolor{alizarin}{rgb}{0.82, 0.1, 0.26}
\definecolor{azure(colorwheel)}{rgb}{0.0, 0.5, 1.0}
\definecolor{blue(pigment)}{rgb}{0.2, 0.2, 0.6}
\definecolor{denim}{rgb}{0.08, 0.38, 0.74}
\definecolor{mint}{rgb}{0.24, 0.71, 0.54}
\definecolor{parisgreen}{rgb}{0.31, 0.78, 0.47}
\definecolor{persiangreen}{rgb}{0.0, 0.65, 0.58}
\definecolor{seagreen}{rgb}{0.18, 0.55, 0.34}
\definecolor{shamrockgreen}{rgb}{0.0, 0.62, 0.38}
\definecolor{green(pigment)}{rgb}{0.0, 0.65, 0.31}

\theoremstyle{plain}
\newtheorem{thm}{Theorem}[section]
\newtheorem{prop}[thm]{Proposition}
\newtheorem{lem}[thm]{Lemma}

\newtheorem{cor}[thm]{Corollary}
\newtheorem*{thm*}{Theorem}

\theoremstyle{definition}
\newtheorem{dfn}[thm]{Definition}
\newtheorem{conj}[thm]{Conjecture}

\newtheorem{rem}[thm]{Remark}
\newtheorem{nota}[thm]{Notation}

\newtheorem{ex}[thm]{Example}

\theoremstyle{remark}

\numberwithin{equation}{section}

\newcommand{\ii}{\kern 1pt {\rm i}\kern 1pt }

\newcommand{\Z}{\mathbb{Z}}
\newcommand{\Q}{\mathbb{Q}}

\newcommand{\C}{\mathbb{C}}

\DeclareMathOperator{\rk}{rk}
\DeclareMathOperator{\id}{id}

\DeclareMathOperator{\Spec}{\mathrm{Spec}}

\DeclareMathOperator{\coh}{coh}

\DeclareMathOperator{\Mod}{Mod}

\DeclareMathOperator{\catmodif}{modif}
\DeclareMathOperator{\catMM}{MM}

\DeclareMathOperator{\catCT}{CT}

\DeclareMathOperator{\codim}{\mathrm{codim}}

\DeclareMathOperator{\add}{add}

\DeclareMathOperator{\refl}{ref}
\DeclareMathOperator{\Hom}{Hom}

\DeclareMathOperator{\End}{End}

\DeclareMathOperator{\Ker}{Ker}
\DeclareMathOperator{\Cok}{Cok}
\DeclareMathOperator{\Supp}{Supp}
\DeclareMathOperator{\Ext}{Ext}

\DeclareMathOperator{\Cl}{Cl}
\DeclareMathOperator{\Aut}{Aut}

\DeclareMathOperator{\Res}{Res}
\DeclareMathOperator{\Ind}{Ind}

\newcommand{\scrT}{\EuScript{T}}

\newcommand{\sfF}{{\sf F}}
\newcommand{\sfG}{{\sf G}}

\let\Im\relax
\DeclareMathOperator{\Im}{\mathrm{Im}}

\newcommand{\rp}{\,{\cdot}\,}

\DeclareMathOperator{\MM}{{\sf MM}}

\DeclareMathOperator{\NCCR}{\sf NCCR}
\DeclareMathOperator{\CT}{\sf CT}

\mathchardef\mhyphen="2D

\renewcommand{\l}{\langle}
\renewcommand{\r}{\rangle}

\newcommand{\simto}{\,\,\,\,\raisebox{4pt}{$\sim$}{\kern -1.1em\longrightarrow}\,}
\newcommand{\hookto}{\hookrightarrow}

\makeatletter
\newcommand*{\defeq}{\mathrel{\rlap{%
                    \raisebox{0.3ex}{$\m@th\cdot$}}%
                    \raisebox{-0.3ex}{$\m@th\cdot$}}%
                     =}
\makeatother

\makeatletter
\newcommand*{\defqe}{=\mathrel{\rlap{%
                    \raisebox{0.3ex}{$\m@th\cdot$}}%
                    \raisebox{-0.3ex}{$\m@th\cdot$}}%
                     }
\makeatother

\newcommand{\cF}{\mathcal{F}}

\newcommand{\cN}{\mathcal{N}}

\newcommand{\fmod}{\operatorname{mod}}

\DeclareMathOperator{\fl}{\mathrm{f\hspace{0.0001mm}l}}

\DeclareMathOperator{\bLam}{\bf \Lambda}

\newcommand{\modif}{\operatorname{\sf{modif}}}

\DeclareMathOperator{\proj}{proj}
\DeclareMathOperator{\Kb}{\mathrm{K}^{\mathrm{b}}}

\def\gldim{\mathop{\rm gl.dim}\nolimits}
\def\prdim{\mathop{\rm proj.dim}\nolimits}
\def\injdim{\mathop{\rm inj.dim}\nolimits}

\DeclareMathOperator{\CM}{CM}

\numberwithin{equation}{section}

\usepackage[mathlines]{lineno}
\usepackage{amsmath}

\usepackage{etoolbox}

\newcommand*\linenomathpatch[1]{%
  \cspreto{#1}{\linenomath}%
  \cspreto{#1*}{\linenomath}%
  \csappto{end#1}{\endlinenomath}%
  \csappto{end#1*}{\endlinenomath}%
}

\newcommand*\linenomathpatchAMS[1]{%
  \cspreto{#1}{\linenomathAMS}%
  \cspreto{#1*}{\linenomathAMS}%
  \csappto{end#1}{\endlinenomath}%
  \csappto{end#1*}{\endlinenomath}%
}

\expandafter\ifx\linenomath\linenomathWithnumbers
  \let\linenomathAMS\linenomathWithnumbers

  \patchcmd\linenomathAMS{\advance\postdisplaypenalty\linenopenalty}{}{}{}
\else
  \let\linenomathAMS\linenomathNonumbers
\fi

\linenomathpatch{equation}
\linenomathpatchAMS{gather}
\linenomathpatchAMS{multline}
\linenomathpatchAMS{align}
\linenomathpatchAMS{alignat}
\linenomathpatchAMS{flalign}

\newcommand{\m}{\mathfrak{m}}

\newcommand{\p}{\mathfrak{p}}
\newcommand{\q}{\mathfrak{q}}

\title[Equivariant NCCRs and $G$-MM modules]{Equivariant noncommutative crepant resolutions\\ and $G$-maximal modifying modules}

\author[Y.HIRANO]{Yuki Hirano}

\address[Y.Hirano]{Tokyo University of Agriculture and Technology, 2-24-16 Nakacho, Koganei, Tokyo 184-8588, Japan}
\email{hirano@go.tuat.ac.jp}

\begin{document}

\begin{abstract}
Let $S$ be a Cohen--Macaulay normal domain containing a field $k$ with an action of a finite group $G$ with $|G|$ invertible in $k$. We prove that  $G$-equivariant noncommutative crepant resolutions of $S$ induce noncommutative crepant resolutions of $S^G$ if the $G$-action is small. 
 We also discuss similar results for equivariant maximal modification algebras. We introduce the notion of $G$-maximal modifying ($G$-MM) module, and we show that  a $G$-MM module over $S$ induces an MM $S^G$-module in  some special cases. Furthermore, we prove that Gorenstein MMAs of three-dimensional $\Q$-Gorenstein Cohen--Macaulay rings are derived equivalent. As an application of these results, we prove the existence of an MM module and derived equivalences of MMAs for three-dimensional terminal singularities of index two.
\end{abstract}

\maketitle

\section{Introduction}

The notion of a noncommutative crepant resolution (NCCR) was introduced by Van den Bergh as a noncommutative ring analogue of a crepant resolution of  singularities \cite{VdB04a}. NCCRs have been studied for surface singularities \cite{Aus}, toric singularities \cite{VdB04a, boc, bro, hn, fms, svdb2, svdb3, nak, mat, tom, han, mal, ms1, ms2}, quotient singularities \cite{VdB04a, svdb1, ly, hh}, hypersurface singularities \cite{VdB02, Dao, IW14b}, cones over (Du Val) del Pezzo surfaces \cite{VdB04a, tom1, nvdb}, and symplectic singularities \cite{har}. Recently, NCCRs over quotient rings by regular sequences have been studied by Enomoto \cite{Eno}.

The notion of an NCCR was originally introduced for a Gorenstein ring $R$ as the endomorphism algebra 
\[
\bLam_M\defeq\End_R(M)
\] 
of some $M\in\refl R$ such that $\bLam_M\in\CM R$ and $\gldim\bLam_M<\infty$, where $\refl R$ denotes the category of  finitely generated reflexive $R$-modules.  Later, it was generalized for Cohen--Macaulay rings as follows:

\begin{dfn}[\cite{IW14a}] Let $R$ be a Cohen--Macaulay ring, and let $M\in\refl R$. 
\begin{itemize}
\item[(1)] $\bLam_M$ is an {\it NCCR} of $R$ if $\bLam_M\in\CM R$ and $\gldim(\bLam_M)_{\p}=R_{\p}$ for every $\p\in\Spec R$. If $\bLam_M$ is an NCCR, we say that $M$ {\it gives an NCCR}.
\item[(2)] $M$ is a {\it CT} module if $M\in\CM R$ and 
\[
\add M=\{N\in \CM R\,|\, \Hom(M,N)\in\CM R\}=\{L\in\CM R \,|\, \Hom(L,M)\in\CM R\}.
\] 
\end{itemize}
\end{dfn}
For a Cohen--Macaulay local ring $R$ with a canonical module,  $M\in\CM R$ is a CT module if and only if it gives an NCCR and $R\in\add M$. Dao--Iyama--Takahashi--Wemyss proved that the existence of an NCCR of $R$ requires $R$ to be $\Q$-Gorenstein \cite{DITW}. Furthermore, they proved that a $\Q$-Calabi--Yau ring $R$ of index $r$ containing a field of characteristic zero admits an NCCR if and only if its canonical cover $S$ admits an NCCR that arises from a $(\Z/r\Z)$-gradable $S$-module, where the grading of $S$ comes from the covering structure of $S$. If $k$ contains the $r$-th root of unity, the $(\Z/r\Z)$-grading of $S$ corresponds to a $(\Z/r\Z)$-action on $S$. A main ingredient  used in \cite{DITW} was a Gabriel cover, which cannot be applied for nonabelian group actions. This paper generalizes their setting to 
 arbitrary small finite group actions on Cohen--Macaulay normal domains containing a field.

 \vspace{2mm}
\subsection{Equivariant NCCRs}~
\vspace{2mm}

Let $S$ be a Cohen--Macaulay normal domain containing a field $k$, and let $G$ be a finite group such that  $|G|$ is invertible in $k$. Assume that $S$ has a canonical module. Consider a left $G$-action on $S$ given by a group homomorphism
\[
\upsigma\colon G\to \Aut_k(S); \quad g\mapsto \upsigma_g\colon S\simto S,
\]
where $\Aut_k(S)$ denotes the group of $k$-algebra automorphisms of $S$.  We set $R\defeq S^G$, and denote by
\[
\upvarphi\colon \Spec S\to \Spec R
\]
the affine quotient map. 
A {\it $G$-equivariant} $S$-module is defined to be a pair $(N,\theta)$ of an $S$-module $N$ together with a family $\theta=\{\theta_g\}_{g\in G}$ of isomorphisms $\theta_g\colon \upsigma_g^*N\simto N$ satisfying the cocycle condition. Such a family $\theta$ is called a {\it $G$-equivariant structure} on $N$. We say that $N\in\fmod S$ is {\it $G$-equivariantizable} if there is a $G$-equivariant structure $\theta$ on $N$. We denote by $\refl_GS$ the category of $G$-equivariant finitely generated reflexive $S$-modules. A {\it $G$-equivariant $S$-algebra} is an $S$-algebra $\Lambda$ with a $G$-action that is compatible with the $G$-action on $S$.  
For $(N,\theta)\in\refl_GS$, the endomorphism algebra $\bLam_N=\End_S(N)$ is a $G$-equivariant $S$-algebra, where the $G$-action is induced from $\theta$. A {\it $G$-equivariant NCCR} of $S$ is a $G$-equivariant $S$-algebra of the form $\bLam_N$ for some $(N,\theta)\in\refl_GS$ such that $\bLam_N$ is an NCCR of $S$.

First, we prove that a $G$-equivariant NCCR of $S$ induces an NCCR of $R$ when the $G$-action on $S$ is small. Here, 
we say that the $G$-action on $S$ is {\it small} if the free locus $U\subseteq \Spec S$ satisfies the condition ${\rm codim}(\Spec S\backslash U)\geq 2$. 

\begin{thm}\label{intro main 1}
 Let $(N,\theta)\in\refl_GS$. Assume that the $G$-action on $S$ is small.
\begin{itemize}
\item[$(1)$] $\bLam_N$ is an NCCR of $S$ if and only if $\bLam_{\upvarphi_*N}$ is an NCCR of $R$.
\item[$(2)$] $N$ is a CT $S$-module if and only if $\upvarphi_*N$ is a CT $R$-module. 
\end{itemize}
\end{thm}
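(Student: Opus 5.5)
The approach is to compare $\bLam_{\upvarphi_*N}=\End_R(N)$ with the skew group algebra $\bLam_N * G=\End_S(N)*G$, using the smallness of the action. First, there is a natural $R$-algebra map
\[
\Phi\colon \bLam_N * G\to \End_R(N),\qquad f\,g\mapsto f\circ \theta_g\text{-twist},
\]
where $g$ acts on $N$ through $\theta$. Both sides are reflexive $R$-modules: $\End_R(\upvarphi_*N)$ because $\upvarphi_*N$ is reflexive, and $\bLam_N*G\cong \bLam_N^{\oplus|G|}$ as $R$-modules. On the free locus $U$ the map $\upvarphi|_U$ is étale and Galois, so Galois descent (the classical Auslander isomorphism $S*G\cong\End_R(S)$ over the unramified locus) shows that $\Phi$ is an isomorphism over $\upvarphi(U)$. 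Smallness gives $\codim(\Spec R\setminus\upvarphi(U))\ge 2$, since $\upvarphi$ is finite. Hence $\Phi$ is an isomorphism of reflexive modules in codimension one, and therefore an isomorphism. So $\bLam_{\upvarphi_*N}\cong\bLam_N*G$.

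Next we transfer the two defining properties of an NCCR.

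\emph{Cohen--Macaulayness.} $\bLam_N*G\cong\bLam_N^{\oplus|G|}$ as $S$-modules, and an $S$-module is CM over $R$ iff it is CM over $S$, because $S$ is module-finite over $R$ and depth is computed along the finite map. So $\bLam_N\in\CM S$ iff $\bLam_{\upvarphi_*N}\in\CM R$.

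\emph{Global dimension.} Since $|G|$ is invertible, $\Lambda*G$ is a separable extension of $\Lambda$. Concretely, every $\Lambda*G$-module $X$ is a direct summand of $(\Lambda*G)\otimes_\Lambda X$, and every $\Lambda$-module $Y$ is a summand of $\Res\Ind Y$. This gives $\gldim(\Lambda*G)=\gldim\Lambda$, and the equality localizes: for $\p\in\Spec R$, $(\bLam_N*G)_\p=(\bLam_N)_\p*G$, where $(\bLam_N)_\p$ is an algebra over the semilocal ring $S_\p$. The NCCR condition on $S$ is stated at primes $\q$ of $S$, whereas after localizing at $\p$ we have the semilocal ring $S_\p$. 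So we must check that $\gldim(\bLam_N)_\p=\dim R_\p$ iff $\gldim(\bLam_N)_\q=\dim S_\q$ for all $\q$ over $\p$. This holds because global dimension over a semilocal ring is the supremum over its maximal ideals, all $\q$ over $\p$ have height equal to $\dim R_\p$ (the extension is integral and $G$ permutes them transitively), and "for all $\p$" matches "for all $\q$". Together these give (1). I expect this bookkeeping to be routine, modulo the standard fact that $\gldim\Lambda\le\dim$ for module-finite CM-type algebras.

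\emph{CT modules, part (2).} By the stated characterization, a CT module is one that gives an NCCR and has the base ring in its additive closure; this can be checked locally with canonical modules. The first condition transfers by (1), so it remains to show $S\in\add N$ iff $R\in\add\upvarphi_*N$. If $S\in\add N$, then $R$ is a summand of $\upvarphi_*S$ (via the Reynolds operator, using $|G|$ invertible), hence $R\in\add\upvarphi_*N$. The converse is the main obstacle. Given $R\mid(\upvarphi_*N)^{\oplus m}$, I would apply $\Hom_R(S,-)^{**}=(S\otimes_R-)^{**}$, which is reflexive hull of pullback. This gives $S\mid(S\otimes_R\upvarphi_*N)^{**}$. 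By smallness and étale descent on $U$, we have $(S\otimes_R\upvarphi_*N)^{**}\cong\bigoplus_{g}\upsigma_g^*N\cong N^{\oplus|G|}$, using $\theta$. Hence $S\in\add N$. Alternatively, one can use the intrinsic characterization $\add M=\{L\in\CM\mid \Hom(L,M)\in\CM\}$, transported through the equivalence $\refl R\simeq\refl_G S$ induced by smallness.
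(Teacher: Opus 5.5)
Your proof is correct in substance, but it takes a partly different route from the paper. The differences are at three points.

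(i) For the Auslander isomorphism $\bLam_{\upvarphi_*N}\cong\bLam_N\#G$, you write down the natural map. You check that it is bijective at height-one primes of $R$, where $R\to S$ is finite \'etale Galois by smallness, and then extend by reflexivity. This is the classical Auslander argument, and it makes the isomorphism explicit. The paper (Proposition~\ref{auslander}) never writes the map down. It composes three facts:
\begin{itemize}
\item Hashimoto's equivalence $(-)^G\colon\refl_GS\simeq\refl R$ (Proposition~\ref{inv refl});
\item the identification $(\upvarphi^*\upvarphi_*N)^{**}\cong\Ind(N)$ (Lemma~\ref{varphi ind});
\item the computation $\End_S(\bigoplus_g\upsigma_g^*N)^G\cong\bLam_N\#G$ (Proposition~\ref{skew end}).
\end{itemize}
So all the descent is delegated to \cite{Has}.

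(ii) For non-singularity, you prove $\gldim(\Lambda\#G)=\gldim\Lambda$ from the induction/restriction splittings, and then match primes of $R$ with their fibres in $\Spec S$. The paper uses the same splittings, but packages them through the criterion $\CM\Lambda=\proj\Lambda$ for modification algebras (Lemma~\ref{nccr mm}(2)) together with Corollary~\ref{proj eq}. That route avoids localising altogether.

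(iii) For the converse in the CT statement, you apply Proposition~\ref{char ct} a second time, getting $S=(\upvarphi^*R)^{**}\in\add(\upvarphi^*\upvarphi_*N)^{**}=\add N$ from Lemma~\ref{varphi ind}. This is shorter than the paper's argument. The paper verifies the defining property of a CT module directly: it reduces to equivariantizable test modules $L$ and uses $\Hom_R(\upvarphi_*N,\upvarphi_*L)\cong\upvarphi_*\Hom_S(N,L)^{\oplus|G|}$.

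One step of your bookkeeping in (ii) needs repair. From $\sup_{\q\mid\p}\gldim(\bLam_N)_{\q}=\gldim(\bLam_N)_{\p}=\dim R_{\p}$ you only get $\gldim(\bLam_N)_{\q}\le\dim S_{\q}$ for each $\q$ over $\p$, with equality for at least one. The fact you invoke, ``$\gldim\Lambda\le\dim$'', is not the needed input: it points the wrong way, and it fails in general (e.g.\ for $\Lambda=S$ singular). Either of the following gives equality for every $\q$:
\begin{itemize}
\item The $G$-action on $\bLam_N$ is compatible with the action on $S$. So each $g\in G$ induces a ring isomorphism $(\bLam_N)_{\q}\cong(\bLam_N)_{g\q}$, and $G$ acts transitively on the fibre over $\p$.
\item Since $\bLam_N\in\CM S$, the Auslander--Buchsbaum formula \cite[Lemma 2.16]{IW14a}, applied to simple modules, shows that $\gldim(\bLam_N)_{\q}$ equals $\dim S_{\q}$ whenever it is finite.
\end{itemize}
With this fix the argument is complete.
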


If $S$ is regular, then the free module $S$ has a canonical $G$-equivariant structure and it is a CT $S$-module. Thus, Theorem \ref{intro main 1}\,(2) implies that $\upvarphi_*S$ is a CT $R$-module if $S$ is regular, which recovers  \cite[Theorem 5.7]{IW14a}. By Theorem \ref{intro main 1}\,(1), there is a map 
\[
\upvarphi_*\colon G\text{-}\NCCR S\to \NCCR R,
\]
where $\NCCR R$ (resp. $G\text{-}\NCCR S$) is the set of  additive equivalence classes of reflexive $R$-modules (resp. $G$-equivariantizable reflexive $S$-modules) giving NCCRs. Here, we say that two modules $N$ and $L$ are {\it additively equivalent} if $\add N=\add L$. Similarly, by Theorem \ref{intro main 1}\,(2), there is a map $\upvarphi_*\colon G\text{-}\CT S\to \CT R$, where $\CT R$ (resp. $G\text{-}\CT S$) is the set of  additive equivalence classes of CT $R$-modules (resp. $G$-equivariantizable CT $S$-modules).

\begin{thm}\label{intro main 2} Assume that the $G$-action on $S$ is small.
Then the functor $\upvarphi_*\colon \fmod S\to\fmod R$ induces injective maps
\[
\upvarphi_*\colon G\text{-}\NCCR S\hookto \NCCR R\quad\mbox{and}\quad\upvarphi_*\colon G\text{-}\CT S\hookto \CT R.
\]
\end{thm}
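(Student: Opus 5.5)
Well-definedness of both maps comes from Theorem \ref{intro main 1}. If $\add N=\add L$, then $\add \upvarphi_*N=\add\upvarphi_*L$, since $\upvarphi_*$ is additive and preserves direct sums and summands. So the whole content of the statement is injectivity. Concretely, for $G$-equivariantizable $N,L\in\refl S$, I must show that $\add\upvarphi_*N=\add\upvarphi_*L$ in $\fmod R$ forces $\add N=\add L$ in $\fmod S$.

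The key tool I will use is the reflexive pullback $\upvarphi^{\natural}\colon \refl R\to\refl S$, defined by $M\mapsto (S\otimes_R M)^{**}$, together with the following claim. For any $G$-equivariant $(N,\theta)\in\refl_GS$ there should be an isomorphism
\[
(S\otimes_R \upvarphi_*N)^{**}\cong \bigoplus_{g\in G}\upsigma_g^*N\cong N^{\oplus |G|}.
\]
The second isomorphism holds because each $\theta_g$ identifies $\upsigma_g^*N$ with $N$ as $S$-modules.

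To prove the first isomorphism, I will use smallness. Both sides are reflexive $S$-modules, and $\codim(\Spec S\setminus U)\geq 2$. Hence it suffices to produce a natural map between them that is an isomorphism over the free locus $U$, and then extend it by reflexivity. Over $U$, the map $U\to U/G$ is an étale Galois $G$-cover, so $S\otimes_R S\cong\prod_{g\in G}S$ there. Tensoring with $N$ gives $S\otimes_R N\cong\bigoplus_g \upsigma_g^*N$ on $U$. The natural map is $s\otimes n\mapsto (s\cdot g(n))_g$, written with the twisted module structures. Because $U\to U/G$ is étale, this is an isomorphism over $U$ by Galois descent. Since $|G|$ is invertible, the quotient map is well behaved and $\upvarphi_*$ preserves reflexivity, which keeps both sides inside $\refl$.

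Granting the claim, injectivity follows quickly. Suppose $\add\upvarphi_*N=\add\upvarphi_*L$. Then $\upvarphi_*N$ is a summand of $(\upvarphi_*L)^{\oplus m}$ for some $m$. Applying the additive functor $\upvarphi^{\natural}$, we get that $N^{\oplus|G|}$ is a summand of $L^{\oplus m|G|}$, hence $N\in\add L$. By symmetry $L\in\add N$, so $\add N=\add L$. The same argument applies verbatim to CT modules. There, well-definedness uses Theorem \ref{intro main 1}(2), which also guarantees that $\upvarphi_*$ preserves the CT property.

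I expect the main obstacle to be checking that the comparison map in the claim is defined on all of $\Spec S$, not merely on $U$. One option is to define the map $S\otimes_R N\to\bigoplus_g\upsigma_g^*N$ globally by the formula above and then pass to double duals. The other is to note that $\refl S\simeq\refl U$ via restriction, again because of smallness and normality. I would take the second route, since it avoids any issue with $S$ failing to be flat over $R$.
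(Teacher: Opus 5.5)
Your proof is correct and follows the paper's route. The paper also reduces injectivity to the isomorphism $(\upvarphi^*\upvarphi_*N)^{**}\cong\bigoplus_{g\in G}\upsigma_g^*N$ (Lemma \ref{varphi ind}) and then compares $\add$ exactly as you do. The only difference is how that isomorphism is obtained: the paper derives it from $\upvarphi_*N\cong\Ind(N)^G$ together with Hashimoto's reflexive equivalence $(-)^G\colon\refl_GS\simeq\refl R$, whose inverse is $M\mapsto(\upvarphi^*M)^{**}$, whereas you verify it directly from the torsor isomorphism $S\otimes_RS\cong\prod_{g}S$ over the free locus and then extend across codimension two. Neither of your two extension routes needs flatness, so that worry is unnecessary.
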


If $S$ is a canonical cover of $R$ and the $G$-action on $S$ is a cyclic group action induced from the covering, the injective map  $\upvarphi_*\colon G\text{-}\NCCR S\hookto \NCCR R$ is bijective by \cite{DITW}. However, the surjectivity does not hold in our generality. 

\vspace{2mm}
\subsection{$G$-maximal modifying modules}~
\vspace{2mm}

Although crepant resolutions and NCCRs do not exist in general,
 $\Q$-factorial terminalizations (also called {\it minimal models}) always exist for normal algebraic  varieties over $\C$ with only canonical singularities. Thus it is natural to consider not only an NCCR but also a noncommutative analogue of a minimal model. Iyama--Wemyss introduced the notion of a {\it maximal modification algebra} ({\it MMA}) as a noncommutative analogue of a minimal model \cite{IW14a}, and they proved that  MMAs  correspond to  minimal models at the level of derived categories of three-folds \cite[Theorem 4.16]{IW14b}. 
 
\begin{dfn}[\cite{IW14a}]
Let $S$ be a Cohen--Macaulay ring. A reflexive module $N\in\refl S$ is {\it modifying} if $\bLam_N\in\CM S$. Denote by 
$\catmodif S$ the category of modifying $S$-modules. A modifying module $N$ is {\it maximal modifying (MM)} if the following implication holds:
 \[
 N\oplus L\in\catmodif S \quad\Rightarrow \quad L\in\add N.
 \] 
 A {\it maximal modification algebra} ({\it MMA}) of $S$ is the endomorphism ring $\bLam_N$ of a maximal modifying $S$-module $N$.
\end{dfn}
 
If $N\in\catmodif S$ is not maximal, we can pick $L\in\catmodif S$ such that $L\not\in\add N$ and $N\oplus L\in \catmodif S$.  Since we do not know if this process terminates, the existence of an MM module (or equivalently, an MMA) is unclear. However, the following expectation seems natural,
when we expect that a noncommutative analogue of minimal model theory holds.

\begin{conj}[cf. {\cite[Conjecture 1.8\,(1)]{IW14b}}]\label{intro conj}
Let $\Spec R$ be a three-dimensional normal affine variety over $\C$ with only canonical singularities. Then the following hold:
\begin{itemize}
\item[(1)] $R$ admits an MMA. 
\item[(2)] $\widehat{R_{\m}}$ admits an MMA for every maximal ideal $\m$ of $R$.
\end{itemize}
\end{conj}

\begin{rem} Even if $M$ is an MM $R$-module,  the module $\widehat{M_{\m}}$ is not an MM $\widehat{R_{\m}}$-module in general \cite[Example 2.13]{IW14b}, and we do not know whether the implication  (1) $\Rightarrow$ (2) holds. Conversely, for a modifying $R$-module $M$, if  $\widehat{M_{\m}}$ is an MM $\widehat{R_{\m}}$-module for every maximal ideal $\m$ of $R$, then $M$ is an MM $R$-module. However, this does not necessarialy imply the direction (2) $\Rightarrow$ (1), since we do not know a priori that each MM module of $\widehat{R_{\m}}$ is given as the completion of  a common modifying $R$-module. 
\end{rem}

The following is a version of the noncommutative Bondal-Orlov conjecture.

\begin{conj}[cf. {\cite[Conjecture 1.8\,(2)]{IW14b}}, {\cite[Conjecture 1.4]{IW13}}]\label{intro conj 2}
Let $R$ be a three-dimensional Cohen--Macaulay equi-codimensional normal domain with a canonical module.   Then all MMAs are derived equivalent. 
\end{conj}

Conjecture \ref{intro conj} (1), (2) for Gorenstein terminal singularities were proved  in \cite{IW14b} using geometric results in \cite{VdB02}. A purely algebraic proof of Conjecture \ref{intro conj} (2) was given in \cite{IWtits}. However, there are few known examples of non-Gorenstein CM 3-folds that admit an MMA but no NCCR. Conjecture \ref{intro conj 2} was proved when $R$ admits an NCCR \cite{IW13} or $R$ is Gorenstein \cite[Theorem 4.16]{IW14a}.

We prove Conjecture \ref{intro conj} and Conjecture \ref{intro conj 2} for a certain family of non-Gorenstein singularities, which includes  terminal singularities of index two. A key ingredient for Conjecture  \ref{intro conj} is the notion of $G$-MMA, which is introduced below, and an MMA version of Theorem \ref{intro main 2}. 
In what follows, we use the same notation as in the previous section. 

\begin{dfn}
A $G$-equivariantizable modifying $S$-module $N$ is {\it $G$-maximal modifying} ({\it $G$-MM}) if  for every $G$-equivariantizable modifying $S$-module $L$ such that $N\oplus L\in\catmodif S$, we have $L\in\add N$. A {\it $G$-maximal modification algebra} ({\it $G$-MMA}) is the endomorphism ring $\bLam_N$ for some $G$-MM module $N$. 
\end{dfn}

It is obvious that a $G$-equivariantizable MM module  is a $G$-MM module. However, the converse is not true in general. Furthermore, it could occur that there are a $G$-MM module but no $G$-equivariantizable MM module. Denote by $\catmodif_GS$ the category of $G$-equivariant modifying $S$-modules, and write  $\MM R$ (resp. $G\text{-}\MM S$) for the set of additive equivalence classes of  MM $R$-modules (resp. $G$-MM $S$-modules). The following is an MM version of Theorem \ref{intro main 2}.

\begin{thm}\label{intro main 3}
Assume that the $G$-action on $S$ is small, and the functor $\upvarphi_*\colon \catmodif_GS\to \catmodif R$ is  dense. 
Then, the functor $\upvarphi_*\colon\fmod S\to\fmod R$ induces a bijective map
\[
\upvarphi_*\colon\mbox{$G$-}\MM S\simto \MM R.
\]
In particular, an MM $R$-module exists if and only if so does a $G$-MM $S$-module. 
\end{thm}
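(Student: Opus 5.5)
Throughout, write $R=S^G$ and $\upvarphi_*$ for restriction of scalars. The plan is to reduce everything to one key fact: for a small action, $\upvarphi_*$ restricted to $G$-equivariantizable reflexive modules is "fully faithful up to the $G$-action". Precisely, for $(N,\theta),(L,\theta')\in\refl_GS$ I would first establish
\[
\Hom_R(\upvarphi_*N,\upvarphi_*L)\cong \upvarphi_*\bigl(\Hom_S(N,L)\ast G\bigr)^{\text{skew}}\cong \Hom_S(N,L)\otimes_k kG
\]
as $R$-modules. More concretely, $\Hom_R(\upvarphi_*N,\upvarphi_*L)$ is a direct sum of copies of $\upvarphi_*$ of twisted Hom modules $\Hom_S(N,\upsigma_g^*L)\cong\Hom_S(N,L)$. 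This holds on the free locus $U$, where $S\otimes_R S\cong\prod_G S$ étale-locally. Since both sides are reflexive and $\codim(\Spec S\setminus U)\geq2$, the isomorphism extends to all of $\Spec R$. I expect this to be the same input used for Theorem \ref{intro main 1}, so I would simply invoke it.

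From it I would derive the following.

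(a) $N\in\catmodif S$ if and only if $\upvarphi_*N\in\catmodif R$. Here I use that $\upvarphi_*$ preserves and reflects CM-ness, since $S$ is module-finite over $R$ and depth is computed over $R$, and also preserves reflexivity.

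(b) $\add_R\upvarphi_*N\ni\upvarphi_*L$ if and only if $L\in\add_SN$, for $G$-equivariantizable $N,L$. For the "if" direction, apply $\upvarphi_*$. For the "only if" direction, $\upvarphi_*L$ is a summand of $\upvarphi_*N^{\oplus n}$. Applying $S\otimes_R-$ followed by reflexive hull gives $(S\otimes_R\upvarphi_*L)^{**}\cong\bigoplus_{g}\upsigma_g^*L\cong L^{\oplus|G|}$, using the equivariant structure. Since this is a summand of $N^{\oplus n|G|}$, Krull–Schmidt-free arguments give $L\in\add N$. This is exactly the injectivity statement of Theorem \ref{intro main 2}, and I would cite that argument.

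Next I show that $\upvarphi_*$ sends $G$-MM modules to MM modules. Let $N$ be $G$-MM, and suppose $\upvarphi_*N\oplus M\in\catmodif R$. By density, $M\cong\upvarphi_*L$ for some $(L,\theta')\in\catmodif_GS$ (density up to summands would also suffice after adding a complement). Then $\upvarphi_*(N\oplus L)\in\catmodif R$, so $N\oplus L\in\catmodif S$ by (a). Since $L$ is $G$-equivariantizable, $G$-maximality gives $L\in\add N$, and hence $M\in\add\upvarphi_*N$.

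Conversely, I show that any MM $R$-module $M$ comes from a $G$-MM module. By density, $M\cong\upvarphi_*N$ with $N\in\catmodif_GS$. If $L$ is $G$-equivariantizable and $N\oplus L\in\catmodif S$, then $N\oplus L$ carries the sum of the equivariant structures, so by (a) $\upvarphi_*N\oplus\upvarphi_*L\in\catmodif R$. Maximality then gives $\upvarphi_*L\in\add\upvarphi_*N$, and (b) gives $L\in\add N$. So $N$ is $G$-MM.

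Together, these show that the map $G\text{-}\MM S\to\MM R$ is well defined and surjective. Injectivity follows from (b): if $\add\upvarphi_*N=\add\upvarphi_*N'$, then $N\in\add N'$ and $N'\in\add N$. The final sentence of the theorem is immediate.

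The main obstacle is step (b), specifically recovering $L$ from $\upvarphi_*L$ via $(S\otimes_R\upvarphi_*L)^{**}\cong L^{\oplus|G|}$. This is where smallness is essential, since it is what lets the splitting over $U$ extend across codimension two. I would prove it on $U$ and extend by reflexivity, or cite the corresponding lemma behind Theorem \ref{intro main 2}.
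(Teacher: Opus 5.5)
There is a genuine gap in your proof that $\upvarphi_*N$ is MM when $N$ is $G$-MM.

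You write ``by density, $M\cong\upvarphi_*L$''. In this paper, density only means that $M$ is a direct summand of $\upvarphi_*L$ for some $(L,\theta')\in\catmodif_GS$. The strong form you use is essentially never available. A small action is faithful, so $\rk_R\upvarphi_*L=|G|\rk_S L$; for example, $M=R$ is never of the form $\upvarphi_*L$ once $G\neq 1$. Your parenthetical fix (``adding a complement'') does not work either. If $\upvarphi_*L\cong M\oplus M'$, nothing controls $\Hom_R(\upvarphi_*N,M')$ or $\Hom_R(M',\upvarphi_*N)$. So you cannot conclude $\upvarphi_*(N\oplus L)\in\catmodif R$, and therefore cannot apply (a) and $G$-maximality to $L$.

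The missing idea is to apply density to the modifying module $\upvarphi_*N\oplus M$ itself, not to $M$:
\begin{itemize}
\item[(i)] Density gives $(L,\theta')\in\catmodif_GS$ with $\upvarphi_*N\oplus M\in\add\upvarphi_*L$.
\item[(ii)] Your step (b), with the roles of $N$ and $L$ swapped, gives $N\in\add L$.
\item[(iii)] Hence $N\oplus L\in\add L\subseteq\catmodif S$, and $G$-maximality gives $L\in\add N$.
\item[(iv)] Therefore $M\in\add\upvarphi_*L\subseteq\add\upvarphi_*N$.
\end{itemize}
The paper (Theorem \ref{main 2}\,(1)) also applies density to $\upvarphi_*N\oplus M$, but then pulls back instead. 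It shows $\Ind(N)\oplus(\upvarphi^*M)^{**}$ is a summand of $\Ind(N')$, hence modifying. It then applies $G$-maximality to the equivariantizable module $(\upvarphi^*M)^{**}$ and concludes via Lemma \ref{direct summand lem}.

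The same overuse of density appears in your surjectivity step, but there it is harmless. From $M\in\add\upvarphi_*N$, with $M$ MM and $\upvarphi_*N$ modifying, Lemma \ref{poset mm} gives $\add M=\add\upvarphi_*N$, which is all your argument needs. Your remaining steps---(a), (b), injectivity, and the proof that $N$ is $G$-MM---agree with the paper's Theorem \ref{main 1}, Lemma \ref{varphi ind}, Corollary \ref{modif inj} and Theorem \ref{main 2}\,(2).
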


\vspace{2mm}
\subsection{Applications to three-dimensional terminal singularities of index two}~
\vspace{2mm}

Let $R$ be a Cohen--Macaulay normal domain with a canonical module. For a positive integer $r$, we say that $R$ is {\it $\Q$-Gorenstein} (resp. {\it $\Q$-Calabi--Yau}) {\it of index $r$} if 
\[
r=\min\left\{\,n>0\,\,\middle|\,\, \omega_R^{[n]}\text{ is projective (resp. free) for some canonical module $\omega_R$}\right\},
\]
where $\omega_R^{[n]}$ denotes the $n$-th reflexive power of $\omega_R$.
 If $R$ is  $\Q$-Calabi--Yau  of index $r$ and $\omega_R^{[r]}\cong R$ for a canonical module $\omega_R$, a {\it canonical cover} $S$ of $R$  is defined by 
\[
S\defeq R\oplus \omega_R\oplus \cdots\oplus \omega_R^{[r-1]},
\]
where the multiplication is induced from the reflexive products and an isomorphism $\omega_R^{[r]}\cong R$.

Assume that $\Spec R$ is a three-dimensional $\Q$-Calabi--Yau normal affine variety over $\C$ with only  terminal singularities. Then its canonical cover $S$ has only  Gorenstein terminal singularities. If $S$ is regular, then $R$ is a cyclic quotient singularity, and so the existence of an NCCR, and hence an MMA, of $R$ is known. On the other hand, if $S$ is singular, Conjecture \ref{intro conj} for $R$ is open. By Mori \cite{mor}, local models of terminal three-folds with singular canonical covers are classified into six types named
\[
cA/r,\quad cAx/2, \quad cAx/4, \quad cD/2, \quad cD/3, \quad cE/2.
\]
Here the number $r$ in each name $(-)/r$ stands for the index of $R$. As an application of Theorem \ref{intro main 3}, we prove Conjecture  \ref{intro conj}\,(2) for terminal singularities of index two. 

\begin{cor}\label{intro cor}
Let $\Spec R$ be a three-dimensional  normal affine variety over $\C$ with only terminal singularities. If the type of each singular point of $R$ is either $cA/2$, $cAx/2$, $cD/2$ or $cE/2$, then  $\widehat{R_{\m}}$ admits an MMA for every maximal ideal $\m$. 
\end{cor}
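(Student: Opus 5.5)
The plan is to reduce to the complete local situation at each singular point and then apply Theorem \ref{intro main 3} with $G=\Z/2\Z$ acting on the canonical cover. Let $\m$ be a maximal ideal. If $R_\m$ is regular, then $\widehat{R_\m}$ is itself an MMA (it is even an NCCR), so there is nothing to show. So I would assume $\m$ is a singular point of type $cA/2$, $cAx/2$, $cD/2$ or $cE/2$. Set $R'=\widehat{R_\m}$. It is a complete local $\Q$-Gorenstein ring of index $2$; since it is complete local, $\omega_{R'}^{[2]}\cong R'$, so it is $\Q$-Calabi--Yau of index $2$. Let $S=R'\oplus\omega_{R'}$ be its canonical cover. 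Then $S$ is a complete local isolated cDV singularity (Mori's description gives it explicitly as a hypersurface in $\C[[x,y,z,t]]$), and $G=\Z/2\Z$ acts on $S$ with $S^G=R'$. The action is free in codimension one (indeed away from the closed point, since terminal singularities are isolated), so it is small.

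The first step is to produce a $G$-MM $S$-module. $S$ is a complete local Gorenstein terminal singularity, so by \cite{IW14b} (or \cite{IWtits}) it admits MM modules. Among the MM $S$-modules I would take $N=\bigoplus_i N_i$, running over a maximal set of $G$-orbits, in a way that makes $N\oplus\sigma^*N$ equivariantizable. Concretely, I would take $M$ an MM module and set $N=M\oplus\sigma^*M$. This $N$ is MM because $\sigma^*$ preserves modifying and MM modules, and its additive class equals that of $\sigma^*M$. Then $\add N$ is $G$-stable, and $\upvarphi^*\upvarphi_*N\cong N\oplus\sigma^*N$ gives an equivariant module in $\add N$ with the same additive closure. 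That module is a $G$-equivariantizable MM module, hence $G$-MM. Alternatively, I would use the existence of $G$-MM modules directly via Krull--Schmidt finiteness of the relevant summands.

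The second step, which I expect to be the main obstacle, is verifying the density hypothesis of Theorem \ref{intro main 3}: every modifying $R'$-module $M$ should be of the form $\upvarphi_*L$ with $L\in\catmodif_GS$. The natural candidate is $L=(S\otimes_{R'}M)^{**}$ with its induced $G$-structure. Since the action is small, $\upvarphi_*L\cong (M\oplus (\omega_{R'}\otimes M))^{**}$, which contains $M$ as a summand, not $M$ itself. I would therefore aim to show that the essential image is closed under summands, using $\upvarphi_*$ of the $\chi$-isotypic decomposition of $G$-equivariant modules: decomposing $L$ under the two characters of $\Z/2\Z$ shows $M$ itself is $\upvarphi_*$ of an equivariant module. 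This makes density plausible in general for abelian $G$. The real content is that $L$ is modifying: $\End_S(L)\cong \End_{R'}(M\oplus (\omega\otimes M)^{**})$ restricted appropriately, and I need this to be CM. For that I would use that $\Hom(\omega\otimes M,\omega\otimes M)\cong\Hom(M,M)$ and $\Hom(M,\omega\otimes M)^{**}$ is a twist of the canonical dual, which is CM by the $\Q$-Gorenstein duality $\Hom(X,\omega)$. This uses the index-two hypothesis, where $\omega^{[2]}\cong R'$. This is exactly where index two, rather than general index, enters, and I would check depth via Auslander--Buchsbaum-type arguments in dimension three.

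With density established, Theorem \ref{intro main 3} yields a bijection $G\text{-}\MM S\simto\MM R'$. The $G$-MM module from the first step thus gives an MM $R'$-module, whose endomorphism ring is an MMA of $\widehat{R_\m}$.
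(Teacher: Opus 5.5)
\textbf{Step 1 has a genuine gap.} Your claim that $N=M\oplus\upsigma^*M$ is MM is false: a direct sum of two MM modules need not be modifying. If $M\oplus\upsigma^*M$ were modifying, maximality of $M$ would force $\add M=\add \upsigma^*M$. So you are in effect asserting that every MM $S$-module has $G$-stable additive closure, i.e.\ that a $G$-equivariantizable MM module always exists. This already fails at a $cA/2$ point. Take Example \ref{ex no nccr}: $S=\C[\![x,y,z,w]\!]/\langle xy-zw\rangle$, with $\upmu_2$ negating $x,y$ and swapping $z,w$. There the basic MM modules are $L^{[k]}\oplus L^{[k+1]}$, and $\upsigma^*L^{[k]}\cong L^{[-k]}$. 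Moreover $\Hom(L^{[a]},L^{[b]})\cong L^{[b-a]}$ is CM only when $|a-b|\leq 1$. The exponents $\{k,k+1,-k,-k-1\}$ always have spread at least $2$, so $M\oplus\upsigma^*M$ is never modifying. Indeed the only $\upmu_2$-MM module there is $S$, which is not MM. So you cannot obtain the $G$-MM module by symmetrizing an MM module.

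\textbf{What is missing.} You have to stay inside $G$-equivariantizable modifying modules and enlarge them, and then you need a reason the enlarging stops. Your fallback, ``Krull--Schmidt finiteness'', does not give this. Krull--Schmidt controls the summands of one fixed module, but there are infinitely many indecomposable modifying modules (every $L^{[k]}$ has $\End(L^{[k]})\cong S$). Nothing you have bounds the length of a strictly ascending chain in $\modif \widehat{S}$.

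The paper supplies this bound in Corollary \ref{ascending stop}. Since $\widehat{S}$ is Gorenstein terminal, it has an MM module $M$ with $\bLam_M$ Gorenstein. For any modifying $N$, the module $\Hom(M,N)$ is partial tilting by Theorem \ref{Q-CY tilt}. It completes to a tilting module $\Hom(M,L)$ with $N\in\add L$, so $|N|\leq |L|=\rk K_0=|M|$. Hence starting from $\widehat{S}$ and repeatedly enlarging within $\upmu_2$-equivariantizable modifying modules terminates after at most $|M|$ steps. The result is a $\upmu_2$-MM module, to which Theorem \ref{index two} applies.

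\textbf{Step 2.} This part is correct in substance and matches Lemma \ref{dense lem} together with Lemma \ref{pull back modif}: $\upvarphi_*\bigl((\upvarphi^*M)^{**}\bigr)\cong M\oplus (M\rp\omega_{R'})$, and index two makes $M$ Nakayama-stable. One claim there should be dropped: $M$ itself is generally not $\upvarphi_*$ of an equivariant module. For example, $M=R'$ has rank one, while $\upvarphi_*L$ has rank $2\rk_S L$. This does not matter, because density only asks for a direct summand, which you already have.
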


We also prove Conjecture \ref{intro conj 2} for arbitrary $\Q$-Gorenstein Cohen--Macaulay rings of index two. To this end, we generalize Iyama--Wemyss's derived equivalences of MMAs over three-dimensional Gorenstein normal domains to derived equivalences of Gorenstein MMAs over three-dimensional $\Q$-Gorenstein Cohen--Macaulay normal domains.

\begin{thm}\label{intro deq MMA}
Let $R$ be a three-dimensional Cohen--Macaulay equi-codimensional normal domain with a canonical module. If $R$ is $\Q$-Gorenstein,  then all Gorenstein MMAs are derived equivalent.
\end{thm}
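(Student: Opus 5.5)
We follow Iyama--Wemyss's argument for Gorenstein $R$ (\cite[Theorem 4.16]{IW14a}). The aim is to show that any two Gorenstein MMAs $\bLam_M$ and $\bLam_N$ are linked by a chain of mutations, and that each mutation is a derived equivalence given by a tilting module of projective dimension at most one. The Gorenstein hypothesis on $R$ was used there only for two facts. First, $\Hom_R(M,-)$ sends reflexive modules to modules of depth at least two. Second, the modules in question are CM, and $R$-duality together with Auslander--Reiten duality relates $\Ext^1$ between them. We will replace the Gorenstein hypothesis on $R$ by Gorensteinness of $\bLam_M$ and the $\Q$-Gorenstein property.

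The plan is as follows. Let $M,N$ be MM modules with $\bLam_M,\bLam_N$ Gorenstein. We first check that $\bLam_M$ Gorenstein (and in $\CM R$) forces $\omega_R\otimes$-twisting, i.e. $(-)^{[\omega]}=\Hom_R(\Hom_R(-,\omega_R),\omega_R)$-type operations, to preserve $\add M$. Concretely, we expect $\Hom_R(\bLam_M,\omega_R)\in\add_{\bLam_M}\bLam_M$ (the Gorenstein condition for an order in $\CM R$), which should translate into $(\omega_R\otimes M)^{**}\in\add M$. Using that $\omega_R$ is torsion in $\Cl(R)$, this gives closure of $\add M$ under all reflexive powers $\omega_R^{[i]}$.

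Next we construct $T\defeq\Hom_R(M,N)$ and prove it is a tilting $\bLam_M$-module.
\begin{itemize}
\item[(a)] $\prdim T\le 1$. Take a right $\add M$-approximation $M_0\to N$, and let $K$ be its kernel. Then $0\to\Hom(M,K)\to\Hom(M,M_0)\to T\to 0$ is exact, and $\Hom(M,K)$ is reflexive, hence projective in dimension three by depth considerations. This needs $\Hom(M,K)\in\CM R$, which we get by the standard argument of \cite[Lemma 4.?]{IW14a}: MM-ness together with $\Ext^1$-vanishing gives $K\in\add M$.
\item[(b)] $\Ext^1_{\bLam_M}(T,T)=0$.
\item[(c)] $T$ generates.
\end{itemize}

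For (b) we use the 3-dimensional duality $\Ext^1_{\bLam}(X,Y)\cong D\Ext^1_{\bLam}(Y,X\otimes\omega)$ on the CM locus. This holds for Gorenstein orders in $\CM R$ at the height-two primes, where $R_\p$ is Gorenstein after localisation; $\Q$-Gorensteinness is what lets us handle $\omega$ by twisting within $\add M$ and $\add N$. We then reduce, as in Iyama--Wemyss, to showing that $\Ext^1$ has finite length and vanishes after localisation at height-two primes. At such primes $R_\p$ is a two-dimensional normal ring, so MM modules give NCCRs there, and all NCCRs of a 2-dimensional ring coincide up to $\add$. Hence $\add M_\p=\add N_\p$, the module $T_\p$ is projective, and the $\Ext$ vanishes locally. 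Depth-lemma arguments in dimension three then kill the finite-length part.

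Finally, $\End_{\bLam_M}(T)\cong\bLam_N$ by reflexivity, since both sides are reflexive and agree in codimension one. Thus $\bLam_M$ and $\bLam_N$ are derived equivalent. In the complete local case this suffices; the general case follows by the equi-codimensional global version, because tilting can be checked locally.

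The main obstacle is step (b), the $\Ext^1$-vanishing, which relied on Gorenstein duality $\Ext^1(X,Y)\cong D\Ext^1(Y,X)$ for CM modules over a Gorenstein 3-fold. Here we must replace it by a duality twisted by $\omega_R$ and show the twist is harmless using closure of $\add M$ and $\add N$ under reflexive tensoring with $\omega_R$. This closure, in turn, is exactly where both the Gorensteinness of the MMAs and the $\Q$-Gorenstein hypothesis enter.
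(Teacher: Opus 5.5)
Your outline is the paper's: $T=\Hom(M,N)$, an $(\add M)$-approximation $0\to K\to M_0\to N$ to get $\prdim T\le 1$, localization for $\Ext^1$, generation, and $\End_{\bLam_M}(T)\cong\bLam_N$. Your observation that Gorensteinness of $\bLam_M$ gives $\add M=\add(M\cdot\omega_R^{[i]})$ is also the right ingredient. The problem is that you use it in the wrong step and leave the genuinely new work undone. In the $\Q$-Gorenstein setting the crux is step (a), not step (b).

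\textbf{Step (a).} The depth lemma only gives $\Hom(M,K)\in\CM R$; it does not give projectivity. To conclude $K\in\add M$ from maximality, you must show $\Hom(K,M)\in\CM R$ and $\End_R(K)\in\CM R$. The Iyama--Wemyss argument you cite uses that $R$ is Gorenstein for both:
\begin{itemize}
\item $\Hom(K,M)\cong\Hom(M,K)^*$, and $(-)^*$ preserves $\CM R$ only when $R$ is Gorenstein;
\item proving $\fl\Ext^1_{\bLam_M}(\Hom(M,K),\Hom(M,K))=0$ needs $\Ext^1_{\bLam_{M_\p}}(\Hom(M,N)_\p,\Hom(M,M_0)_\p)=0$ at height-two primes.
\end{itemize}
In the paper (Lemma \ref{GM lemma}) both points are replaced:
\begin{itemize}
\item Since $M\cdot\omega_R^{[r-1]}\in\add M$, we get $\Hom(M\cdot\omega_R^{[r-1]},K)\in\CM R$. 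Its canonical dual $\Hom(K,M\cdot\omega_R^{[r]})$ is CM, and it is additively equivalent to $\Hom(K,M)$ because $\omega_R^{[r]}$ is invertible.
\item At height-two primes one uses $\proj\bLam_{M_\p}=\add\omega_{\bLam_{M_\p}}$, which is Ext-injective in $\CM\bLam_{M_\p}$.
\end{itemize}
Your proposal contains neither replacement.

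\textbf{Step (b).} As written it does not work. The claim that `$R_\p$ is two-dimensional normal, so MM modules give NCCRs there' is a non-sequitur. Nothing shows $M_\p$ is MM, and a two-dimensional normal local ring has an NCCR only if it has finite CM type. Moreover $R_\p$ need not be Gorenstein, so the twisted duality you invoke is unsupported.

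None of this is needed. Once (a) gives $\prdim T\le 1$, the facts $T_\p\in\CM R_\p$ and $\bLam_{M_\p}$ Gorenstein make $T_\p$ projective by the Auslander--Buchsbaum formula \cite[Lemma 2.16]{IW14a}. So $\Ext^1_{\bLam_M}(T,T)$ has finite length, and it vanishes by \cite[Lemma 2.7]{IW14a} because $\End_{\bLam_M}(T)\cong\End_R(N)\in\CM R$.

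\textbf{Step (c).} You only assert generation, and without it $T$ is merely partial tilting. This is where Gorensteinness of $\bLam_N$ is genuinely needed.
\begin{itemize}
\item $N^*$ is MM with $\bLam_{N^*}\cong\bLam_N^{\rm op}$ Gorenstein. So Lemma \ref{GM lemma} yields $0\to N_1^*\to N_0^*\to M^*$ with $N_i\in\add N$, which stays exact under $\Hom(N^*,-)$.
\item Since $\Hom(N^*,M^*)$ is partial tilting over $\bLam_{N^*}$ by your step (a)--(b) applied to $N^*$, applying $\Hom(-,\Hom(N^*,M^*))$ preserves exactness.
\item The duality $\Hom(N^*,(-)^*)$ turns the result into $0\to\bLam_M\to\Hom(M,N_0)\to\Hom(M,N_1)\to 0$ with $\Hom(M,N_i)\in\add T$.
\end{itemize}
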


If $R$ admits an NCCR, then it is $\Q$-Gorenstein, and every NCCR is a Gorenstein MMA. Thus Theorem \ref{intro deq MMA} is also a generalization of the proof of three-dimensional noncommutative Bondal--Orlov conjecture in \cite{IW13}.
As an application of Theorem \ref{intro deq MMA},  we prove Conjecture \ref{intro conj 2} for $\Q$-Gorenstein Cohen--Macaulay rings of index two. 

\begin{cor}\label{intro nc bo}
Let $R$ be a three-dimensional Cohen--Macaulay equi-codimensional normal domain with a canonical module. If $R$ is $\Q$-Gorenstein of index two,  then all MMAs are derived equivalent.
\end{cor}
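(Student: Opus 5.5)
Since $R$ is $\Q$-Gorenstein of index two, I will reduce Corollary~\ref{intro nc bo} to Theorem~\ref{intro deq MMA}. The plan is to show that every MMA of $R$ is automatically Gorenstein, so that any two MMAs are derived equivalent by Theorem~\ref{intro deq MMA}.

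The first step is to recall the relevant criterion. For an MM (indeed any modifying) $R$-module $M$, $\bLam_M$ is Gorenstein, in the sense $\omega_{\bLam_M}\in\add\bLam_M$ as $\bLam_M$-modules, if and only if $\add(\omega_R\otimes M)^{**}=\add M$; this holds after localizing at primes, in the spirit of Iyama--Wemyss \cite{IW14a}. It therefore suffices to show $\add M=\add (\omega_R\otimes M)^{**}$ for each MM module $M$.

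The second step uses index two. Set $\omega=\omega_R$ and $M'\defeq(\omega\otimes M)^{**}$. The functor $(\omega\otimes -)^{**}$ is an autoequivalence of $\refl R$, since $\omega$ is a rank one reflexive module that is invertible in the class group. It preserves $\catmodif R$ because $\End_R(M')\cong\End_R(M)$, and it preserves MM modules because it preserves $\catmodif R$ and direct sums. Since $\omega^{[2]}$ is projective, applying the functor twice gives $(\omega^{[2]}\otimes M)^{**}$. This module lies in $\add M$ after localization, and globally we use that $\omega^{[2]}$ is a projective rank one module, so $\omega^{[2]}\otimes M\in\add M$ once we note that $\Hom(M,\omega^{[2]}\otimes M)\cong \omega^{[2]}\otimes\End(M)\in\CM R$ and apply maximality.

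The third step is to show $M\oplus M'$ is modifying. We have
\[
\Hom_R(M,M')\cong(\omega\otimes\End_R(M))^{**}\quad\text{and}\quad\Hom_R(M',M)\cong \Hom_R(\omega,\End_R(M)).
\]
The second is $\Hom_R(\omega, \bLam_M)$, which is CM since $\bLam_M\in\CM R$ and $\omega$ is the canonical module; a CM module $X$ has $\Hom(\omega,X)$ CM in the right setting, though this is the delicate point. Under index two we also have $\Hom_R(\omega,X)\cong(\omega\otimes X)^{**}\otimes(\omega^{[2]})^{-1}$, so the two conditions coincide. The key claim is therefore that $(\omega\otimes\bLam_M)^{**}\cong\Hom_R(\bLam_M,\omega)$ is CM, up to twisting by the invertible $\omega^{[2]}$. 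This holds because $\Hom_R(\bLam_M,\omega)$ is the canonical dual of the CM module $\bLam_M$, which is CM, and $\Hom(\bLam_M,\omega)\cong\Hom(\End M,\omega)\cong(\omega^{-1}\otimes\End M)^{**}$. Using $\omega^{-1}\cong\omega\otimes(\omega^{[2]})^{-1}$, this is $(\omega\otimes\End M)^{**}$ twisted by an invertible module, hence CM. So $M\oplus M'\in\catmodif R$, and maximality gives $M'\in\add M$; by symmetry, applying the argument to $M'$ gives $M\in\add M'$.

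The main obstacle is that third step: verifying that the index-two identity $\omega^{-1}\simeq\omega$ up to an invertible module converts the canonical dual into the twist $(\omega\otimes-)^{**}$, which requires the relevant Hom and reflexive-tensor manipulations. With that in hand, $\bLam_M$ is a Gorenstein MMA, and Theorem~\ref{intro deq MMA} gives the derived equivalences.
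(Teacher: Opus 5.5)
Your outline matches the paper's proof. You show that every MM module $M$ satisfies $M\oplus(M\cdot\omega_R)\in\catmodif R$ (Nakayama-stability), get $\add M=\add(M\cdot\omega_R)$ from maximality, deduce that $\bLam_M$ is Gorenstein (Lemma \ref{GM MMA}), and finish with Theorem \ref{Q-CY tilt}. You handle $\Hom_R(M\cdot\omega_R,M)\cong\omega_R^*\cdot\bLam_M\cong(\omega_R\cdot\bLam_M)\otimes_R(\omega_R^{[2]})^*$ by twisting with a rank one projective. That is a harmless variant of the paper's use of $\add M=\add(M\cdot\omega_R^{[2]})$ from Lemma \ref{add lem}.

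The gap is in the one step with real content: showing that $\Hom_R(M,M\cdot\omega_R)\cong\omega_R\cdot\bLam_M$ is CM.

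\begin{itemize}
\item You justify it by $\Hom_R(\End_R M,\omega_R)\cong(\omega_R^{-1}\otimes\End_R M)^{**}$, but this is not a valid identity. In general $\Hom_R(X,\omega_R)\cong X^*\cdot\omega_R$, whereas $(\omega_R^{-1}\otimes X)^{**}\cong\Hom_R(\omega_R,X)$; you have swapped the arguments.
\item The formula cannot be repaired for arbitrary $X\in\CM R$, even up to an invertible twist. If it held (with index two), $\omega_R\cdot X\cong\upnu(X)$ would be CM for every CM $X$. The Nakayama functor does not preserve $\CM R$ in general; for instance it already fails for $\C[\![x,y,z]\!]^{\frac12(1,1,1)}$.
\item For the same reason, your remark that $\Hom_R(\omega_R,X)$ is CM for CM $X$ ``in the right setting'' is false in general. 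The special structure of $\bLam_M$ is essential, and your argument never uses it.
\end{itemize}

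The missing idea is that $\bLam_M$ is self-dual as an $R$-module:
\[
\bLam_M^*\cong\Hom_R(M^*\cdot M,R)\cong\Hom_R(M^*,M^*)\cong\bLam_M .
\]
With this,
\[
\Hom_R(M,M\cdot\omega_R)\cong\bLam_M\cdot\omega_R\cong\bLam_M^*\cdot\omega_R\cong\Hom_R(\bLam_M,\omega_R),
\]
which is the canonical dual of a CM module and hence CM. This is exactly the computation in Lemma \ref{dense lem}.

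This step needs no index hypothesis. Index two enters only in passing from $\Hom_R(M,M\cdot\omega_R)$ to $\Hom_R(M\cdot\omega_R,M)$. Your intermediate isomorphism is also off by a twist by $\omega_R^{[2]}$ when $R$ is not $\Q$-Calabi--Yau, but that is harmless for CM-ness. Once you insert the self-duality, the rest of your argument goes through as written.
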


Using Corollary \ref{intro nc bo}, we globalize Corollary \ref{intro cor} to prove Conjecture  \ref{intro conj}\,(1) for terminal singularities of index two.

\begin{cor}
Let $\Spec R$ be a three-dimensional normal affine variety over $\C$ with only terminal singularities. If the index of $R$ is two, then $R$ admits an MMA. 
\end{cor}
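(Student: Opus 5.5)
The plan is to deduce the statement from Corollary \ref{intro nc bo} (all MMAs of $R$ are derived equivalent when $R$ is $\Q$-Gorenstein of index two) together with the local existence result Corollary \ref{intro cor}. The goal is to build a modifying $R$-module $M$ such that $\widehat{M_{\m}}$ is an MM $\widehat{R_{\m}}$-module for every maximal ideal $\m$. By the Remark following Conjecture \ref{intro conj}, such an $M$ is an MM $R$-module, and then $\bLam_M$ is an MMA of $R$.

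\textbf{Step 1: reduction to finitely many points.} Terminal threefolds have isolated singularities, so $\operatorname{Sing}R=\{\m_1,\dots,\m_n\}$ is finite. At a smooth maximal ideal $\m$, any modifying module is free locally, since $\bLam$ is CM over a regular local ring of dimension $3$ and hence $M_\m$ is projective. Thus $\add \widehat{M_\m}=\add \widehat{R_\m}$, which is MM (indeed CT). At each $\m_i$, the index-two hypothesis restricts the Mori type to $cA/2$, $cAx/2$, $cD/2$ or $cE/2$; points of index one are Gorenstein terminal, where \cite{IW14b} applies. Hence Corollary \ref{intro cor} (or \cite{IW14b}) gives an MM $\widehat{R_{\m_i}}$-module $N_i$.

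\textbf{Step 2: globalization.} We want a reflexive $R$-module $M$ with $\add\widehat{M_{\m_i}}=\add N_i$ for each $i$ and $M$ free elsewhere. Since $R$ has isolated singularities, reflexive modules on the punctured spectrum glue along the complement of finitely many points. The obstacle is that a complete-local module need not be the completion of an $R$-module. This is where derived equivalence enters. Locally, Corollary \ref{intro nc bo} applied to $\widehat{R_{\m_i}}$ shows that all its MMAs are derived equivalent; by the Iyama--Wemyss mutation theory used in its proof, they are connected by mutations, which permute summands up to additive equivalence and preserve the MM property. So it suffices to realize some MM module at each point algebraically. For this I would first try the construction behind Theorem \ref{intro main 3}: pass to the canonical cover $S=R\oplus\omega_R$, which is a global $\Z/2$-cover since $\omega_R^{[2]}$ is invertible. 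After localizing finely enough (or via an étale-local argument) we may assume $\omega_R^{[2]}\cong R$. The cover $S$ is Gorenstein terminal, and \cite{IW14b} gives MM modules of $S$ defined globally. By Corollary \ref{intro cor}, the $G$-MM modules of $\widehat{S}$ push forward to MM modules of $\widehat R$.

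\textbf{Step 3: conclusion.} Take a global modifying $R$-module $M_0$ containing $R$ whose completions at each $\m_i$ are modifying, for example $\upvarphi_*$ of a $G$-equivariant modifying $S$-module. At each $\m_i$, complete $\widehat{M_{0,\m_i}}$ to an MM module. Using that MM modules over $\widehat{R_{\m_i}}$ are mutation-connected with all MMAs derived equivalent (Corollary \ref{intro nc bo}), mutations can be performed globally. Mutation is defined via approximations, and these commute with completion and only change $M$ at $\m_i$. This yields a global $M$ that is MM at each $\m_i$, and the Remark then finishes the proof.

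The main obstacle is Step 2: descending local MM modules to a single global $R$-module. I expect the approach to go through the observation that global mutations of $M_0$ complete to local mutations, combined with the fact that derived equivalence (Corollary \ref{intro nc bo}) makes all local MMAs reachable, and in particular that they all have the same number of summands.
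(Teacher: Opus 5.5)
Your Steps 2--3 have a genuine gap. It is exactly the algebraization problem the paper flags in the Remark after Conjecture~\ref{intro conj}: nothing you invoke produces a single $R$-module whose completions at the $\m_i$ are MM.

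\begin{itemize}
\item Corollary~\ref{intro nc bo} is proved via the tilting modules $\Hom(M,N)$ of Theorem~\ref{Q-CY tilt}, not via mutation. Mutation-connectedness of MM modules is not established anywhere here.
\item Even granting it, a mutation only exchanges summands of a module you already have. It cannot turn the non-maximal $\widehat{M_{0,\m_i}}$ into an MM module. The summands you would add when ``completing'' $\widehat{M_{0,\m_i}}$ locally are precisely the ones that need not be completions of $R$-modules.
\item The canonical-cover detour does not close the gap. Trivializing $\omega_R^{[2]}$ by localizing replaces $R$ by a different ring. The global MM $S$-modules from \cite{IW14b} need not be $\upmu_2$-equivariantizable (cf.\ Example~\ref{ex no nccr}). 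The push-forward of $\upmu_2$-MM modules (Theorem~\ref{index two}, not Corollary~\ref{intro cor}) again lives only over $\widehat{R_{\m_i}}$.
\item Your target is also stronger than necessary. By the same Remark, a global MM module need not have MM completions, and the paper never produces one that does.
\end{itemize}

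The missing idea is to stay global and prove that ascending chains in $\modif R$ terminate. Start from $R\in\catmodif R$. If $N$ is not MM, then by definition there is a global $L$ with $N<N\oplus L$ in $\modif R$. A maximal element is MM by Lemma~\ref{poset mm}, so it suffices to bound the length of chains $N^1<\cdots<N^{\ell}$.

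By \cite[Proposition 2.26]{IW14a}, each strict step stays strict after completing at some prime, and that prime is necessarily a singular point $\m_k$. At each $\m_k$ there is a Gorenstein MMA $\bLam_{M^k}$:
\begin{itemize}
\item when the canonical cover is regular, it is an NCCR (your Step~1 overlooks this case, since e.g.\ $\frac12(1,1,1)$ is none of the four listed types);
\item otherwise Corollary~\ref{MM ex local} gives an MMA, which is Gorenstein by Corollary~\ref{goren mma}, and automatically so at Gorenstein points.
\end{itemize}

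Corollary~\ref{ascending stop} then gives $|N|\le|M^k|$ for every modifying $\widehat{R_{\m_k}}$-module $N$. Indeed, $\Hom(M^k,N)$ is partial tilting by Theorem~\ref{Q-CY tilt}\,(1). By \cite[Lemma 4.13]{IW14a} it extends to a tilting module $\Hom(M^k,L)$ with $N\in\add L$, and $|L|$ equals the rank of $K_0$, which is a derived invariant. This yields $\ell\le 1+\sum_k(|M^k|-1)$, which is Proposition~\ref{local to global}.

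Your observation that all local MMAs have the same number of summands is the right invariant. It has to be used to bound global chains, not to realize local MM modules globally.
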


\vspace{2mm}
\subsection{Notation}~
\vspace{2mm}

Let $A$ and $B$ be a noetherian (not necessarily commutative) rings. We denote by $\Mod A$ (resp. $\fmod A$) the category of (resp. finitely generated) right $A$-modules. 
If $f\colon A\to B$ is a ring homomorphism, there are natural functors
\begin{align*}
f^{*}&\colon \Mod A\to \Mod B;\quad M\to M\otimes_AB\\
f_*&\colon \Mod B\to \Mod A;\quad N\to N_f,
\end{align*}
where $N_f$ is the abelian group $N$ together with the $A$-action given by $n\cdot a\defeq n\cdot f(a)$. Note that for another ring homomorphism $g\colon B\to C$, there are natural functor isomorphisms $(g\circ f)^*\cong g^*\circ f^*$ and $(g\circ f)_*\cong f_*\circ g_*$. 

\vspace{2mm}
\subsection{Acknowledgments}~
\vspace{2mm}

The author would like to thank Michael Wemyss and Osamu Iyama for giving valuable comments. The author was partially supported by JSPS Grant-in-Aid for Scientific Research (C) (26K06758). 

\vspace{2mm}
\subsection{Use of AI}~
\vspace{2mm}

The author used ChatGPT 6  only for searching references and finding typos and errors in the first version of this paper. 

\section{Equivariant modules over rings with finite group actions}

Throughout this section, $G$ is a finite group, and 
$\Lambda$ is a noetherian (not necessarily commutative) ring with a left $G$-action
\[
\upsigma\colon G\to \Aut(\Lambda);\quad g\mapsto \upsigma_g,
\] 
where $\Aut(\Lambda)$ is the group of ring automorphisms of $\Lambda$. We also assume that $|G|^{-1}\in\Lambda$. 

\vspace{2mm}
\subsection{Equivariant modules and modules with actions}~
\vspace{2mm}

We begin with the definition of equivariant modules. 
\begin{dfn}
A {\it $G$-equivariant} right $\Lambda$-module is a right $\Lambda$-module $N$ together with a family of  $\Lambda$-module isomorphisms 
\[
\theta=\left\{\theta_g\colon \upsigma_g^*N\simto N\right\}_{g\in G}
\] 
such that the cocycle condition $\theta_{gh}=\theta_g\circ\upsigma_g^*(\theta_h)$ holds for each $ g,h\in G$. In particular, $\theta_{1_G}=\id_N$ and $\theta^{-1}_g=\upsigma_{g}^*(\theta_{g^{-1}})$. We call the family $\theta$  a {\it $G$-equivariant structure} on $N$. 
\end{dfn}

\begin{nota}
For $g\in G$ and $N\in \fmod\Lambda$, we sometimes write $N\otimes_{\upsigma_g}\Lambda\defeq\upsigma_g^*N$, and write $x\otimes_{\upsigma_g}\lambda$, or more simply $x\otimes_g\lambda$,  for the element $x\otimes\lambda\in \upsigma^*_gN$. By definition, we have 
$x\lambda\otimes_g\lambda'=x\otimes_g\upsigma_g(\lambda)\lambda'$ for $x\in N$ and  $\lambda,\lambda'\in \Lambda$. 
\end{nota}

\begin{ex}
The ring $\Lambda$ has a canonical $G$-equivariant structure $\theta$ given by 
\[
\theta_g\colon \upsigma_g^*\Lambda\simto \Lambda;\quad \lambda\otimes1\mapsto \upsigma_g(\lambda).
\]
Indeed,  the equalities 
\[
\theta_g((\lambda\otimes_g1)\lambda')=\theta_g(\lambda\upsigma_g^{-1}(\lambda')\otimes_g1)=\upsigma_g(\lambda\upsigma_g^{-1}(\lambda'))=\theta_g(\lambda\otimes_g1)\lambda'
\]
show that $\theta_g$ is $\Lambda$-linear. 
Moreover, by  the equalities
\begin{align*}
(\theta_g\circ\upsigma_g^*(\theta_h))(\lambda\otimes_{{gh}} 1)&=\theta_g\bigl(\upsigma_g^*(\theta_h)((\lambda\otimes_{{h}} 1)\otimes_{g}1)\bigr)\\
&=\theta_g(\upsigma_h(\lambda)\otimes_{g}1)\\
&=\upsigma_{gh}(\lambda)=\theta_{gh}(\lambda\otimes_{{gh}} 1),
\end{align*}
 the cocycle condition  holds. 
\end{ex}

A {\it morphism} $\varphi\colon (N,\theta)\to (N',\theta')$ between $G$-equivariant right $\Lambda$-modules is a morphism $\varphi\colon N\to N'$ in $\fmod \Lambda$ such that $\varphi\circ \theta_g=\theta'_g\circ\upsigma^*_g(\varphi)$ for all $g\in G$. We denote by 
\[
\fmod_G\Lambda \quad({\rm resp.} \quad\proj_G\Lambda)
\]
the category of $G$-equivariant  finitely generated  (resp.  projective) right $\Lambda$-modules.

For $G$-equivariant $\Lambda$-modules $(N,\theta)$ and $(N',\theta')$, there is an  induced  left $G$-action on the abelian group $\Hom_{\Lambda}(N,N')$ given by 
\begin{equation}\label{left action}
g\cdot \varphi\defeq \theta'_g\circ \upsigma_g^*(\varphi)\circ \theta^{-1}_g.
\end{equation}
By definition, we have
\[
\Hom_{\fmod_G\Lambda}\bigl((N,\theta),(N',\theta')\bigr)=\Hom_{\Lambda}(N,N')^G.
\]

\begin{dfn}
 A {\it $G$-action} on $N\in\fmod\Lambda$ is a family of additive bijections
 \[
 \eta=\{\eta_g\colon N\simto N\}
 \]
 such that $\eta_{gh}=\eta_g\circ\eta_h$ and $\eta_g(x\lambda)=\eta_g(x)\upsigma_g(\lambda)$ for all $g,h\in G$, $x\in N$ and $\lambda\in\Lambda$.
\end{dfn}

A {\it morphism} $\varphi\colon (N,\eta)\to (N',\eta')$ between $\Lambda$-modules with $G$-actions is a morphism $\varphi\colon N\to N'$ in $\fmod\Lambda$ such that $\eta'_g\circ\varphi=\varphi\circ\eta_g$ for all $g\in G$. We denote by 
\[
\fmod(G,\Lambda)
\]
the category of finitely generated right $\Lambda$-modules with $G$-actions. 

\begin{ex}\label{ex theta eta}
(1) Let $(N,\theta)\in \fmod_G\Lambda$. For every $g\in G$, we define a bijection
\[
\eta^{\theta}_g\colon N\simto N;\quad x\mapsto \theta_{g}(x\otimes_g 1).
\]
Then  the cocycle condition of $\theta$ implies $\eta^{\theta}_{gh}=\eta^{\theta}_g\circ\eta^{\theta}_h$ for all $g,h\in G$. Moreover,
\[
\eta^{\theta}_g(x\lambda)=\theta_{g}(x\lambda\otimes_g1)=\theta_{g}(x\otimes_g\upsigma_{g}(\lambda))=\theta_{g}((x\otimes_g1)\upsigma_{g}(\lambda))=\eta^{\theta}_g(x)\upsigma_{g}(\lambda).
\]
 Hence, $\Phi(N,\theta)\defeq(N,\eta^{\theta})\in \fmod(G,\Lambda)$. \\
(2) Conversely, $(N,\eta)\in \fmod(G,\Lambda)$ induces a bijection 
\[
\theta^{\eta}_g\colon \upsigma^*_gN\simto N;\quad x\otimes_g1\mapsto \eta_{g}(x)
\]
for each $g\in G$.
Then we have 
\begin{align*}
\theta^{\eta}_g((x\otimes_g 1)\lambda)
&=\theta^{\eta}_g(x\otimes_g \lambda)
=\theta^{\eta}_g(x\upsigma^{-1}_g(\lambda)\otimes_g 1)
=\eta_{g}(x\upsigma^{-1}_g(\lambda))\\
&=\eta_{g}(x)\upsigma_{g}(\upsigma^{-1}_g(\lambda))=\theta_g^{\eta}(x\otimes_g1)\lambda.
\end{align*}
Thus $\theta^{\eta}_g$ is $\Lambda$-linear. Furthermore, the equalities
\begin{align*}
(\theta^{\eta}_g\circ\upsigma^*_g(\theta^{\eta}_h))(x\otimes_{{gh}}1)
=\theta^{\eta}_g(\eta_{h}(x)\otimes_{g}1)
=\eta_{g}(\eta_{h}(x))
=\theta^{\eta}_{gh}(x\otimes_{{gh}}1)
\end{align*}
show the cocycle condition of $\theta^{\eta}$. Therefore, $\Psi(N,\eta)\defeq(N,\theta^{\eta})\in\fmod_G\Lambda$. 
\end{ex}

 The constructions in Example \ref{ex theta eta} define functors 
 \[
 \Phi\colon\fmod_G\Lambda\to \fmod(G,\Lambda)\quad {\rm and}\quad \Psi\colon \fmod(G,\Lambda)\to \fmod_G\Lambda,
 \]
 and it is easy to check that $\Phi\circ \Psi\cong \id$ and $\Psi\circ\Phi\cong \id$. Thus we obtain the following.
 
\begin{prop}
The functors 
\begin{align*}
\Phi\colon&\fmod_G\Lambda\to \fmod(G,\Lambda); \quad (N,\theta)\mapsto (N,\eta^{\theta})\\
\Psi\colon&\fmod(G,\Lambda)\to \fmod_G\Lambda; \quad (N,\eta)\mapsto(N,\theta^{\eta})
\end{align*}
are exact equivalences and mutually inverse. 
\end{prop}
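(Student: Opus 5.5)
The plan is first to make $\Phi$ and $\Psi$ into functors by specifying their action on morphisms, and then to check that the two composites are the identity on the nose. On morphisms both functors should act as the identity on underlying $\Lambda$-linear maps. So the first step is to show that a $\Lambda$-linear map $\varphi\colon N\to N'$ satisfies $\varphi\circ\theta_g=\theta'_g\circ\upsigma_g^*(\varphi)$ for all $g$ if and only if $\varphi\circ\eta^{\theta}_g=\eta^{\theta'}_g\circ\varphi$ for all $g$. To see this, evaluate both sides of the first identity on generators $x\otimes_g 1$ of $\upsigma_g^*N$, using $\upsigma_g^*(\varphi)(x\otimes_g1)=\varphi(x)\otimes_g1$. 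This gives exactly $\varphi(\eta^\theta_g(x))=\eta^{\theta'}_g(\varphi(x))$. Conversely, the second identity yields the first on these generators, and both sides of the first are $\Lambda$-linear, so they agree everywhere. Thus $\Phi$ and $\Psi$ are well-defined functors, fully faithful, and bijective on Hom-sets.

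Next, I check that the composites are the identity on objects. Example \ref{ex theta eta} already shows that $\Phi(N,\theta)$ and $\Psi(N,\eta)$ lie in the correct categories. For $\Psi\Phi$, we have
\[
\theta^{\eta^\theta}_g(x\otimes_g1)=\eta^\theta_g(x)=\theta_g(x\otimes_g1),
\]
and since both maps are $\Lambda$-linear and agree on the generators $x\otimes_g 1$, we get $\theta^{\eta^\theta}=\theta$. Similarly, $\eta^{\theta^\eta}_g(x)=\theta^\eta_g(x\otimes_g1)=\eta_g(x)$. Hence $\Psi\circ\Phi=\id$ and $\Phi\circ\Psi=\id$ strictly, which in particular gives the isomorphisms claimed.

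It remains to show exactness. Both categories are abelian, with kernels and cokernels computed on underlying $\Lambda$-modules. In $\fmod(G,\Lambda)$, a kernel or cokernel inherits the $G$-action because morphisms commute with the $\eta_g$. In $\fmod_G\Lambda$, one uses that $\upsigma_g^*=-\otimes_{\upsigma_g}\Lambda$ is exact, since $\upsigma_g$ is an automorphism and $\Lambda_{\upsigma_g}$ is therefore a free module of rank one. Since $\Phi$ and $\Psi$ do not change underlying modules or maps, they send short exact sequences to short exact sequences. An equivalence of abelian categories is exact in any case, so this step is a formality.

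The only point needing care is the recurring fact that $\Lambda$-linear maps out of $\upsigma_g^*N$ are determined by their values on elements $x\otimes_g1$. This holds because $x\otimes_g\lambda=(x\otimes_g1)\lambda$, so these elements generate $\upsigma_g^*N$. Everything else is the computation already done in Example \ref{ex theta eta}.
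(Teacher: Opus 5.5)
Your proposal is correct and takes the same route as the paper, which only remarks that the constructions of Example \ref{ex theta eta} are functorial and that $\Phi\circ\Psi\cong\id$ and $\Psi\circ\Phi\cong\id$. You supply the omitted checks, and all of them are sound: that the morphism conditions $\varphi\circ\theta_g=\theta'_g\circ\upsigma_g^*(\varphi)$ and $\varphi\circ\eta^{\theta}_g=\eta^{\theta'}_g\circ\varphi$ are equivalent, that the two composites are even strictly the identity, and that exactness is automatic because underlying modules and maps are unchanged.
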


The following is an easy observation. 

\begin{lem}\label{sigma g h}
Let $N\in\fmod\Lambda$, and let $\varphi\colon N\to N$ be an additive map. Then, the map
\[
\upsigma_g^*N\to \upsigma_h^*N;\quad x\otimes_g 1\mapsto \varphi(x)\otimes_h1
\]
is $\Lambda$-linear if and only if $\varphi(x\lambda)=\varphi(x)\upsigma_{h^{-1}g}(\lambda)$ holds for each $x\in N$ and $\lambda\in \Lambda$.
\end{lem}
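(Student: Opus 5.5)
We need to prove Lemma \ref{sigma g h}: the map $\psi\colon \upsigma_g^*N\to\upsigma_h^*N$, $x\otimes_g 1\mapsto \varphi(x)\otimes_h 1$, is $\Lambda$-linear if and only if $\varphi(x\lambda)=\varphi(x)\upsigma_{h^{-1}g}(\lambda)$ for all $x\in N$ and $\lambda\in\Lambda$. The plan is a direct computation with the tensor relation $x\lambda\otimes_g\lambda'=x\otimes_g\upsigma_g(\lambda)\lambda'$ from the Notation preceding the lemma.

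The first step is to make sense of the map. Since $\upsigma_g^*N=N\otimes_{\upsigma_g}\Lambda$ and $\upsigma_g$ is an automorphism, every element of $\upsigma_g^*N$ can be written uniquely as $x\otimes_g 1$. Indeed, $x\otimes_g\lambda=x\upsigma_g^{-1}(\lambda)\otimes_g 1$, and the map $N\to\upsigma_g^*N$, $x\mapsto x\otimes_g 1$, is an additive bijection. So the assignment is a well-defined additive map $\psi$, and the only question is linearity.

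The second step computes both sides of the linearity condition on a generator. For $\lambda\in\Lambda$ we have
\[
(x\otimes_g 1)\lambda=x\otimes_g\lambda=x\upsigma_g^{-1}(\lambda)\otimes_g 1 .
\]
Applying $\psi$ gives $\varphi(x\upsigma_g^{-1}(\lambda))\otimes_h 1$. On the other hand,
\[
\psi(x\otimes_g1)\lambda=\varphi(x)\otimes_h\lambda=\varphi(x)\upsigma_h^{-1}(\lambda)\otimes_h 1 .
\]
Because $y\mapsto y\otimes_h 1$ is injective, $\psi$ is $\Lambda$-linear if and only if
\[
\varphi(x\upsigma_g^{-1}(\lambda))=\varphi(x)\upsigma_h^{-1}(\lambda)\quad\text{for all } x\in N,\ \lambda\in\Lambda.
\]

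The third step reparametrizes. Substitute $\lambda=\upsigma_g(\mu)$; this ranges over all of $\Lambda$ as $\mu$ does, since $\upsigma_g$ is bijective. The condition becomes $\varphi(x\mu)=\varphi(x)\upsigma_{h^{-1}}\upsigma_g(\mu)=\varphi(x)\upsigma_{h^{-1}g}(\mu)$, using that $\upsigma$ is a group homomorphism. This gives both directions of the equivalence at once.

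The only point needing care is the bookkeeping of $\upsigma_g$ versus $\upsigma_g^{-1}$ in the right module structure on $\upsigma_g^*N$; there is no real obstacle.
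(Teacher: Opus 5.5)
Your proof is correct. The paper gives no proof and calls the lemma ``an easy observation''; your direct computation with the identity $x\otimes_g\lambda=x\upsigma_g^{-1}(\lambda)\otimes_g1$, followed by the substitution $\lambda=\upsigma_g(\mu)$, is the natural verification, and it is the same manipulation the paper uses in Example~\ref{ex theta eta}\,(2).
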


\vspace{2mm}
\subsection{Restriction, induction and invariant parts}~
\vspace{2mm}

There is a natural functor, called  the {\it restriction functor}, given by 
\[
\Res\colon \fmod_G\Lambda\to \fmod\Lambda;\quad (N,\theta)\mapsto N. 
\]
We also denote by $\Res\colon \fmod(G,\Lambda)\to \fmod\Lambda$ the corresponding restriction functor. A $\Lambda$-module is said to be {\it $G$-equivariantizable} if it lies in the essential image of the restriction functor $\Res\colon \fmod_G\Lambda\to \fmod\Lambda$.

For $N\in\fmod\Lambda$, we define 
\[
\Ind(N)\defeq\Bigl(\bigoplus_{g\in G}\upsigma^*_g(N), \theta\Bigr)\in \fmod_G\Lambda,
\]
where $\theta_g\colon \upsigma^*_g\bigl(\bigoplus_{h\in G}\upsigma^*_h(N)\bigr)\simto\bigoplus_{h\in G}\upsigma^*_h(N)$ is induced from natural isomorphisms 
\[
\upsigma^*_g(\upsigma^*_h(N))\simto \upsigma^*_{gh}(N).
\]
Then this defines a functor 
\[
\Ind\colon \fmod\Lambda\to\fmod_G\Lambda.
\]
 We also write $\Ind\colon \fmod\Lambda\to\fmod(G,\Lambda)$ for   $\Phi\circ\Ind$. The following is standard.

\begin{prop}[{\cite[Lemma 3.8]{Ela}}]
There are adjunctions
\[
\Res\dashv\Ind\dashv\Res.
\]
In particular, the functors $\Res$ and $\Ind$ are both exact. 
\end{prop}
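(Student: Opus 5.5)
We construct the unit and counit of each adjunction explicitly and verify the triangle identities. Exactness then follows formally. Throughout we work in $\fmod_G\Lambda$ via the equivalence $\Phi$, so a $G$-equivariant module is a $\Lambda$-module with a semilinear $G$-action $\eta$. In this language $\Ind(N)=\bigoplus_{h\in G}\upsigma_h^*N$, and $g$ carries the summand indexed by $h$ to the summand indexed by $gh$ via the canonical isomorphism $\upsigma_g^*\upsigma_h^*N\cong\upsigma_{gh}^*N$. Concretely, $\eta_g(x\otimes_h 1)=x\otimes_{gh}1$ in the $gh$-component; this is semilinear by Lemma \ref{sigma g h}.

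\emph{Adjunction $\Res\dashv\Ind$.} Given $(M,\eta)\in\fmod(G,\Lambda)$ and $N\in\fmod\Lambda$, we construct a natural bijection $\Hom_\Lambda(M,N)\cong\Hom_{\fmod(G,\Lambda)}((M,\eta),\Ind N)$.
\begin{itemize}
\item Forward map: a $\Lambda$-linear $f\colon M\to N$ goes to $\tilde f(m)=\sum_h f(\eta_{h^{-1}}(m))\otimes_h 1$. The $h$-component is $\Lambda$-linear because $\eta_{h^{-1}}$ is $\upsigma_{h^{-1}}$-semilinear, which is exactly compensated by the twist in $\upsigma_h^*N$; again Lemma \ref{sigma g h} gives this.
\item $G$-equivariance of $\tilde f$ is the reindexing $h\mapsto gh$.
\item Inverse map: compose with the projection onto the $1_G$-component.
\end{itemize}
The two assignments are mutually inverse, since an equivariant map into $\Ind N$ is determined by its $1_G$-component.

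\emph{Adjunction $\Ind\dashv\Res$.} Here we need a natural bijection $\Hom_{\fmod(G,\Lambda)}(\Ind N,(M,\eta))\cong\Hom_\Lambda(N,M)$.
\begin{itemize}
\item Forward map: restrict an equivariant map to the summand $\upsigma_1^*N=N$.
\item Inverse map: $f$ goes to the map sending $x\otimes_h1$ in the $h$-summand to $\eta_h(f(x))$.
\item Checks: this map is $\Lambda$-linear on $\upsigma_h^*N$ by the semilinearity of $\eta_h$. It is equivariant because $\eta_g\eta_h=\eta_{gh}$.
\end{itemize}

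Once both adjunctions hold, exactness is formal. $\Res$ is visibly exact, since kernels and cokernels in $\fmod_G\Lambda$ are computed on underlying modules, and it is also a left and a right adjoint. $\Ind$ is both a left and a right adjoint, so it preserves finite limits and colimits and is therefore exact between these abelian categories. Alternatively, exactness of $\Ind$ is direct: $\upsigma_g^*$ is an equivalence with inverse $\upsigma_{g^{-1}}^*$, and finite sums are exact.

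Note that $|G|^{-1}\in\Lambda$ is not needed for any of this. The only delicate point is keeping the twisted tensor notation consistent, i.e. checking the linearity conditions via Lemma \ref{sigma g h} with the correct $h^{-1}g$ index.
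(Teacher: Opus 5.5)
Your proof is correct. The paper gives no argument of its own; it cites Elagin's Lemma~3.8, which proves the biadjunction for equivariant objects in a general category with a $G$-action. There $\Ind=\bigoplus_{g}\upsigma_g^*$, with equivariant structure coming from the isomorphisms $\upsigma_g^*\upsigma_h^*\cong\upsigma_{gh}^*$. You instead verify both Hom-bijections by hand in the semilinear model $\fmod(G,\Lambda)$.

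What each approach buys:
\begin{itemize}
\item The citation gives generality: the same statement holds for sheaves, complexes, and so on. It also avoids the twisted-tensor bookkeeping.
\item Your version is self-contained and shows plainly that $|G|^{-1}\in\Lambda$ plays no role in the adjunctions.
\item Your version also makes the adjunction maps explicit, which connects to later material. Since $\Hom_{\fmod(G,\Lambda)}((\Lambda,\upsigma),X)=X^G$, your formula $\tilde f(m)=\sum_h f(\eta_{h^{-1}}(m))\otimes_h 1$ applied to $f=(\lambda\mapsto x\lambda)\colon\Lambda\to N$ gives exactly the diagonal map $x\mapsto\sum_h x\otimes_h 1$ of Lemma~\ref{ind inv}.
\end{itemize}

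A few small presentational points:
\begin{itemize}
\item Lemma~\ref{sigma g h} does not literally cover the maps you apply it to. It concerns $\Lambda$-linear maps $\upsigma_g^*N\to\upsigma_h^*N$ of a single module, whereas you need $\Lambda$-linearity of maps $M\to\upsigma_h^*N$ and semilinearity of $\eta_g$ on $\Ind N$. The same one-line computation, using $(x\otimes_h 1)\lambda=x\upsigma_h^{-1}(\lambda)\otimes_h 1$, does the job.
\item For $\Ind\dashv\Res$, state why your two maps are mutually inverse. An equivariant $u\colon\Ind N\to M$ satisfies $u(x\otimes_h 1)=\eta_h(u(x\otimes_1 1))$, so it is determined by its restriction to the $1_G$-summand.
\item The equivalence $\Phi$ takes you to $\fmod(G,\Lambda)$, not to $\fmod_G\Lambda$.
\end{itemize}
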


Consider the $G$-invariant subring $\Lambda^G\defeq\{\lambda\in\Lambda\,|\, \upsigma_g(\lambda)=\lambda \mbox{ for all }g\in G\}\subseteq\Lambda$. Similarly, for $(N,\eta)\in \fmod(G,\Lambda)$, set
$
N^G\defeq\{x\in N\,|\, \eta_g(x)=x \mbox{ for all }g\in G\}.
$
Then  this defines a functor 
\[
(-)^G\colon \fmod(G,\Lambda)\to \fmod\Lambda^{G}.
\]
We also write $(-)^G\colon \fmod_G\Lambda\to\fmod\Lambda^G$ for the corresponding functor.

Assume that $|G|^{-1}\in\Lambda$. For $(N,\eta)\in\fmod(G,\Lambda)$, consider the Reynolds operator 
\[
\rho_N\colon N\to N^G;\quad x\mapsto \frac{1}{|G|}\sum_{g\in G}\eta_g(x).
\]
Denote by $N_G\defeq \Ker(\rho_N)\in\fmod\Lambda^G$.

\begin{prop}\label{inv exact}
Assume that $|G|^{-1}\in\Lambda$. 
\begin{itemize}
\item[$(1)$] For any $(N,\eta)\in\fmod(G,\Lambda)$, there is a decomposition $N=N^{G}\oplus N_G$ in $\fmod \Lambda^G$.
\item[$(2)$] For any $\varphi\colon (N,\eta)\to (N',\eta')$ in $\fmod(G,\Lambda)$, there is a decomposition 
\[
\varphi=\varphi^G\oplus \varphi_G,
\]
where $\varphi^G\defeq \varphi|_{N^G}\colon N^G\to N'^G$ and $\varphi_G\defeq \varphi|_{N_G}\colon N_G\to N'_G$.
\item[$(3)$] The functor $(-)^G\colon \fmod(G,\Lambda)\to \fmod\Lambda^{G}$ is exact. 
 \end{itemize}
\begin{proof}
(1) This follows from  $\rho_N\circ\iota_N=\id_{N^G}$, where $\iota_N\colon N^G\hookto N$ is the natural inclusion. \\
(2) It is enough to show that $\Im(\varphi^G)\subseteq N'^G$ and $\Im(\varphi_G)\subseteq N'_G$. If $x\in N^G$, then 
\[\eta'_g(\varphi(x))=\varphi(\eta_g(x))=\varphi(x)\]
for each $g\in G$. Thus $\varphi(x)\in N'^G$. 
Let $y\in N_G$. Then 
\[
\rho_{N'}(\varphi(y))=\frac{1}{|G|}\sum_{g\in G}\eta'_g(\varphi(y))=\frac{1}{|G|}\sum_{g\in G}\varphi(\eta_g(y))=\varphi(\rho_N(y))=0.
\]
Hence $\varphi(y)\in N'_G$. \\
(3) By (2), for any $\varphi\colon (N,\eta)\to (N',\eta')$, there are equalities
\[
\Ker(\varphi)=\Ker(\varphi^G)\oplus\Ker(\varphi_G) \quad \text{and}\quad \Im(\varphi)=\Im(\varphi^G)\oplus\Im(\varphi_G).
\]
In particular, the functor $(-)^G\colon \fmod(G,\Lambda)\to \fmod\Lambda^{G}$ is exact.
\end{proof}
\end{prop}

\begin{lem}\label{ind inv}
Let $N\in\fmod\Lambda$, and denote by $\upvarphi\colon \Lambda^G\hookto \Lambda$ a natural injection. Then there is a $\Lambda^G$-module isomorphism
\[
\upvarphi_*N\cong \Ind(N)^G.
\]
\begin{proof}
Write $\Ind(N)=(\bigoplus_{g\in G}\upsigma^*_gN,\eta)$. Then, for each $g\in G$, the bijection $\eta_g$ is given by 
\[
\eta_g\colon\bigoplus_{h\in G}\upsigma^*_hN\simto\bigoplus_{h\in G}\upsigma_h^*N;\quad\sum_{h\in G}(x_h\otimes_{h}1)\mapsto \sum_{h\in G}x_h\otimes_{{gh}}1.
\]
Thus $\eta_g(\sum_{h\in G}(x_h\otimes_{h}1))=\sum_{h\in G}(x_h\otimes_{h}1)$ if and only if $x_h=x_{gh}$ for all $h\in G$. Therefore, we have
\[
\Ind(N)^G=\left\{\sum_{g\in G}(x_g\otimes_{g}1)\,\middle|\, x_g=x_h \mbox{ for all }g,h\in G\right\},
\]
and so the diagonal map $\delta\colon N\to \Ind(N)^G;x\mapsto \sum_{g\in G}(x\otimes_g1)$ is an additive bijection.  Furthermore,  $\delta$ is $\Lambda^G$-linear. This finishes the proof.
\end{proof}
\end{lem}

\vspace{2mm}
\subsection{Skew group rings}~
\vspace{2mm}

Equivariant modules also correspond to modules over a skew group ring. The {\it skew group ring} 
\[
\Lambda\#G\defeq\bigoplus_{g\in G}\Lambda t_g
\]
 of $G$ over $\Lambda$ is a free $\Lambda$-module  with basis $\{t_g\}_{g\in G}$  and the multiplication is given by
\[
(\lambda t_g)(\lambda't_h)\defeq \lambda\upsigma_g(\lambda')t_{gh}.
\]
As usual, we write $1_{\Lambda}t_g=t_g$. Then, $t_gt_h=t_{gh}$ and $t_1=1_{\Lambda\#G}$. There is a natural ring homomorphism 
\[
\Lambda\to \Lambda\#G;\quad \lambda\mapsto \lambda t_1.
\]
\begin{ex}\label{skew action}
(1) Let $(N,\eta)\in\fmod(G,\Lambda)$. Then $N$ can be considered as a right $\Lambda\#G$-module by
\[
N\times \Lambda\#G\to N;\quad (x, \lambda t_g)\mapsto\eta_{g^{-1}}(x\lambda).
\]
We denote  this right $\Lambda\#G$-module by $(N,\eta)^{\#}$. \\
(2) Conversely, let $N\in \fmod\Lambda\#G$. Then $N$ can be considered as a right $\Lambda$-module by $x\lambda\defeq x(\lambda t_1)$. Furthermore, for each $g\in G$, there is an additive bijection
\[
\eta_g\colon N\to N; \quad x\mapsto xt_{g^{-1}}.
\]
Then $\eta_{gh}=\eta_g\circ\eta_h$,  and there are equalities
\[
\eta_g(x)\upsigma_g(\lambda)=(x t_{g^{-1}})(\upsigma_g(\lambda)t_1)=x\lambda t_{g^{-1}}=\eta_g(x\lambda).
\]
Thus $(N,\eta)\in \fmod(G,\Lambda)$. We put $^GN\defeq (N,\eta)$. 
\end{ex}

The constructions in Example \ref{skew action} induce functors
 \[
 (-)^{\#}\colon\fmod(G,\Lambda)\to \fmod\Lambda\#G\quad {\rm and}\quad ^G(-)\colon \fmod\Lambda\#G\to \fmod(G,\Lambda),
 \]
and it is easy to verify that $^G((N,\eta)^{\#})\cong (N,\eta)$ and $(^GN)^{\#}\cong N$. Therefore, the following holds.

\begin{prop}\label{action skew equiv}
The functors 
\begin{align*}
(-)^{\#}\colon&\fmod(G,\Lambda)\to \fmod\Lambda\#G\\
^G(-)\colon&\fmod\Lambda\#G\to \fmod(G,\Lambda)
\end{align*}
are exact equivalences and mutually inverse. 
\end{prop}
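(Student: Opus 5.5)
The plan is to check directly that the two constructions of Example \ref{skew action} are functors, that each composite is the identity, and that both preserve exact sequences. This is a dictionary argument, so every step is a direct verification.

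First, take $(N,\eta)\in\fmod(G,\Lambda)$ and check that the rule $x\cdot(\lambda t_g)\defeq\eta_{g^{-1}}(x\lambda)$ makes $N$ a right $\Lambda\#G$-module. Additivity is clear, and $x\cdot t_1=x$ because $\eta_1=\id$. The only real point is associativity. For $x(\lambda t_g)(\lambda' t_h)$ I compute
\[
\eta_{h^{-1}}\bigl(\eta_{g^{-1}}(x\lambda)\lambda'\bigr)=\eta_{h^{-1}}\eta_{g^{-1}}\bigl(x\lambda\upsigma_g(\lambda')\bigr)=\eta_{(gh)^{-1}}\bigl(x\lambda\upsigma_g(\lambda')\bigr).
\]
Here I use $\eta_{g^{-1}}(y)\lambda'=\eta_{g^{-1}}(y\upsigma_g(\lambda'))$, which comes from the semilinearity $\eta_{g^{-1}}(y\mu)=\eta_{g^{-1}}(y)\upsigma_{g^{-1}}(\mu)$. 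The result is exactly $x\cdot(\lambda\upsigma_g(\lambda')t_{gh})$, as required. Finite generation over $\Lambda\#G$ follows because a finite $\Lambda$-generating set still generates over the larger ring. The converse direction, that $^GN$ lies in $\fmod(G,\Lambda)$, is already verified in Example \ref{skew action}(2); the only addition is that $N$ is finitely generated over $\Lambda$ because $\Lambda\#G$ is a finitely generated free $\Lambda$-module.

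Next, check functoriality on morphisms. Both functors should be the identity on underlying maps. If $\varphi$ is $\Lambda$-linear and commutes with the $\eta$'s, then $\varphi(x\cdot\lambda t_g)=\varphi(\eta_{g^{-1}}(x\lambda))=\eta'_{g^{-1}}(\varphi(x)\lambda)$, so $\varphi$ is $\Lambda\#G$-linear. Conversely, a $\Lambda\#G$-linear map is $\Lambda$-linear by restricting along $\lambda\mapsto\lambda t_1$, and it commutes with $\eta_g=(-)t_{g^{-1}}$. Hence both functors are fully faithful on hom-sets.

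Then check the two composites. Starting from $(N,\eta)$, the action on $(N,\eta)^{\#}$ gives $x\cdot(\lambda t_1)=x\lambda$ and $x\cdot t_{g^{-1}}=\eta_g(x)$, so ${}^G((N,\eta)^{\#})=(N,\eta)$ on the nose. Starting from a $\Lambda\#G$-module $N$, the recovered action is $\eta_{g^{-1}}(x\lambda)=(x\lambda t_1)t_g=x(\lambda t_g)$, which is the original action. So the functors are mutually inverse, even as strict equalities.

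Finally, exactness. In both categories kernels, cokernels and exactness are computed on underlying abelian groups, and both functors are the identity on underlying groups and maps, so they preserve exact sequences. No step here is a serious obstacle; the only place needing care is the associativity computation, specifically getting the inverses and the direction of the semilinearity right.
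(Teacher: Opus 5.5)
Your proposal is correct and takes the same route as the paper. The paper only says that the constructions of Example \ref{skew action} are easily checked to be mutually inverse, and your associativity computation, your identification of morphisms, and the strict equalities ${}^G((N,\eta)^{\#})=(N,\eta)$ and $({}^GN)^{\#}=N$ supply those omitted details; exactness is in any case automatic for an equivalence of abelian categories.
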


We also denote by 
\begin{align*}
(-)^{\#}\colon&\fmod_G\Lambda\simto \fmod\Lambda\#G\\
^G(-)\colon&\fmod\Lambda\#G\simto \fmod_G\Lambda
\end{align*}
the corresponding equivalences.

\begin{lem}\label{skew action free}
The following hold.
\begin{itemize} \item[$(1)$] There is an isomorphism $\Lambda\#G\cong \bigoplus_{g\in G}\upsigma_g^*\Lambda$ of  right  $\Lambda$-modules. In particular, $\Lambda\#G$ is a free right $\Lambda$-module of rank $|G|$. 
\item[$(2)$] If $|G|^{-1}\in\Lambda$,
then the right $\Lambda\#G$-module $\Lambda^{\#}$ is a direct summand of $\Lambda\#G$. 
\end{itemize}
\begin{proof}
(1) Note that there is a direct sum decomposition $\Lambda\#G\cong \bigoplus_{g\in G}\Lambda t_g$ as right $\Lambda$-modules. Consider the following additive bijection
\[
\varphi\colon \Lambda t_g\simto\upsigma_{g^{-1}}^*\Lambda;\quad \lambda t_g\mapsto \lambda\otimes_{g^{-1}}1.
\]
Then $\varphi((\lambda t_g)\lambda')=\varphi(\lambda\upsigma_g(\lambda')t_g)=\lambda\upsigma_g(\lambda')\otimes_{g^{-1}}1=\lambda\otimes_{g^{-1}}\lambda'=\varphi(\lambda t_g)\lambda'$, and so $\varphi$ is $\Lambda$-linear.\\ 
(2) We consider an additive map given by
\[
\iota\colon \Lambda^{\#}\to \Lambda\#G; \quad \lambda\mapsto \sum_{g\in G}\upsigma_g(\lambda)t_g.
\]
Then  equalities
\[
\iota\bigl(\lambda(\lambda't_g)\bigr)=\iota\bigl(\upsigma_{g^{-1}}(\lambda\lambda')\bigr)=\sum_{h\in G}\upsigma_{h(g^{-1})}(\lambda\lambda')t_h=\sum_{f\in G}\upsigma_{f}(\lambda)\upsigma_f(\lambda')t_{fg}=\iota(\lambda)(\lambda't_g)
\] 
show that $\iota$ is $\Lambda\#G$-linear. We can also check that the map
\[
\pi\colon \Lambda\#G\to \Lambda^{\#}; \quad \sum_{g\in G}\lambda_gt_g\mapsto \frac{1}{|G|}\sum_{g\in G}\upsigma_{g^{-1}}(\lambda_g)
\]
is $\Lambda\#G$-linear and $\pi\circ\iota=\id_{\Lambda^{\#}}$. This proves the result.
\end{proof}
\end{lem}

\begin{prop}\label{action ext}
Let $(N,\eta),(N',\eta')\in\fmod(G,\Lambda)$. Assume $|G|^{-1}\in\Lambda$. 
\begin{itemize}
\item[$(1)$] If $(N,\eta)$ is a projective object in $\fmod(G,\Lambda)$, then $N\in\proj\Lambda$.
\item[$(2)$] For each $i\in\Z$, there is an isomorphism
\[
\Ext^i_{(G,\Lambda)}((N,\eta),(N',\eta'))\cong \Ext^i_{\Lambda}(N,N')^G.
\]
\end{itemize}
\begin{proof}
(1) Assume that  $(N,\eta)$ is a projective object. Then by Proposition \ref{action skew equiv}, $(N,\eta)$ is a direct summand of $^G(\Lambda\#G^{\oplus n})$ for some $n>0$. By Lemma \ref{skew action free}\,(1), $N$ is a direct summand of a free $\Lambda$-module, and thus $N\in\proj\Lambda$.\\
(2) By Proposition \ref{action skew equiv}, there is a projective resolution
\[
\cdots\to (P_1,\eta_1)\xrightarrow{\delta_1}(P_0,\eta_0)\xrightarrow{\delta_0}(N,\eta)\to0
\]
of $(N,\eta)$ in $\fmod(G,\Lambda)$. By (1), 
\[
\cdots\to P_1\xrightarrow{\delta_1}P_0\xrightarrow{\delta_0}N\to0
\] 
is a projective resolution of $N$. Since the functor $(-)^G\colon \Mod(G,\Z\left[|G|^{-1}\right])\to \Mod\Z\left[|G|^{-1}\right]$ is exact by Proposition \ref{inv exact}\,(3), where the $G$-action on $\Z\left[|G|^{-1}\right]$ is trivial, we have
\begin{align*}
\Ext^i((N,\eta),(N',\eta'))&\cong H^i(\Hom((P_{\bullet},\delta_{\bullet}),(N',\eta')))\\
&\cong H^i(\Hom(P_{\bullet},N')^G)\\
&\cong H^i(\Hom(P_{\bullet},N'))^G\cong \Ext^i(N,N')^G.
\end{align*}
This finishes the proof. 
\end{proof}
\end{prop}

\begin{cor}\label{proj eq}
Let $(N,\eta)\in \fmod(G,\Lambda)$, and assume $|G|^{-1}\in\Lambda$.  Then the following are equivalent.
\begin{itemize}
\item[$(1)$] $N\in\proj\Lambda$. 
\item[$(2)$] $(N,\eta)^{\#}\in\proj\Lambda\#G$.
\item[$(3)$] $(N,\eta)$ is a projective object in $\fmod(G,\Lambda)$.
\end{itemize}
\begin{proof}
(2)$\Leftrightarrow$(3) follows from Proposition \ref{action skew equiv}, and  (3)$\Rightarrow$(1) is nothing but Proposition \ref{action ext}\,(1). (1)$\Rightarrow$(3) follows from Proposition \ref{action ext}\,(2).
\end{proof}
\end{cor}

\section{Equivariant algebras}

In this section,  $S$ is a commutative noetherian ring, and  
\[
\uprho\colon G\to \Aut(S); \quad g\mapsto \uprho_g
\]
is a group homomorphism defining a left $G$-action on $S$. We assume that $|G|^{-1}\in\Lambda$.

\vspace{2mm}
\subsection{Equivariant algebras}~
\vspace{2mm}

\begin{dfn}
A {\it $G$-action} on an $S$-algebra $\Lambda$ is a group homomorphism
\[
\upsigma\colon G\to \Aut(\Lambda); \quad g\mapsto \upsigma_g
\]
that is compatible with the $G$-action $\uprho$ on $S$. 
An $S$-algebra with a $G$-action is called a {\it $G$-equivariant} $S$-algebra.
\end{dfn}

\begin{rem}
If $\Lambda$ is a $G$-equivariant $S$-algebra, the category $\fmod_G\Lambda$ can be identified with the category $\coh [\Lambda/G]$ of finitely generated right $[\Lambda/G]$-modules, where $[\Lambda/G]$ is an algebra over the quotient stack $[\Spec S/G]$ (see \cite{Hir} for the details).
\end{rem}

In what follows, suppose that $\Lambda$ is a $G$-equivariant $S$-algebra. We can construct new $G$-equivariant $S$-algebras by considering endomorphism rings of $G$-equivariant modules. 

\begin{ex}
Let $(N,\theta)\in\fmod_G\Lambda$. Then the endomorphism ring
\[
\bLam_N\defeq\End_{\Lambda}(N)
\]
has a $G$-action given by \eqref{left action}. This action is compatible with the $G$-action $\uprho$ on $S$, and thus $\bLam_N$ is a $G$-equivariant $S$-algebra.  
\end{ex}

\begin{lem}\label{inv endo}
Let $(N,\theta)\in \fmod_G\Lambda$, and write $\Ind(N)=(\bigoplus_g\upsigma_g^*N, \eta)\in \fmod(G,\Lambda)$.   For each $g,h\in G$, let $\psi_{g,h}\in \Hom_{\Lambda}(\upsigma_g^*N,\upsigma_h^*N)$, and define an additive map $\eta_{g,h}\colon N\to N$ by the formula $\psi_{g,h}(x\otimes_{\upsigma_g}1)=\eta_{g,h}(x)\otimes_{\upsigma_{h}}1$. Then $\psi\defeq(\psi_{g,h})_{g,h\in G}\in \End_{\Lambda}(\bigoplus_g\upsigma_g^*N)^G$ if and only if $\eta_{g,h}=\eta_{fg,fh}$ for all $f,g,h\in G$. 
\begin{proof}
By definition, $\psi\in \End_{\Lambda}(\bigoplus_g\upsigma_g^*N)^G$ if and only if for all $h\in G$ the following diagram commutes:
 \[
\begin{tikzcd}
\bigoplus_g\upsigma_g^*N\ \arrow[rr, "\psi"]\arrow[d, "\eta^{-1}_h"']&&\bigoplus_g\upsigma_g^*N\\
\bigoplus_g\upsigma_g^*N\arrow[rr, "\psi"]&&\bigoplus_g\upsigma_g^*N\arrow[u, "\eta_h"]
\end{tikzcd} 
\]
Let $x\otimes_{g}1\in\upsigma_g^*N$. Then 
\begin{align*}
\left(\eta_h\circ\psi\circ(\eta_h)^{-1}\right)(x\otimes_g1)&=(\eta_h\circ\psi)(x\otimes_{h^{-1}g}1)\\
&=\eta_h\left(\bigoplus_{f\in G}\psi_{h^{-1}g,f}(x\otimes_{h^{-1}g}1)\right)\\
&=\bigoplus_{f\in G}\eta_{h^{-1}g,f}(x)\otimes_{hf}1.
\end{align*}
Since $\psi(x\otimes_g1)=\bigoplus_{e\in G}\eta_{g,e}(x)\otimes_e1$, we see that $\left(\eta_h\circ\psi\circ(\eta_h)^{-1}\right)(x\otimes_g1)=\psi(x\otimes_g1)$ if and only if $\eta_{h^{-1}g,f}(x)=\eta_{g,hf}(x)$ for all $h,f\in G$ and $x\in N$. This implies the result. 
\end{proof}
\end{lem}

\begin{prop}\label{skew end}
Let $(N,\theta)\in\fmod_G\Lambda$. Then there is a ring isomorphism
\[
\bLam_N\#G\simto\End_{\Lambda}\Bigl(\bigoplus_{g\in G}\upsigma_g^*N\Bigr)^G.
\]
\begin{proof}
Let $\varphi=\sum_{g\in G}\varphi_gt_g\in \bLam_N\#G$. Then we define an additive map
\[
\eta^{\varphi}_{g,h}\colon N\to N;\quad x\mapsto \varphi_{h^{-1}g}\left(\theta_{h^{-1}g}(x\otimes_{h^{-1}g}1)\right).
\]
Then $\eta^{\varphi}_{g,f}(x\lambda)=\eta^{\varphi}_{g,f}(x)\upsigma_{h^{-1}g}(\lambda)$ holds for each $\lambda\in\Lambda$. Thus, by Lemma \ref{sigma g h}, the map
\[
\psi^{\varphi}_{g,h}\colon \upsigma_g^*N\to \upsigma^*_hN;\quad x\otimes_g1\mapsto \eta_{g,h}^{\varphi}(x)\otimes_h1
\]
is $\Lambda$-linear. Furthermore, we have $\psi^{\varphi}_{g,h}=\psi^{\varphi}_{fg,fh}$ for all $f,g,h\in G$. Hence, by  Lemma \ref{inv endo}, there is  an additive map 
\[
\alpha\colon \Lambda_N\#G\to\End_{\Lambda}\Bigl(\bigoplus_{g\in G}\upsigma_g^*N\Bigr)^G
\] 
defined by 
$\alpha(\varphi)\defeq(\psi^{\varphi}_{g,h}\colon \upsigma_g^*N\to\upsigma^*_hN)$. Conversely, let $\psi=(\psi_{g,h})\in \End_{\Lambda}\Bigl(\bigoplus_{g\in G}\upsigma_g^*N\Bigr)^G$, and suppose that $\psi_{g,h}\colon \upsigma_g^*N\to\upsigma^*_hN$ is given by $\eta_{g,h}\colon N\to N$ in the sense that $\psi_{g,h}(x\otimes_g1)=\eta_{g,h}(x)\otimes_h1$. Define an additive map 
\[
\beta\colon \End_{\Lambda}\Bigl(\bigoplus_{g\in G}\upsigma_g^*N\Bigr)^G\to\Lambda_N\#G
\]
by $\beta(\psi)\defeq\sum_{g\in G}\varphi^{\psi}_gt_g$, where 
$\varphi^{\psi}_g(x)\defeq \eta_{hg,h}\left((\varepsilon^{-1}_{g}\circ\theta^{-1}_{g})(x)\right)$ with $h\in G$ arbitrary, where 
\[
\varepsilon_g\colon N\simto \upsigma_g^*N
\] 
is an additive bijection given by $\varepsilon_g(x)\defeq x\otimes_g1$. Since $\alpha\circ\beta=\id$ and $\beta\circ\alpha=\id$, we see that $\alpha$ is bijective.

 Now it is enough to show that $\alpha$ is a ring homomorphism. 
 Let $\varphi_g,\varphi'_h\in\bLam_N$, and consider $\varphi=\varphi_gt_g, \,\varphi'=\varphi'_ht_h\in \bLam_N\#G$. We only need to prove the equality $\alpha(\varphi)\circ\alpha(\varphi')=\alpha(\varphi\cdot\varphi')$. 
By definition, we have
\[
\eta^{\varphi}_{a,b}
=
\begin{cases}
\varphi_g(\theta_g(x\otimes_g1))&\mbox{ if }\,\, b^{-1}a=g\\
0&\mbox{ otherwise },
\end{cases}
\quad\mbox{and}\quad
\eta^{\varphi'}_{c,d}
=
\begin{cases}
\varphi'_h(\theta_h(x\otimes_h1))&\mbox{ if }\,\, d^{-1}c=h\\
0&\mbox{ otherwise }
\end{cases}
\]
Thus non-zero $\eta^{\varphi}_{a,b}\circ\eta^{\varphi'}_{c,d}$ is of the form
\[
\eta^{\varphi}_{f(h^{-1}),f(h^{-1}g^{-1})}\circ\eta^{\varphi'}_{f,f(h^{-1})}=\varphi_g\circ\theta_g\circ\varepsilon_g\circ\varphi_h\circ\theta_h\circ\varepsilon_h.
\]
On the other hand, we have 
 \begin{align*}
\varphi\cdot\varphi'&=\bigl(\varphi_g\circ (g\cdot\varphi_h)\bigr)t_{gh}=\Bigl(\varphi_g\circ\theta_g\circ\upsigma^*_g(\varphi_h)\circ\theta^{-1}_g\Bigr)t_{gh}.
 \end{align*}
Hence non-zero $\eta^{\varphi\cdot\varphi'}_{e,f}$ is of the form
\begin{align*}
\eta^{\varphi\cdot\varphi'}_{e,e(h^{-1}g^{-1})}&=\varphi_g\circ\theta_g\circ\upsigma^*_g(\varphi_h)\circ\theta^{-1}_g\circ\theta_{gh}\circ\varepsilon_{gh}\\
&=\varphi_g\circ\theta_g\circ\upsigma^*_g(\varphi_h)\circ\upsigma_g^*(\theta_{h})\circ\varepsilon_{gh},
\end{align*}
where the second equality follows from the cocycle condition of $\theta$. Since $\varepsilon_g\circ\varphi_h\circ\theta_h\circ\varepsilon_h=\upsigma^*_g(\varphi_h)\circ\upsigma_g^*(\theta_{h})\circ\varepsilon_{gh}$, we see that 
\[
\eta^{\varphi}_{f(h^{-1}),f(h^{-1}g^{-1})}\circ\eta^{\varphi'}_{f,f(h^{-1})}=\eta^{\varphi\cdot\varphi'}_{f,f(h^{-1}g^{-1})}.
\]
This proves the equality $\alpha(\varphi)\circ\alpha(\varphi')=\alpha(\varphi\cdot\varphi')$. 
\end{proof}
\end{prop}

\vspace{2mm}
\subsection{Equivariant reflexive equivalences}~
\vspace{2mm}

In this section, we assume that $S$ is a normal domain. 
Let $\Lambda$ be a  module finite $G$-equivariant $S$-algebra. We say that $N\in \fmod_G\Lambda$ is {\it reflexive} (resp. {\it maximal} {\it Cohen--Macaulay}) if it is reflexive (resp. maximal Cohen--Macaulay) as an $S$-module. 
We denote by 
\[
\refl_G\Lambda \quad({\rm resp.}\quad \CM_G\Lambda)
\]
the category of finitely generated $G$-equivariant reflexive (resp. maximal Cohen--Macaulay) right $\Lambda$-modules. If $G$ is trivial, we simply write $\refl \Lambda$ (resp. $\CM\Lambda$).

\begin{dfn}
We say that $N\in\refl\Lambda$ is a {\it height one progenerator} (resp. {\it height one projective}) if $N_{\p}$  is a progenerator (resp. projective) as a right $\Lambda_{\p}$-module for all $\p\in\Spec S$ with ${\rm ht}(\p)\leq1$.
\end{dfn}

Let $(N,\theta)\in\fmod_G\Lambda$ and $(N',\theta')\in\refl_G\Lambda$. Then $\Hom_{\Lambda}(N,N')$ has the left $G$-action given by \eqref{left action}, and it is reflexive by \cite[Proposition 2.4\,(1)]{IR08}. Hence $\Hom_{\Lambda}(N,N')\in\refl_G\bLam_N$. 
The following is a $G$-equivariant version of a reflexive equivalence.

\begin{lem}
Assume that  $\Lambda\in\refl\Lambda$, and let $(N,\theta)\in\refl_G\Lambda$. 
If $N$ is a height one progenerator, then 
\[
\Hom_{\Lambda}(N,-)\colon \refl_G\Lambda\to \refl_G\bLam_N
\]
is an equivalence. In particular, for any $(L,\theta)\neq0\in \refl_GS$, the functor 
\[\Hom_S(L,-)\colon  \refl_GS\to \refl_G\bLam_L\]
 is an equivalence. 
\begin{proof}
Set $\sfF\defeq\Hom_{\Lambda}(N,-)\colon \refl_G\Lambda\to \refl_G\bLam_N$, and consider 
\[
\sfG\defeq\Hom_{\bLam_N}(\sfF(\Lambda),-)\colon \refl_G\bLam_N\to \refl_G\Lambda.
\] 
By \cite[Proposition 2.4\,(2)]{IR08}, there are natural isomorphisms $\Res(\sfG(\sfF(X)))\simto \Res(X)$
 and $\Res(\sfF(\sfG(Y)))\simto \Res(Y)$ for $X\in \refl_G\Lambda$ and $Y\in\refl_G\bLam_N$.  Since these isomorphisms are compatible with $G$-equivariant structures, $\sfF$ and $\sfG$ are equivalences. The last statement follows from \cite[Lemma 2.5\,(3)]{IW14a}. 
\end{proof}
\end{lem}

In the rest of this section, we assume that $S$ contains a field $k$, and the $G$-action on $S$ is given by 
\[
\uprho\colon G\to \Aut_k(S),
\]
 where $\Aut_k(S)$ is the group of automorphisms of the commutative $k$-algebra $S$. Then $k\subseteq S^G$. We denote by $\upvarphi\colon S^G\hookto S$ the natural injection, and we also write
 \[
 \upvarphi\colon \Spec S\to \Spec S^G
 \]
for the induced quotient map by abuse of notation. 
\begin{dfn}
We say that the affine quotient $\upvarphi\colon \Spec S\to\Spec S^G$ is an {\it almost principal $G$-bundle} if there are open subsets $U\subseteq \Spec S$ and $V=\upvarphi(U)\subseteq \Spec S^G$ such that the complements $\Spec S\backslash U$ and $\Spec S^G\backslash V$ have codimension greater than or equal to two, $U$ is $G$-stable and the restriction $\upvarphi|_U\colon U\to V$ is a principal $G$-bundle (with respect to fpqc topology over $k$ \cite[Definition 2.6]{Has1}), where $G$ is considered as the constant group scheme over $k$.
\end{dfn}

  We say that the $G$-action on $S$ is {\it small} if the free locus $U$ satisfies the condition
\[
\codim(\Spec S\backslash U)\geq2,
\]
where $\codim(\emptyset)\defeq\infty$.

\begin{lem}\label{small principal}
 The $G$-action on $S$ is small if and only if  the affine quotient $\upvarphi\colon \Spec S\to \Spec S^G$ is an almost principal $G$-bundle.
\begin{proof}
By assumption, $G$ is LFF over $k$ in the sense of \cite[(11.11)]{Has}. Thus the result follows from \cite[Proposition 14.8]{Has}.
\end{proof}
\end{lem}

The following is another version of a reflexive equivalence. 
\begin{prop}[{\cite[Corollary 11.4]{Has}}]\label{inv refl}
If the $G$-action on $S$ is small,  the functor
\[
(-)^G\colon \refl_GS\to\refl S^G
\]
is an equivalence, and its inverse is given by $M\mapsto (M\otimes_{S^G}S)^{**}=(\upvarphi^*M)^{**}$.
\begin{proof}
This follows from Lemma \ref{small principal} and \cite[Corollary 11.4]{Has}.
\end{proof}
\end{prop}

By the previous reflexive equivalence, we obtain the following.
\begin{lem}\label{varphi ind}
Let $(N,\theta)\in\refl_GS$. If the $G$-action on $S$ is small,  there is an isomorphism
\[
(\upvarphi^*\upvarphi_*N)^{**}\cong \Ind(N)
\]
in $\refl_GS$. In particular, there is an isomorphism $(\upvarphi^*\upvarphi_*N)^{**}\cong \bigoplus_{g\in G}\upsigma_g^*N$ in $\refl S$, and thus we have $\add N=\add \bigl((\upvarphi^*\upvarphi_*N)^{**}\bigr)$. 
\begin{proof}
By Lemma \ref{ind inv}, there is an isomorphism $\upvarphi_*N\cong \Ind(N)^G$. Since $\Ind(N)\in\refl_GS$, there are isomorphisms $\Ind(N)\cong\left(\upvarphi^*\left(\Ind(N)^G\right)\right)^{**}\cong (\upvarphi^*\upvarphi_*N)^{**}$, where the first isomorphism follows from  Proposition \ref{inv refl}. The second assertion follows from the first assertion. The last assertion follows from the second, since $\theta_g\colon\upsigma^*_gN\simto N$ for each $g\in G$. 
\end{proof}
\end{lem}

\vspace{2mm}
\subsection{Auslander-type isomorphism}~
\vspace{2mm}

In this section, we assume that the ring $S$ is a normal domain containing a field $k$, and that the $G$-action on $S$ is given by $\uprho\colon G\to \Aut_k(S)$ so that  $k\subseteq R\defeq S^G$.  Write 
\[
\upvarphi\colon R\hookto S
\]
for the natural injection. By abuse of notation, we also denote by $\upvarphi\colon \Spec S\to \Spec R$  the affine quotient. 
The following is a version of Auslander's result \cite{Aus}, \cite[Theorem 5.12]{LW}. 

\begin{prop}\label{auslander}
Let $(N,\theta)\in \refl_GS$, and  assume that the $G$-action on $S$ is small.  Then there is an $R$-algebra isomorphism
\[
\End_{R}(\upvarphi_*N)\cong \End_S(N)\#G.
\]
\begin{proof} 
By Proposition \ref{inv refl}, Lemma \ref{varphi ind} and Proposition \ref{skew end}, there are isomorphisms
\[
\End_{R}(\upvarphi_*N)\cong\End_{\refl_GS}\bigl(\Ind(N)\bigr)\cong \End_S\Bigl(\bigoplus_{g\in G}\upsigma_g^*N\Bigr)^G\cong \End_S(N)\#G,
\]
where all isomorphisms are $R$-linear. 
\end{proof}
\end{prop}

\vspace{2mm}
\section{Equivariant NCCRs and $G$-MMAs}

In this section, $S$ is a Cohen--Macaulay normal domain containing a field $k$. Let $G$ be a finite group, and consider a left $G$-action $\upsigma\colon G\to \Aut_k(S)$ on $S$. 
We assume that $|G|$ is invertible in $k$. Set $R\defeq S^G$, and use the same notation
\[
\upvarphi\colon R\hookto S \quad \mbox{ and } \quad\upvarphi\colon \Spec S\to \Spec R
\]
for the natural injection and the  affine quotient. We also assume that the Cohen--Macaulay rings $S$ and $R$ have  canonical modules $\omega_S$ and $\omega_R$ respectively, and they are equi-codimensional. Here a noetherian commutative ring is said to be {\it equi-codimensional} if all maximal ideals  have the same height. 

\vspace{2mm}
\subsection{Equivariant modifying modules}~
\vspace{2mm}

Following \cite{IW14a}, we recall several definitions on modifying modules and their fundamental properties. We say that $N\in\refl S$ is  {\it modifying} if  $\bLam_N=\End_S(N)\in\CM S$. A {\it maximal modifying} ({\it MM}) $S$-module is a modifying $S$-module $N$ such that the assumption that  
$N\oplus L$  is modifying implies that $L\in\add N$. 
We write 
\[
\catmodif S\quad (\mbox{resp.} \quad \catMM S)
\]
for  the category of modifying (resp. MM) $S$-modules.

 A {\it modification algebra} of $S$ is an  $S$-algebra of the form $\bLam_N$ for some $N\in\catmodif S$, and  a {\it maximal modification algebra} ({\it MMA}) of $S$ is a modification algebra $\bLam_N$ with $N\in\catMM S$. A {\it  noncommutative crepant resolution (NCCR)} of $S$ is a modification algebra $\bLam_N$  that is {\it non-singular}, i.e., $\gldim\bLam_{N_{\p}}=\dim S_{\p}$ for all $\p\in\Spec S$. We say that $N$ {\it gives an NCCR (resp. MMA)} of $S$ if $\bLam_N$ is an NCCR (resp. MMA) of $S$. 
 
 We say that $N\in\CM S$ is a {\it CT module} if for every $L\in\CM S$ the following equivalences hold:
\[
L\in\add N\quad\Longleftrightarrow\quad \Hom(N,L)\in\CM S\quad\Longleftrightarrow\quad\Hom(L,N)\in \CM S
\]
We denote by $\catCT S$ the category of CT $S$-modules. The following is a useful characterization of a CT module.

\begin{prop}[{\cite[Theorem 5.4]{IW14a}}]\label{char ct}
Let $N\in\CM S$. Then $N$ is a CT module if and only if $S\in\add N$ and $\bLam_N$ is an NCCR of $S$. 
\end{prop}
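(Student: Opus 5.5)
The plan is to prove both directions by transporting everything to the endomorphism ring $\bLam_N$ through the reflexive equivalence $\Hom_S(N,-)\colon\refl S\to\refl\bLam_N$. This functor is an equivalence whenever $S\in\add N$, since then $N$ is a height one progenerator. Write $d\defeq\dim S$; since $S$ is equi-codimensional, this is the common height of its maximal ideals.

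($\Rightarrow$) Suppose $N$ is a CT module. Since $S\in\CM S$ and $\Hom_S(S,N)=N\in\CM S$, the defining equivalences give $S\in\add N$. Since $N\in\add N$, we get $\bLam_N=\Hom_S(N,N)\in\CM S$. It remains to show that $\bLam_N$ is non-singular.

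1. Bound the global dimension. Let $X\in\fmod\bLam_N$ and take a partial projective resolution
\[
0\to K\to P_{d-1}\to\cdots\to P_0\to X\to 0
\]
with $P_i\in\add\bLam_N$. Each $P_i$ is in $\CM S$, so the depth lemma, applied after localizing at each maximal ideal, gives $K\in\CM S$.
2. Identify $K$ with a module of the form $\Hom_S(N,L)$. If $d\geq 2$, then $K$ is reflexive, so by the reflexive equivalence $K\cong\Hom_S(N,L)$ for some $L\in\refl S$. The small-dimensional cases are handled directly.
3. Show $L$ is Cohen--Macaulay. Because $S\in\add N$, the module $L=\Hom_S(S,L)$ is an $S$-direct summand of $\Hom_S(N,L)=K$, so $L\in\CM S$.
4. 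Conclude projectivity. The CT property now gives $L\in\add N$, so $K\in\add\bLam_N$. Hence $\gldim\bLam_N\leq d$.
5. Localize. For $\p\in\Spec S$ we have $\gldim(\bLam_N)_\p\leq d<\infty$ and $(\bLam_N)_\p\in\CM S_\p$. The standard fact for Cohen--Macaulay orders of finite global dimension (the Auslander--Buchsbaum formula over $\bLam_\p$, applied to simple modules) then gives $\gldim(\bLam_N)_\p=\dim S_\p$.

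($\Leftarrow$) Suppose $S\in\add N$ and $\bLam_N$ is an NCCR. If $L\in\add N$, then $\Hom(N,L)$ and $\Hom(L,N)$ are summands of $\bLam_N^{\oplus n}$, so both lie in $\CM S$.

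Conversely, suppose $L\in\CM S$ and $\Hom_S(N,L)\in\CM S$.
1. Show $\Hom_S(N,L)$ is projective. Locally, $(\bLam_N)_\p$ has finite global dimension, so the Auslander--Buchsbaum formula holds over it. A module that is Cohen--Macaulay over $S_\p$ therefore has projective dimension $0$, so $\Hom_S(N,L)\in\add\bLam_N$. Projectivity is local, so this globalizes.
2. Pull back along the equivalence. Since $\Hom_S(N,-)$ is an equivalence on reflexive modules and sends $N$ to $\bLam_N$, we get $L\in\add N$.

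For the remaining condition, suppose $\Hom_S(L,N)\in\CM S$.
1. Dualize. Since $L$ and $N$ are reflexive, $\Hom_S(L,N)\cong\Hom_S(N^*,L^*)$ and $\End_S(N^*)\cong\bLam_N^{\rm op}$.
2. Apply the previous argument to $N^*$. The ring $\bLam_N^{\rm op}$ is again a non-singular Cohen--Macaulay order, $S\in\add N^*$, and $L^*\in\refl S$. The reasoning above gives $\Hom_S(N^*,L^*)$ projective over $\End_S(N^*)$.
3. Return to $L$. A minor check is needed here: step 3 of the forward direction, with $N^*$ in place of $N$, gives $L^*$ as a summand of $\Hom_S(N^*,L^*)$, hence $L^*\in\CM S$. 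This is what makes the Auslander--Buchsbaum step apply. We conclude $L^*\in\add N^*$, and dualizing back gives $L\in\add N$.

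The main technical point I expect is the homological input on non-singular orders: that Cohen--Macaulay modules over a non-singular order are projective, and that $\gldim(\bLam_N)_\p=\dim S_\p$ follows from finite global dimension together with the Cohen--Macaulay property. I would establish both locally via the Auslander--Buchsbaum formula for module-finite algebras over a Cohen--Macaulay local ring, as in \cite{IW14a}.
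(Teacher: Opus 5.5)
Your proof is correct and is essentially the standard argument behind \cite[Theorem 5.4]{IW14a}, which the paper cites without reproducing. In the forward direction, syzygies together with the reflexive equivalence $\Hom_S(N,-)$ and the CT property give $\gldim\bLam_N\leq d$, and Auslander--Buchsbaum upgrades this to non-singularity. In the converse, Auslander--Buchsbaum over the NCCR makes $\Hom_S(N,L)$ projective, and the dual condition reduces to this case via $(-)^*$ and $\End_S(N^*)\cong\bLam_N^{\rm op}$.

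Two imprecisions, both harmless. First, $L$ is a summand of $K^{\oplus m}$, not of $K$ itself. Second, the check that $L^*\in\CM S$ is not what the Auslander--Buchsbaum step needs: that step only uses $\Hom_S(N^*,L^*)\cong\Hom_S(L,N)\in\CM S$, and $L^*$ is reflexive automatically.
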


 We say that $N,L\in\refl S$ are {\it additive equivalent}, denoted by $N\overset{\add}{\sim} L$, if $\add N=\add L$. The set 
 \[
 \modif S\quad (\mbox{resp.}\quad \MM S,\quad \NCCR S, \quad\CT S)
 \] 
 is defined to be the set of additive equivalence classes of  $S$-modules that are modifying (resp. maximal modifying, giving  NCCRs, CT). We define a partial order in the set $\modif S$ by 
 \[
 N<L \quad(\mbox{resp.} \quad N\leq L) \quad\Longleftrightarrow\quad \add N\subsetneq\add L\quad(\mbox{resp.} \quad\add N\subseteq\add L).
 \]
\begin{lem}\label{poset mm}
Let $N\in\catmodif S$. Then $N$ is a maximal modifying $S$-module if and only if $N$ is a maximal element in the partially ordered set $\modif S$. 
\begin{proof}
Assume that $N\in\catMM S$, and let $L\in \catmodif S$ with $N\leq L$. Then $\add N\subseteq\add L$. Thus $N\oplus L\in\add L$, and so $N\oplus L\in\catmodif S$. Since $N\in\catMM S$, we have $L\in\add N$. Hence, $\add L\subseteq \add N$, and thus $N$ is maximal. 

Conversely, if $N$ is maximal in $\modif S$ and $N\oplus L\in \catmodif S$, then $N\leq N\oplus L$. Since $N$ is maximal, $L\in\add (N\oplus L)=\add N$. Thus $N\in\catMM S$. 
\end{proof}
\end{lem}

For later use, we prepare the following lemma.

\begin{lem}[{\cite[Proposition 4.5, Proposition 2.17]{IW14a}}]\label{nccr mm}
Let $N\in\catmodif S$. 
\begin{itemize}
\item[$(1)$] If $N$ gives an NCCR of $S$, then $N\in\catMM S$. 
\item[$(2)$] The $S$-algebra $\bLam_N$ is non-singular (equivalently, $\bLam_N$ is an NCCR of $S$)  if and only if $\CM\bLam_N=\proj\bLam_N$.
\end{itemize}
\end{lem}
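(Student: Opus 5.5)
Part (1) will be deduced from part (2) together with the reflexive equivalence, so I would prove (2) first.

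\emph{Part (2).} Both conditions are local on $\Spec S$. Being maximal Cohen--Macaulay and being projective are each detected after localizing at every $\p\in\Spec S$, and $(\bLam_N)_{\p}=\bLam_{N_{\p}}$ is again a modification algebra of the Cohen--Macaulay local ring $S_{\p}$. So we may assume $S$ is local of dimension $d$ and $\Lambda\defeq\bLam_N\in\CM S$.

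The key tool is the Auslander--Buchsbaum formula for $S$-orders: if $X\in\fmod\Lambda$ has finite projective dimension, then $\prdim_{\Lambda}X+\operatorname{depth}_SX=\operatorname{depth}S=d$. I would prove it by the usual induction on $\prdim_\Lambda X$, using $\Lambda\in\CM S$ and the depth lemma.

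For ``$\Rightarrow$'', suppose $\gldim\Lambda=d<\infty$ and $X\in\CM\Lambda$. Then $\prdim X<\infty$ and $\operatorname{depth}X=d$, so the formula gives $\prdim X=0$; hence $\CM\Lambda=\proj\Lambda$.

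For ``$\Leftarrow$'', suppose $\CM\Lambda=\proj\Lambda$ and let $X\in\fmod\Lambda$. Its $d$-th syzygy $\Omega^dX$ has depth $d$ by the depth lemma, since $\Lambda\in\CM S$, so it is projective. Thus $\gldim\Lambda\le d$. Applying the formula to a nonzero finite-length module, which has depth $0$, shows it has projective dimension exactly $d$, so $\gldim\Lambda=d$. Running this at every prime gives the non-singularity condition $\gldim\bLam_{N_{\p}}=\dim S_{\p}$.

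\emph{Part (1).} Assume $\bLam_N$ is an NCCR and $N\oplus L\in\catmodif S$.

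First, $\Hom_S(N,L)$ is a direct summand of $\End_S(N\oplus L)\in\CM S$, so it lies in $\CM\bLam_N$.

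Second, by (2), $\CM\bLam_N=\proj\bLam_N$, so $\Hom_S(N,L)\in\add\bLam_N=\add\Hom_S(N,N)$.

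Third, $N\neq0$ is reflexive over the normal domain $S$, so it is a height one progenerator. Hence $\Hom_S(N,-)\colon\refl S\to\refl\bLam_N$ is an equivalence; this is the trivial-$G$ case of the equivariant reflexive equivalence lemma above, or \cite[Lemma 2.5]{IW14a}. This equivalence reflects $\add$, so $L\in\add N$, and $N\in\catMM S$.

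The main obstacle is the Auslander--Buchsbaum formula for the noncommutative order $\Lambda$, in particular the base case: a projective $\Lambda$-module has depth $d$ because $\Lambda\in\CM S$. After that, both directions of (2) and the deduction of (1) are formal.
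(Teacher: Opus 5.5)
The paper gives no argument here; it only cites \cite{IW14a}. Your deduction of (1) from (2) is correct and is the standard one: $\Hom_S(N,L)$ is a summand of $\End_S(N\oplus L)\in\CM S$, so it lies in $\CM\bLam_N=\proj\bLam_N$, and the reflexive equivalence $\Hom_S(N,-)\colon\refl S\to\refl\bLam_N$ reflects $\add$.

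\textbf{The gap: direction ``$\Rightarrow$'' of (2).} You use the equality $\prdim_\Lambda X+\operatorname{depth}_SX=d$ for an arbitrary order $\Lambda\in\CM S$. This is false in that generality, and your induction cannot prove it. From $0\to\Omega X\to P\to X\to 0$ the depth lemma only gives $\operatorname{depth}X\ge d-\prdim X$. The equality fails exactly at the step $\prdim X=1$: there you would need that a CM module never has projective dimension $1$, which is the statement you are trying to prove. The base case you flag as the main obstacle is the trivial part.

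Here is a counterexample within the hypotheses of the lemma. Take $S=\C[\![x,y]\!]^G$ with $G=\tfrac13(1,1)$. Let $V_i\subseteq\C[\![x,y]\!]$ be the module of covariants spanned by monomials of degree $\equiv i\pmod 3$, and put $N=V_0\oplus V_1$. The Koszul-type sequence $0\to V_0\to V_1^{\oplus 2}\xrightarrow{(x\ y)}V_2\to 0$ is exact, and its right map is an $\add N$-approximation. Hence $\Hom_S(N,V_2)$ is a reflexive, hence CM, $\bLam_N$-module of projective dimension exactly $1$; it is not projective because $V_2\notin\add N$. Here $\prdim+\operatorname{depth}=3\neq 2$.

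A simpler example is $S=k$ with $\Lambda$ the upper triangular $2\times2$ matrices: the non-projective simple has $\prdim=1$ and depth $0$. This $\Lambda$ has finite global dimension, so knowing $\prdim X<\infty$ is not enough. You must use the exact value $\gldim\bLam_{N_\p}=\dim S_\p$.

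\textbf{How to use that value.} Take $S$ local, $\Lambda=\bLam_N$ and $\gldim\Lambda=d$.
Since $\injdim\Lambda_\Lambda\le d$, choose a system of parameters $\underline{x}$ of $S$; it is regular on $\Lambda$ and on every $X\in\CM\Lambda$. Set $\bar\Lambda\defeq\Lambda/\underline{x}\Lambda$.
Rees' lemma gives $\Ext^{d+j}_\Lambda(M,\Lambda)\cong\Ext^j_{\bar\Lambda}(M,\bar\Lambda)$ for $\bar\Lambda$-modules $M$. Hence the artinian ring $\bar\Lambda$ is self-injective, so it is quasi-Frobenius.
For $X\in\CM\Lambda$, reduce a finite projective resolution of $X$ modulo $\underline{x}$; it stays exact because $\underline{x}$ is $X$-regular. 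This gives a finite projective resolution of $X/\underline{x}X$ over $\bar\Lambda$, and it splits because projectives are injective over a QF ring.
So $X/\underline{x}X\in\proj\bar\Lambda$, and then $X\in\proj\Lambda$ by Nakayama.

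In other words, non-singular orders are Gorenstein, and for Gorenstein orders the Auslander--Buchsbaum equality does hold. Note that in the proof of Theorem \ref{Q-CY tilt} the paper applies \cite[Lemma 2.16]{IW14a} only to a Gorenstein modification algebra.

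\textbf{A smaller point in ``$\Leftarrow$''.} The hypothesis $\CM\bLam_N=\proj\bLam_N$ does not localize formally, since a CM $\bLam_{N_\p}$-module need not be the localization of a CM $\bLam_N$-module. It does localize, for two reasons.
First, $\omega_{\bLam_N}\in\CM\bLam_N=\proj\bLam_N$, so every $\bLam_{N_\p}$ is Gorenstein.
Second, your $\Omega^{\dim S}$ argument run globally ($\dim S<\infty$ by equi-codimensionality) shows every $\bLam_{N_\p}$-module has finite projective dimension.
A CM module of finite projective dimension over a Gorenstein order is projective. With this, your local argument for ``$\Leftarrow$'' goes through.
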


We introduce the following terminology. 

\begin{dfn}
We say that $S$ is {\it MM-finite} (resp. {\it MM-simple}) if   $\MM S$ is a non-empty  finite set (resp. has a unique element). 
\end{dfn}

\begin{ex} It is known that a 3-dimensional local hypersurface $\Q$-factorial isolated singularity $S$ is MM-simple, and MM $S$-modules are projective \cite{Dao}. Another example of a 3-dimensional MM-simple normal domain is a $cA_n$-singularity  $S'\defeq\C[\![x,y,z,w]\!]/\l xy-f(z,w)^2\r$, where $f(z,w)$ is irreducible.  In this example, the basic MM $S'$-module is $S'\oplus\l x,f\r$  \cite[Theorem 5.20]{IW18}. A non-Gorenstein example of an MM-simple ring is the quotient singularity $S''\defeq\C[\![x,y,z]\!]^G$, where $G\defeq\frac{1}{2}(1,1,1)$ (see \cite[Theorem 3.7]{DITW} or Corollary \ref{mm fact}).  Later, we will give an example of non-Gorenstein MM-simple ring which has no NCCR.
\end{ex}
 
Now we consider equivariant versions of the above notions.

\vspace{2mm}
\begin{dfn}~ 
\begin{itemize}
\item[(1)]  A $G$-equivariant $S$-module $(N,\theta)$ is called a {\it $G$-equivariant modifying (resp. maximal modifying, CT)} $S$-module if $N$ is a modifying (resp. maximal modifying, CT) $S$-module. We denote by 
\[
\catmodif_GS \quad (\mbox{resp.} \quad \catMM_G S, \quad\catCT_GS)
\]
the category of $G$-equivariant modifying (resp. MM, CT) $S$-modules.

\item[(2)] We say that a $G$-equivariant $S$-algebra $\Lambda$ is a {\it $G$-equivariant  modification algebra} (resp. {\it $G$-equivariant MMA}) of $S$ if there exists $(N,\theta)\in \catmodif_GS$ (resp. $(N,\theta)\in \catMM_G S$) such that $\Lambda\cong \bLam_N$ as $G$-equivariant $S$-algebras. A {\it $G$-equivariant NCCR} of $S$ is similarly defined. 
\end{itemize}
\end{dfn}

\vspace{2mm}
\subsection{NCCRs of invariant rings from equivariant NCCRs}~
\vspace{2mm}

We prove that the existence of an equivariant NCCR implies the existence of an NCCR of $R$ if the $G$-action is small. First, recall the following.

\begin{lem}[{\cite[Proposition 1.8]{Yos}}]\label{CM iff CM}
Let $N\in\fmod S$. Then $N\in \CM S$ if and only if $\upvarphi_*N\in \CM R$. 
\end{lem}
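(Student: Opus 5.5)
The plan is to compare depths prime by prime using local cohomology. The extension $\upvarphi\colon R\hookto S$ is integral; since $|G|$ is invertible, the Reynolds operator makes $R$ a direct summand of $S$, and $S$ is module finite over $R$ (Noether). So for $N\in\fmod S$ the $R$-module $\upvarphi_*N$ is finitely generated, and the statement makes sense.

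Fix $\p\in\Spec R$ and localize: $S_\p\defeq S\otimes_RR_\p$ is a semilocal ring. Its maximal ideals are exactly the primes $\q_1,\dots,\q_n$ of $S$ lying over $\p$, and they form a single $G$-orbit. The argument then runs in three steps.

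\textbf{Step 1: heights.} By incomparability and lying over, $\sqrt{\p S_\p}$ is the Jacobson radical $\q_1S_\p\cap\cdots\cap\q_nS_\p$. Because $G$ permutes the $\q_i$ transitively, they all have the same height. Going down holds for the integral extension $R\subseteq S$ of normal domains, which gives ${\rm ht}(\q_i)=\dim S_{\q_i}=\dim R_\p$ for every $i$.

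\textbf{Step 2: local cohomology.} Local cohomology is independent of base, and the supports are disjoint, so
\[
H^i_{\p R_\p}(N_\p)\cong H^i_{\p S_\p}(N_\p)\cong\bigoplus_{j=1}^nH^i_{\q_jS_{\q_j}}(N_{\q_j}).
\]
Taking the least $i$ with nonvanishing cohomology yields
\[
{\rm depth}_{R_\p}(\upvarphi_*N)_\p=\min_j{\rm depth}_{S_{\q_j}}N_{\q_j}.
\]

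\textbf{Step 3: conclusion.} Use the definition of $\CM$ as depth equal to $\dim$ of the local ring at every prime, with zero localizations allowed. Combining Steps 1 and 2, $(\upvarphi_*N)_\p$ is maximal CM over $R_\p$ if and only if $N_{\q_j}$ is maximal CM over $S_{\q_j}$ for all $j$. Every prime of $S$ lies over some $\p$, so letting $\p$ range over $\Spec R$ gives both implications. Zero localizations correspond on both sides, since $N_\p=0$ if and only if $N_{\q_j}=0$ for all $j$.

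The main obstacle I expect is Step 1, the equality of heights ${\rm ht}(\q)={\rm ht}(\p)$. This equality is what ensures that ``maximal'' means the same thing over $R_\p$ and over $S_{\q}$. I would get it from going down, which is available because $R=S^G$ is a normal domain and $S$ is a domain integral over it, together with transitivity of $G$ on the fibre. As an alternative check, $\dim S_\p=\dim R_\p$ because $S_\p$ is finite and torsion-free over $R_\p$.
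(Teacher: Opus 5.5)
Your proof is correct. The paper does not prove this lemma; it quotes it from \cite[Proposition 1.8]{Yos} and gives no argument of its own, so your proposal supplies the missing proof.

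You localize at $\p\in\Spec R$. Independence of base, together with the splitting of $H^i_{\p S_\p}$ over the finitely many maximal ideals $\q_jS_\p$ of the semilocal ring $S_\p$, gives $\mathrm{depth}_{R_\p}(\upvarphi_*N)_\p=\min_j\mathrm{depth}_{S_{\q_j}}N_{\q_j}$. The height equality ${\rm ht}(\q_j)={\rm ht}(\p)$ then makes ``maximal'' mean the same thing on both sides. This is exactly the depth comparison for a module-finite extension, globalized prime by prime.

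Compared with the bare citation, your argument shows what is actually used about $R=S^G$:
\begin{itemize}
\item $S$ is module-finite over $R$;
\item every prime in a fibre of $\Spec S\to\Spec R$ has the height of the prime below it.
\end{itemize}
You obtain the second point in two independent ways: going down over the normal domain $R$ combined with incomparability, or transitivity of $G$ on fibres together with $\dim S_\p=\dim R_\p$. Neither the Cohen--Macaulay property nor the canonical modules play any role. You also correctly identify that without the height equality the statement would fail.

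One small correction concerns your justification that $S$ is module-finite over $R$. This is not Noether's theorem in this generality, since $S$ need not be of finite type over $k$; the later applications take $S$ complete local. The finiteness still holds, for the following reasons:
\begin{itemize}
\item $R$ is noetherian, being an $R$-direct summand of $S$ via the Reynolds operator.
\item $R$ is normal.
\item $S$ lies in the integral closure of $R$ in the Galois, hence separable, extension $\mathrm{Frac}(S)/\mathrm{Frac}(R)$.
\item By the usual trace-form argument, that integral closure is a finitely generated $R$-module.
\end{itemize}
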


The following is the first main result that generalizes \cite[Theorem 2.11]{DITW}.

\begin{thm}\label{main 1}
Let $(N,\theta)\in\refl_GS$, and assume that the $G$-action on $S$ is small.
\begin{itemize}
\item[$(1)$] $(N,\theta)\in\catmodif_GS$ if and only if $\upvarphi_*N\in\catmodif R$. 
\item[$(2)$] $\bLam_{N}$ is a $G$-equivariant NCCR of $S$ if and only if $\bLam_{\upvarphi_*N}$ is an NCCR of $R$. 
\item[$(3)$] $(N,\theta)\in \catCT_GS$ if and only if $\upvarphi_*N\in \catCT R$.
\end{itemize}
\begin{proof}
(1) By Proposition \ref{auslander}, there is an $R$-algebra isomorphism $\bLam_{\upvarphi_*N}\cong\bLam_N\#G$. Since $\bLam_N\#G$ is a free $\bLam_N$-module by Lemma \ref{skew action free}, $\bLam_{\upvarphi_*N}\in\CM R$ if and only if $\upvarphi_*\bLam_N\in\CM R$. By Lemma \ref{CM iff CM}, $\upvarphi_*\bLam_N\in\CM R$  if and only if $N\in\catmodif S$. \\
(2) By (1) and Lemma \ref{nccr mm}\,(2), it is enough to show the equivalence
\[
\CM\bLam_N=\proj\bLam_N\quad\Longleftrightarrow\quad\CM\bLam_{\upvarphi_*N}=\proj\bLam_{\upvarphi_*N}
\]
under the assumption that $\bLam_N$ and $\bLam_{\upvarphi_*N}$ are modification algebras. By Proposition \ref{auslander}, there is an $R$-algebra isomorphism 
\[
\gamma\colon \bLam_{\upvarphi_*N}\simto\bLam_N\#G.
\]
($\Rightarrow$) Assume that $\CM\bLam_N=\proj\bLam_N$. We have $\proj\bLam_{\upvarphi_*N}\subseteq\CM\bLam_{\upvarphi_*N}$, since $\bLam_{\upvarphi_*N}\in\CM\bLam_{\upvarphi_*N}$.   For the other inclusion, let $M\in\CM\bLam_{\upvarphi_*N}$. Then $\gamma^*M\in \CM (\bLam_N\#G)$. Consider the natural $R$-algebra homomorphism
\[
\iota\colon \bLam_N\to\bLam_N\#G;\quad \lambda\mapsto \lambda t_1.
\]
Then $\iota_*\gamma^*M\in \CM S$ since $\upvarphi_*\iota_*\gamma^*M\cong \gamma^*M\in\CM R$. This means that $\iota_*\gamma^*M\in\CM\bLam_N$. By  assumption, we have $\iota_*\gamma^*M\in\proj\bLam_N$.  Take $(L,\eta)\in\fmod(G,\bLam_N)$ with $(L,\eta)^{\#}\cong \gamma^*M$. Then $L\cong \Res(^G\gamma^*M)\cong \iota_*\gamma^*M\in\proj\bLam_N$. By Corollary \ref{proj eq}, we have $\gamma^*M\cong (L,\eta)^{\#}\in\proj\bLam_N\#G$. Hence $M\in\proj\bLam_{\upvarphi_*N}$. \\
($\Leftarrow$) Assume $\CM\bLam_{\upvarphi_*N}=\proj\bLam_{\upvarphi_*N}$. It is enough to show that $\CM\bLam_N\subseteq\proj\bLam_N$. Let $L\in \CM\bLam_N$. Then $L$ is a direct summand of $\Res\Ind(L)\in \CM\bLam_N$. Thus we only need to show that $\Res\Ind(L)\in\proj\bLam_N$. Since $\Res\Ind(L)\cong\Ind(L)^{\#}$ as $R$-modules, we see that $\Ind(L)^{\#}\in\CM(\bLam_N\#G)$. Hence $\gamma_*(\Ind(L)^{\#})\in \CM\bLam_{\upvarphi_*N}$. By assumption, $\gamma_*(\Ind(L)^{\#})\in\proj\bLam_{\upvarphi_*N}$, and so $\Ind(L)^{\#}\in\proj(\bLam_N\#G)$. Again by Corollary \ref{proj eq}, $\Res\Ind(L)\in\proj\bLam_N$. This completes the proof.\\
(3) Assume that $(N,\theta)\in\catCT_GS$. Since $|G|$ is invertible in $S$, the natural inclusion $R\hookto S$ splits via the Reynolds operator 
\[
S\to R;\quad s\mapsto \frac{1}{|G|}\sum_{g\in G}\upsigma_g(s).
\]
In particular,  $R$ is a direct summand of the $R$-module $S$. 
By (2) and Proposition \ref{char ct}, we have $\upvarphi_*N\in\catCT R$.

Conversely, assume that $\upvarphi_*N\in\catCT R$. By Lemma \ref{CM iff CM}, $N\in\CM S$. Since $\upvarphi_*N$ is modifying, so is $N$ by (1). Thus  the implications
\begin{align*}
L\in\add N&\Rightarrow \Hom(N,L)\in\CM S\\
L\in\add N&\Rightarrow \Hom(L,N)\in\CM S
\end{align*}
are automatic. It suffices to show the converses. First, note that we may assume that $L$ is equivariantizable. Indeed, since there are $S$-module  isomorphisms $\theta_g\colon \upsigma^*_gN\simto N$ for all $g\in G$, we have 
$
\add \Bigl(\bigoplus_{g\in G}\upsigma_g^*N\Bigr)=\add N,
$
 and $(\upsigma_g)_*N\cong(\upsigma_g)_*\upsigma_g^*N\cong N$.  Thus
 \[
 L\in\add N\Longleftrightarrow \bigoplus_{g\in G}\upsigma_g^*L\in\add N,
 \]
 and the equivalences
\begin{align*}
\Hom(\upsigma_g^*L,N)\in\CM S\quad&\Longleftrightarrow\quad \Hom(L,N)\in\CM S \\ \Hom(N,\upsigma_g^*L)\in\CM S\quad&\Longleftrightarrow\quad \Hom(N,L)\in \CM S
\end{align*}
hold for all $g\in G$. 
Therefore, we may replace $L$ by $\bigoplus_{g\in G}\upsigma_g^*L\cong \Res\Ind(L)$, which is $G$-equivariantizable. 

Now assume $\Hom(N,L)\in\CM S$ for some $(L,\theta)\in\refl_GS$. Then there are $R$-module isomorphisms
\begin{align*}
\Hom_R(\upvarphi_*N,\upvarphi_*L)&\cong\upvarphi_*\Hom_S(\upvarphi^*\upvarphi_*N,L)\\
&\cong \upvarphi_*\Hom_S((\upvarphi^*\upvarphi_*N)^{**},L)\\
&\cong \upvarphi_*\Hom_S\Bigl(\bigoplus_{g\in G}\upsigma^*_gN,L\Bigr)\\
&\cong\upvarphi_*\Hom_S(N,L)^{\oplus |G|},
\end{align*}
where the second isomorphism follows since $L$ is reflexive, the third one follows from Lemma \ref{varphi ind}, and the last one follows since there are isomorphisms $\theta_g\colon \upsigma^*_gN\simto N$ for all $g\in G$. Therefore, by Lemma \ref{CM iff CM}, we have $\Hom_R(\upvarphi_*N,\upvarphi_*L)\in\CM R$. By assumption, $\upvarphi_*L\in\add \upvarphi_*N$. Then $(\upvarphi^*\upvarphi_*L)^{**}\in \add (\upvarphi^*\upvarphi_*N)^{**}$, which is equivalent to 
\[
\bigoplus_{g\in G}\upsigma_g^*L\in \add \Bigl(\bigoplus_{g\in G}\upsigma_g^*N\Bigr)=\add N.
\]
Hence $L\in \add N$. This proves the implication $\Hom(N,L)\in\CM S\Rightarrow L\in\add N$. The identical argument works for the other implication, since $L$ is also equivariantizable. 
\end{proof}
\end{thm}

\begin{rem}
In Theorem \ref{main 1}, the assumption of smallness is necessary. For example, let $S\defeq\C[x,y,z,w]/\l xy-zw\r$, and consider $G=\frac{1}{2}(1,0,0,1)$.  Then, the invariant ring
\[
R\cong \C[X,Y,Z,W,V]/\l XW-V^2,XY-ZV,YV-ZW\r
\] 
is a toric ring, and it is easy to check that $R$ is not $\Q$-Gorenstein. 
Since $\Spec S$ contains non-free locus $\{x=w=0\}\cong\C^2$, the $G$-action on $S$ is not small. 
Although $M\defeq S\oplus\l x,z\r\in\refl S$, which is $G$-equivariantizable, gives an NCCR of $S$, the ring $R$ has no NCCR by \cite[Theorem 1.2]{DITW}.
\end{rem}

The following is a version of \cite[Theorem 5.7]{IW14a}, where it is assumed that the characteristic of the base field is zero and $S$ is a polynomial ring over a field. The following version drops the assumption of the characteristic of a base field, but assumes the smallness of $G$. 
\begin{cor}
If the $G$-action is small and $S$ is regular, then $\upvarphi_*S$ is a CT $R$-module.
\begin{proof}
By Proposition \ref{char ct}, we have  $S\in \catCT S$. Since $S$ has a canonical $G$-equivariant structure, $\upvarphi_*S\in\catCT R$ by  Theorem \ref{main 1}\,(3).
\end{proof}
\end{cor}

 Next, consider the set of additive equivalence classes
\[
\mbox{$G$-}\modif S\defeq\{N\in \catmodif S\,|\, \mbox{$N$ is $G$-equivariantizable}\}/\overset{\add}{\sim}
\]
of $G$-equivariantizable modifying $S$-modules. Similarly, we  denote by 
\[
\mbox{$G$-}\NCCR S\quad (\mbox{resp. }  \quad\mbox{$G$-}\CT S)
\] 
the set of additive equivalence classes of $G$-equivariantizable $S$-modules giving an NCCR (resp.   $G$-equivariantizable CT $S$-modules).  

\begin{cor}\label{modif inj}
Assume that the $G$-action on $S$ is small. 
The functor $\upvarphi_*\colon\fmod S\to\fmod R$ induces injective maps
\[
\mbox{$G$-}\modif S\hookto \modif R, \quad \mbox{$G$-}\NCCR S\hookto \NCCR R\quad\mbox{and}\quad\mbox{$G$-}\CT S\hookto \CT R.
\]
\begin{proof}
 By Theorem \ref{main 1}, it is enough to show that the map $\upvarphi_*\colon\mbox{$G$-}\modif S\to \modif R$ is injective. For $N,N'\in \mbox{$G$-}\modif S$, assume that $\add(\upvarphi_*N)=\add(\upvarphi_*N')$. By Lemma \ref{varphi ind}, this implies that $\add(\Ind N)=\add(\Ind N')$. Thus 
\[
\add N=\add\Bigl(\bigoplus_{g\in G}\upsigma^*_gN\Bigr)=\add\Bigl(\bigoplus_{g\in G}\upsigma^*_gN'\Bigr)=\add N'.
\]
Hence $N=N'$ in the set $\mbox{$G$-}\modif S$.
\end{proof}
\end{cor}

A similar statement for MM modules might be more subtle, and we only prove the MM version in a special case.  For this, we need the following. 

\begin{prop}[{\cite[Proposition 5.11]{IW14a}}]\label{3d MM}
Assume that $S$ is 3-dimensional Gorenstein and admits an NCCR. For $N\in\refl S$, $\bLam_N$ is an NCCR of $S$ if and only if $N\in \catMM S$.
\end{prop}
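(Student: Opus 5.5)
The forward direction is immediate: if $\bLam_N$ is an NCCR of $S$, then $N\in\catMM S$ by Lemma \ref{nccr mm}\,(1). The work is in the converse, so assume $N\in\catMM S$ and fix $M\in\refl S$ giving an NCCR $\bLam_M$ of $S$. The plan is to show that $\bLam_N$ is derived equivalent to $\bLam_M$ through a tilting module. We then read off from this derived equivalence that $\bLam_N$ is non-singular.

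Step 1 (the reflexive equivalence). By \cite[Proposition 2.4]{IR08}, the functor $\Hom_S(M,-)\colon\refl S\to\refl\bLam_M$ is an equivalence, since $M$ is a height one progenerator. It sends $N$ to $T\defeq\Hom_S(M,N)$, and $\End_{\bLam_M}(T)\cong\bLam_N$.

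Step 2 ($T$ has projective dimension at most one). This is where $\dim S=3$ and the Gorenstein hypothesis enter, and I expect it to be the main technical point. First, because $N$ is modifying and $\bLam_M$ is an NCCR, one shows that $T\in\CM S$. The idea is to use depth counting on $\Hom_S(M\oplus N,M\oplus N)$, as in \cite{IW14a}. Next, take a projective presentation $P_1\to P_0\to T\to 0$ over $\bLam_M$ and let $K$ be its kernel. Since $\dim S=3$ and $T$ is CM, the depth lemma applied to $0\to K\to P_1\to P_0\to T\to0$ makes the second syzygy $\Omega^2T$ CM over $\bLam_M$, and hence projective by Lemma \ref{nccr mm}\,(2). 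A dimension-shift argument, using that $\Ext^i_{\bLam_M}(T,-)$ vanishes appropriately for CM modules over the $3$-CY-type algebra $\bLam_M$ (here the Gorenstein hypothesis is used), lowers this to $\prdim T\le 1$. The vanishing $\Ext^1_{\bLam_M}(T,T)=0$ is obtained the same way, via the $3$-Calabi--Yau duality of the NCCR $\bLam_M$ and the fact that $\End(T)\cong\bLam_N\in\CM S$.

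Step 3 (completion and maximality). Apply a Bongartz-type completion: take a universal extension $0\to\bLam_M\to E\to T^{\oplus m}\to0$. Then $T\oplus E$ is a tilting $\bLam_M$-module. By the reflexive equivalence, $E\cong\Hom_S(M,L)$ for some $L\in\refl S$. Since $\End_{\bLam_M}(T\oplus E)$ is derived equivalent to the NCCR $\bLam_M$, it is CM (the CM property is preserved by such derived equivalences in the CY setting). Hence $N\oplus L\in\catmodif S$. Maximality of $N$ then gives $L\in\add N$, so $T$ itself is a tilting $\bLam_M$-module.

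Step 4 (conclusion). Since $T$ is tilting, $\bLam_N\cong\End_{\bLam_M}(T)$ is derived equivalent to $\bLam_M$, so $\gldim\bLam_N<\infty$. The same holds after localizing at every $\p$. A modification algebra of finite global dimension over an equi-codimensional Gorenstein ring is non-singular, which one can check via Lemma \ref{nccr mm}\,(2) together with Auslander--Buchsbaum. Therefore $\bLam_N$ is an NCCR.
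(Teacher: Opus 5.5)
There is a genuine error in Step 2. The claim that $T=\Hom_S(M,N)\in\CM S$ is false in general. Also, $M\oplus N$ need not be modifying, so there is no depth count on $\End(M\oplus N)$ to run. To see the claim fails, suppose $T$ were CM. Then $T\in\CM\bLam_M=\proj\bLam_M$ by Lemma \ref{nccr mm}\,(2), so $N\in\add M$ by the reflexive equivalence of Step 1. Maximality of $N$ (Lemma \ref{poset mm}) would then give $\add N=\add M$, so every MM $S$-module would be additively equivalent to $M$. This fails for the conifold $S=\C[\![x,y,z,w]\!]/\l xy-zw\r$ of Example \ref{ex no nccr}, which is Gorenstein, has an NCCR, and is not MM-finite. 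Concretely, $\Hom(S\oplus L,\,S\oplus L^{[-1]})$ has the non-CM summand $L^{[-2]}$. Your syzygy and dimension-shift discussion would also be superfluous on its own terms, since a CM module over an NCCR is already projective.

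The conclusion $\prdim T\le 1$ is nonetheless true, for a simpler reason that uses only $\dim S=3$ and non-singularity. Since $T$ is reflexive, the depth lemma applied to $0\to\Omega T\to P_0\to T\to 0$ gives $\Omega T\in\CM\bLam_M=\proj\bLam_M$. For $\Ext^1_{\bLam_M}(T,T)=0$, argue as in the proof of Theorem \ref{Q-CY tilt}\,(1). For $\operatorname{ht}\p\le 2$ the module $T_{\p}$ is CM, hence projective over the NCCR $\bLam_{M_{\p}}$, so $\Ext^1(T,T)$ has finite length. Then $\fl\Ext^1(T,T)=0$ by \cite[Lemma 2.7]{IW14a}, since $\End_{\bLam_M}(T)\cong\bLam_N\in\CM S$ and $\bLam_M$ is Gorenstein. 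This is where the Gorenstein hypothesis actually enters, not in the projective dimension bound.

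With Step 2 repaired, Steps 3--4 go through. They differ from the standard route only in how condition (3) of Definition \ref{tilt def} is obtained. Your Bongartz completion is \cite[Lemma 4.13]{IW14a}: $E$ is an extension of reflexive modules, hence reflexive, so $E\cong\Hom(M,L)$. To get $N\oplus L\in\catmodif S$ you then need an external input: the endomorphism ring of a tilting module over the $3$-sCY algebra $\bLam_M$ is again $3$-sCY, hence CM \cite{IR08}.

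In the paper's framework the shorter route is Theorem \ref{Q-CY tilt}\,(2) with $r=1$, the generalization of \cite[Proposition 4.14]{IW14a}. There $\bLam_M$ and $\bLam_N$ are Gorenstein by Lemmas \ref{add lem} and \ref{GM MMA}. The sequence $0\to\bLam_M\to\Hom(M,N_0)\to\Hom(M,N_1)\to 0$ comes directly from maximality of $N^*$ via the approximation Lemma \ref{GM lemma}, followed by dualizing. That route needs no Calabi--Yau derived invariance, which is why it survives in the $\Q$-Gorenstein setting. Yours uses maximality of $N$ only to absorb the Bongartz complement. Both then finish as in your Step 4.
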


The following is an MM version of Corollary \ref{modif inj}.

\begin{cor}\label{mm descent} 
Assume that the $G$-action on $S$ is small and  that $S$ is a 3-dimensional Gorenstein ring having an NCCR. Then the functor $\upvarphi_*\colon\fmod S\to\fmod R$ induces an injective map
\[
G\text{-}\modif S\cap\MM S\hookto \MM R.
\]
\begin{proof}
Let $(N,\theta)\in\catMM_GS$. By Proposition \ref{3d MM},  $\bLam_N$ is an NCCR of $S$, and so   $\bLam_{\upvarphi_*N}$ is an NCCR of $R$ by Theorem \ref{main 1}. Thus $\upvarphi_*N\in \catMM R$ by Lemma \ref{nccr mm}\,(1). 
\end{proof}
\end{cor}

\vspace{2mm}
\subsection{$G$-maximal modification algebras}\label{eq MMA section}~
\vspace{2mm}

In this section, we always assume that  the $G$-action on $S$ is small. Then we have a functor 
\[
\upvarphi_*\colon \catmodif_GS\to \catmodif R;\quad (N,\theta)\mapsto \upvarphi_*N
\]
by Theorem \ref{main 1}. We say that $\upvarphi_*\colon \catmodif_GS\to \catmodif R$ is {\it dense} if for any $M\in\catmodif R$, there exists $(N,\theta)\in\catmodif_GS$ such that $M$ is a direct summand of $\upvarphi_*N$. 

We consider a similar problem to the previous section for MM modules. To this end, we introduce the following.

\begin{dfn}
A  $G$-equivariantizable modifying $S$-module $N$ is {\it $G$-maximal modifying} ({\it $G$-MM}) if the  implication 
\[
N\oplus N'\in\catmodif S \quad\Longrightarrow\quad N'\in\add N
\]
holds for every $N'\in \mbox{$G$-}\modif S$.
We denote by 
\[
\mbox{$G$-}\MM S\quad(\subseteq \mbox{$G$-}\modif S)
\]
 the set of additive equivalence classes of $G$-maximal modifying $S$-modules.
\end{dfn}

\begin{rem}
It is obvious that a $G$-equivariantizable MM $S$-module is a $G$-MM $S$-module. However, the converse does not hold in general. Moreover, even if $S$ has a $G$-MM module, it could happen that $S$ has no $G$-equivariant MM module (see Example \ref{ex no nccr}). 
\end{rem}

\begin{lem}\label{direct summand lem}
Let $M\in\refl R$. Then $M$ is a direct summand of $\upvarphi_*((\upvarphi^*M)^{**})$. 
\begin{proof}
By Proposition \ref{inv refl}, $((\upvarphi^*M)^{**})^G\cong M$. Since $|G|$ is invertible in $S$, we have a Reynolds operator. Thus  $((\upvarphi^*M)^{**})^G$ is a direct summand of $\upvarphi_*((\upvarphi^*M)^{**})\in\refl R$. 
\end{proof}
\end{lem}

\begin{thm}\label{main 2}
Assume that $\upvarphi_*\colon \catmodif_GS\to \catmodif R$ is  dense. 
\begin{itemize}
\item[$(1)$] If $N\in\refl S$ is $G$-maximal modifying, then $\upvarphi_*N\in\catMM R$.
\item[$(2)$] For every $M\in\catMM R$, there is $N\in G\text{-}\MM S$ such that $\add(\upvarphi_*N)=\add M$. 
\item[$(3)$] For every $M\in\catmodif R$ giving an NCCR of $R$, there is $N\in G\text{-}\modif S$  such that $N$ gives an NCCR of $S$ and $\add(\upvarphi_*N)=\add M$.
\item[$(4)$] For every $M\in \catCT R$, there is $(N,\theta)\in \catCT_GS$ such that $\add(\upvarphi_*N)=\add M$. 
\end{itemize}
\begin{proof}
(1) Let $N\in G\text{-}\MM S$ and $M\in\catmodif R$ such that $\upvarphi_*N\oplus M\in\catmodif R$. By assumption, there is $N'\in G\text{-}\modif S$ such that $\upvarphi_*N\oplus M$ is a direct summand of $\upvarphi_*N'\in\catmodif R$.  Then $\bigl(\upvarphi^*(\upvarphi_*N\oplus M)\bigr)^{**}$ is a direct summand of $(\upvarphi^*\upvarphi_*N')^{**}$. By Lemma \ref{varphi ind}, we have $(\upvarphi^*\upvarphi_*N')^{**}\cong \Ind(N')\in\catmodif_GS$. Thus $(\upvarphi^*(\upvarphi_*N\oplus M))^{**}\cong \Ind(N)\oplus(\upvarphi^*M)^{**}$ is also modifying. In particular, $N\oplus(\upvarphi^*M)^{**}\in\catmodif S$. Since $N$ is $G$-MM, we have $(\upvarphi^*M)^{**}\in\add N$.  Thus, by Lemma \ref{direct summand lem},  $M\in \add\upvarphi_*((\upvarphi^*M)^{**})\subseteq\add\upvarphi_*N$. Hence $\upvarphi_*N\in\catMM R$.\vspace{1mm}\\
(2) Let $M\in \catMM R$. By assumption, there is a $G$-equivariantizable $N\in \catmodif S$ such that $M\in\add(\upvarphi_*N)$. Then $M\leq \upvarphi_*N$ in the poset $\modif R$. By Lemma \ref{poset mm},  we have $\add(M)=\add(\upvarphi_*N)$. It suffices to prove that $N$ is $G$-MM. Let $L\in G\text{-}\modif S$ such that $N\oplus L\in\catmodif S$. Then $\upvarphi_*(N\oplus L)\in\catmodif R$ by Theorem \ref{main 1}. Since $\upvarphi_*N$ is also MM, we have $\upvarphi_*L\in\add(\upvarphi_*N)$. Therefore, it follows from Lemma \ref{varphi ind} that $L\in\add(L)=\add\bigl((\upvarphi^*\upvarphi_*L)^{**}\bigr)\subseteq \add\bigl((\upvarphi^*\upvarphi_*N)^{**}\bigr)=\add N$. This proves that $N$ is $G$-MM. 
\vspace{1mm}\\
(3)  Let $M\in\refl R$, and assume that $\bLam_M$ is an NCCR of $R$. Then $M\in\catMM R$ by Lemma \ref{nccr mm}\,(1).  Take $(N,\theta)\in\catmodif_GS$ such that $M\in \add(\upvarphi_* N)$.   Then we have $\add M=\add(\upvarphi_*N)$. Since $\bLam_M$ and $\bLam_{\upvarphi_*N}$ are Morita equivalent, $\bLam_{\upvarphi_*N}$ is also an NCCR of $R$.  By Theorem \ref{main 1}\,(2), $N$ gives an NCCR of $S$. \vspace{1mm}\\
(4) Let $M\in\catCT R$. By Proposition \ref{char ct}, we have $M\in \catMM R$. Thus by (3), there is $(N,\theta)\in\catmodif_GS$ such that $N$ gives an NCCR of $S$ and $\add M=\add(\upvarphi_*N)$. Then $\upvarphi_*N$ is also a CT $R$-module, and so $(N,\theta)\in\catCT_G S$ by Theorem \ref{main 1}\,(3).
\end{proof}
\end{thm}

By Corollary \ref{modif inj} and Theorem \ref{main 2}, we obtain the following.

\begin{cor}\label{mm descent 2} 
Assume that $\upvarphi_*\colon \catmodif_GS\to \catmodif R$ is  dense. 
The functor $\upvarphi_*\colon\fmod S\to\fmod R$ induces a bijective map
\[
\upvarphi_*\colon\mbox{$G$-}\MM S\simto \MM R.
\]
If $S$ admits an NCCR, there are bijective maps
\[
\upvarphi_*\colon\mbox{$G$-}\NCCR S\simto \NCCR R\quad\mbox{and}\quad\upvarphi_*\colon\mbox{$G$-}\CT S\simto \CT R.
\]
\end{cor}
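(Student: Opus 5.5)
The plan is to assemble the corollary from Corollary \ref{modif inj} and Theorem \ref{main 2}, in three steps: well-definedness, injectivity, and surjectivity. Smallness of the $G$-action is a standing assumption in this section, so Theorem \ref{main 1} and Corollary \ref{modif inj} are available throughout.

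\emph{Well-definedness.} Since $\add N=\add N'$ implies $\add\upvarphi_*N=\add\upvarphi_*N'$, the assignment $N\mapsto\upvarphi_*N$ descends to additive equivalence classes. Theorem \ref{main 2}\,(1) shows that $\upvarphi_*N\in\catMM R$ for every $G$-MM module $N$. Hence $\upvarphi_*$ gives a well-defined map $G\text{-}\MM S\to\MM R$.

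\emph{Injectivity.} This map is the restriction of the injective map $G\text{-}\modif S\hookto\modif R$ of Corollary \ref{modif inj} to the subset $G\text{-}\MM S\subseteq G\text{-}\modif S$, so it is injective.

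\emph{Surjectivity.} Theorem \ref{main 2}\,(2) is exactly the statement that every class in $\MM R$ is hit by a class in $G\text{-}\MM S$.

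For the NCCR and CT bijections, injectivity is again supplied by Corollary \ref{modif inj}, and surjectivity by Theorem \ref{main 2}\,(3) and (4). For $\NCCR$, the module $N$ produced in (3) is $G$-equivariantizable, gives an NCCR of $S$, and satisfies $\add\upvarphi_*N=\add M$. For $\CT$, part (4) produces $(N,\theta)\in\catCT_GS$ with the same property. Well-definedness into $\NCCR R$ and $\CT R$ is Theorem \ref{main 1}\,(2),(3).

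The one point needing care is the role of the hypothesis ``$S$ admits an NCCR''. Parts (3) and (4) of Theorem \ref{main 2} as stated do not use it, only density. I would therefore argue directly that, under density, the maps are surjective, and if $\NCCR R$ is empty both sides are empty, since injectivity forces $G\text{-}\NCCR S$ to be empty as well. The hypothesis is then harmless, serving only to ensure the sets are meaningful or nonempty. The main thing to check is that no step silently requires the existence of NCCRs.
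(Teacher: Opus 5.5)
Your proposal is correct and is essentially the paper's argument: injectivity comes from restricting the injective maps of Corollary \ref{modif inj}, and well-definedness and surjectivity come from Theorem \ref{main 2}\,(1)--(4). You are also right that the hypothesis that $S$ admits an NCCR is never used. One small correction: when $\NCCR R=\emptyset$, what forces $G\text{-}\NCCR S=\emptyset$ is the existence of the map from Theorem \ref{main 1}\,(2), not injectivity, and the hypothesis does not guarantee nonemptiness either (Example \ref{ex no nccr}).
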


\begin{dfn}
We say that $S$ is {\it $G$-MM-finite} (resp. {\it $G$-MM-simple}) if $G\text{-}\MM S$ is a non-empty finite set (resp. has a unique element). 
\end{dfn}

The following immediately follows from Corollary \ref{mm descent 2}. 

\begin{cor}
Assume that  $\upvarphi_*\colon \catmodif_GS\to \catmodif R$ is  dense. Then $S$ is $G$-MM-finite (resp. $G$-MM-simple) if and only if $R$ is MM-finite (resp. MM-simple). 
\end{cor}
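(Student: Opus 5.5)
The plan is to reduce the statement directly to Corollary \ref{mm descent 2}. The hypothesis there is the same density of $\upvarphi_*\colon \catmodif_GS\to \catmodif R$, and it gives a bijection $\upvarphi_*\colon G\text{-}\MM S\simto \MM R$ of sets of additive equivalence classes. Since the notions of $G$-MM-finite and $G$-MM-simple, and of MM-finite and MM-simple, are defined purely in terms of these sets, the corollary should follow by pure cardinality bookkeeping.

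Concretely, I will argue in three steps.

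First, I record that a bijection of sets preserves three properties: being nonempty, being finite, and having exactly one element.

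Second, I apply this to $\upvarphi_*\colon G\text{-}\MM S\to\MM R$.
\begin{itemize}
\item[(a)] $G\text{-}\MM S$ is nonempty and finite exactly when $\MM R$ is. This is the $G$-MM-finite $\Leftrightarrow$ MM-finite equivalence.
\item[(b)] $G\text{-}\MM S$ is a singleton exactly when $\MM R$ is. This is the $G$-MM-simple $\Leftrightarrow$ MM-simple equivalence.
\end{itemize}

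Third, I check that the map in Corollary \ref{mm descent 2} really is well defined on additive equivalence classes, so that cardinalities can be compared. If $\add N=\add N'$, then $\add\upvarphi_*N=\add\upvarphi_*N'$, because $\upvarphi_*$ is additive and preserves direct summands. The corollary already asserts this, so nothing new is needed.

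There is no genuine obstacle. The only point requiring care is the one in the third step: the definitions of MM-finite and $G$-MM-finite count classes rather than modules, and the bijection of Corollary \ref{mm descent 2} is stated at exactly that level, so the comparison is between the right sets.
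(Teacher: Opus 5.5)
Your proposal is correct and is exactly the paper's argument: the paper states this corollary as an immediate consequence of the bijection $\upvarphi_*\colon G\text{-}\MM S\simto \MM R$ from Corollary~\ref{mm descent 2}, and a bijection of sets preserves being nonempty and finite and being a singleton. Your extra check that the map is well defined on additive equivalence classes is harmless, but it is already built into that corollary.
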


\vspace{2mm}
\section{Applications to canonical covers}

In this section, we apply the results  in the previous section to canonical covers.
Throughout this section,  $R$ is a Cohen--Macaulay normal domain containing a field $k$ of characteristic zero, and we assume that there is a canonical $R$-module $\omega_R$.

For $M,N\in\refl R$, we define the {\it reflexive product}
\[
M\rp N\defeq(M\otimes_RN)^{**}.
\]
The following is a summary of basic properties of reflexive products.

\begin{lem}
Let $L,M,N\in\refl R$.
\begin{itemize}
\item[$(1)$] $L\rp (M\rp N)\cong(L\rp M)\rp N$.
\item[$(2)$] $\Hom(L\rp M,N)\cong \Hom(L,\Hom(M,N))$.
\item[$(3)$] $\Hom(M,N)\cong M^*\rp N$.
\end{itemize}
\end{lem}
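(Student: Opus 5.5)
The plan is to prove all three items with the same tool: for reflexive modules $X,Y$ over a normal domain, a homomorphism of reflexive modules that is an isomorphism in codimension one is an isomorphism. Concretely, I will compare natural maps between reflexive modules and check that they are isomorphisms after localizing at every height one prime $\p$ of $R$. Since $R$ is normal, $R_{\p}$ is a DVR. There every finitely generated reflexive module is free, so the double dual becomes the identity and reflexive products reduce to ordinary tensor products.

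For (1), I start from the natural map $L\otimes_R(M\otimes_R N)\to L\rp(M\rp N)$, using that $M\otimes N\to M\rp N$ is the canonical map to the double dual. The tensor associativity isomorphism then gives a map $(L\otimes M)\otimes N\to L\rp(M\rp N)$. Since the target is reflexive, this map factors uniquely through $((L\otimes M)\otimes N)^{**}$. I also check that $((L\otimes M)\otimes N)^{**}\cong((L\rp M)\otimes N)^{**}$; the cleanest way is again the codimension-one criterion, because $L\otimes M\to L\rp M$ has kernel and cokernel supported in codimension $\ge 2$ (they vanish at height one primes, where everything is free). The result is a natural map between reflexive modules that is an isomorphism at every height one prime, hence an isomorphism.

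For (2), I use tensor--hom adjunction, $\Hom(L\otimes M,N)\cong\Hom(L,\Hom(M,N))$. Because $N$ is reflexive, so is $\Hom(X,N)$ for any finitely generated $X$. Moreover, precomposition with $X\to X^{**}$ induces an isomorphism $\Hom(X^{**},N)\to\Hom(X,N)$: both sides are reflexive, and the map is an isomorphism in codimension one since $X\to X^{**}$ is an isomorphism modulo torsion there and $N$ is torsion free. Applying this with $X=L\otimes M$ gives (2).

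For (3), I write down the natural evaluation map $M^*\otimes N\to\Hom(M,N)$. Its target is reflexive, so the map extends to $M^*\rp N\to\Hom(M,N)$. At a height one prime $M_{\p}$ is free, so the map is the standard isomorphism there, and the criterion finishes the argument.

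The main obstacle is only bookkeeping: verifying that the isomorphism-in-codimension-one criterion applies. For that, both source and target must be reflexive, and the map must be an isomorphism at all primes of height $\le 1$. At height zero we are over the fraction field, and the statements are trivial there.
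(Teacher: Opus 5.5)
Your proposal is correct. The paper states this lemma without proof, as a list of standard facts about reflexive products, so there is no argument of the paper's to compare against. Your argument is the standard verification: reduce to height-one primes, where $R_{\p}$ is a DVR and the three maps become the usual associativity, tensor--hom adjunction and evaluation isomorphisms of free modules. Then invoke the fact that a homomorphism of reflexive modules over a normal domain which is an isomorphism in codimension one is an isomorphism.

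The one point you use without stating it is that $(-)^*$, $(-)^{**}$ and $\Hom$ commute with localization for finitely generated modules over the noetherian ring $R$. This is what lets you identify the localized maps at $\p$ with those standard isomorphisms.
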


For $M\in\refl R$, we put $M^{[0]}\defeq R$ and, for $n>0$, we set
\[
M^{[n]}\defeq \underbrace{M\rp M\rp\,\,\cdots\,\,\rp M}_{n}.
\]
\begin{dfn}
 $R$ is {\it $\Q$-Gorenstein} (resp. {\it $\Q$-Calabi--Yau}, or {\it $\Q$-CY}) {\it of index $r$} if 
\[
r=\min\left\{\,n>0\,\,\middle|\,\, \omega_R^{[n]}\text{ is projective (resp. free) for some canonical module $\omega_R$}\right\}.
\]
\end{dfn}

\begin{rem}
If $R$ is local, $R$ is $\Q$-Gorenstein of index $r$ if and only if it is $\Q$-Calabi--Yau of index $r$.
\end{rem}

In the rest of this section, we assume that $R$ is $\Q$-CY of index $r$ and that $k$ contains a primitive $r$-th root of unity $\varepsilon$, and  we fix a canonical module $\omega_R$ such that $\omega_R^{[r]}\cong R$. Set $\upmu_r\defeq\l\varepsilon\r\subseteq k^{\times}$. We define an $R$-algebra
\[
S\defeq\bigoplus_{i=0}^{r-1}\omega^{[i]}_R,
\]
where the ring structure of $S$ is induced from  a choice of an isomorphism $\omega^{[r]}_R\cong R$.
The ring $S$ is called a {\it canonical cover} of $R$.  A canonical cover $S$ has a natural  $\upmu_r$-action defined by 
\[
\upsigma_{\varepsilon}\colon S\simto S;\quad \sum_{i=0}^{r-1} x_i\mapsto \sum_{i=0}^{r-1} \varepsilon^ix_i, \quad (x_i\in\omega^{[i]}_R).
\]
 Then $S^{\upmu_r}=R$.
\begin{prop}\label{prop:canonical cover}
We have the following:
\begin{itemize}
\item[$(1)$] $S$ is a normal domain.
\item[$(2)$] $S$ is Gorenstein if and only if $\omega^{[i]}_R\in\CM R$ for $1\leq i\leq r$.
\item[$(3)$] The $\upmu_r$-action on $S$ is small.
\end{itemize}
\begin{proof}
(1) and (2) follow from \cite[Proposition 2.15]{DITW}. \\
(3) Set $V\defeq \{ \p\in\Spec R\,|\,(\omega_R)_{\p}\in\proj R_{\p}\}$, and set $U\defeq \upvarphi^{-1}(V)\subseteq \Spec S$.  Note that $V$ is open in $\Spec R$. Then the sheaf $\omega_R|_V\cong\omega_V$ is invertible, and we have $(\omega_R^{[i]}\otimes\omega^{[j]}_R)|_V\cong (\omega^{[i+j]}_R)_V$. Hence,  the restriction $\upvarphi|_U\colon U\to V$ is a principal $\upmu_r$-bundle by \cite[Lemma 15.7]{Has}.
Since all height one prime ideals lie in $V$, we have $\codim(\Spec R\backslash V)\geq2$. Therefore, $\upvarphi$ is an almost principal $\upmu_r$-bundle, and thus the result follows from Lemma \ref{small principal}.
\end{proof}
\end{prop}

Denote by
\[
(-)^{\vee}\defeq\Hom(-,\omega_R)\colon \refl R\simto\refl R
\]
the canonical duality which restricts to a duality $(-)^{\vee}\colon\CM R\simto\CM R$. Composing this with the $R$-dual, we define the {\it Nakayama functor} 
\[
\upnu\defeq(-)^{\vee}\circ(-)^*\colon\refl R\simto \refl R.
\]
Note that there is an isomorphism $\upnu(M)\cong M\rp\omega_R$ for each $M\in\refl R$.

\begin{dfn} Here we only assume that $R$ is a Cohen--Macaulay normal domain. Let $M\in\catmodif R$.
\begin{itemize}
\item[(1)] For a family $\cF=\{F\colon \refl R\simto \refl R\}$  of  additive autoequivalences, we say that $M$ is {\it right (resp. left) $\cF$-stable} if 
\[
\Hom_R(M,F(M))\in\CM R \quad(\mbox{resp.}\quad \Hom(F(M),M)\in\CM R)\]
 for all $F\in\cF$. Moreover, $M$ is said to be {\it $\cF$-stable} if it is right and left $\cF$-stable.
\item[(2)]
Assume that $R$ is $\Q$-Gorenstein of index $r$, and set $\cN\defeq\{\upnu, \upnu^2,\hdots,\upnu^{r-1}\}$. Then, we say that $M$ is {\it Nakayama-stable} if it is $\cN$-stable. 
\end{itemize}
\end{dfn}

\begin{lem}\label{pull back modif}
Assume that $S$ is Gorenstein. Let $M\in\catmodif R$. The following are equivalent.
\begin{itemize}
\item[$(1)$] $M$ is Nakayama-stable. 
\item[$(2)$] $(\upvarphi^*M)^{**}\in\catmodif_{\upmu_r}S$.
\item[$(3)$] There exists $N\in\catmodif _{\upmu_r}S$ such that $M\in\add (\upvarphi_*N)$.
\end{itemize}
\begin{proof}
(1)$\Leftrightarrow$(2) For $(N,\theta)\defeq(\upvarphi^*M)^{**}\in\refl_{\upmu_r}S$, the condition (2) is equivalent to 
\[
\upvarphi_*N\in\catmodif R
\]
by Theorem \ref{main 1}. There are isomorphisms of $R$-modules
\[
\upvarphi_*N\cong \upvarphi_*\Hom_R(S,M)\cong \Hom_R(\upvarphi_*S,M)\cong\Hom_R\Bigl(\bigoplus_{i=0}^{r-1}\omega^{[i]}_R,M\Bigr)\cong \bigoplus_{i=0}^{r-1}\bigl(\omega^{[i]}_R\rp M\bigr).
\]
Thus $\upvarphi_*N\in\catmodif R$ if and only if $M$ is Nakayama-stable since $\upnu^i(M)\cong M\cdot \omega^{[i]}_R$.\\
(2)$\Rightarrow$(3) This follows from Lemma \ref{direct summand lem}.\\
(3)$\Rightarrow$(2)  Assume that there exists $N\in\catmodif _{\upmu_r}S$ such that $M\in\add (\upvarphi_*N)$. Then $(\upvarphi^*M)^{**}\in\add (\upvarphi^*\upvarphi_*N)^{**}=\add N$, where the equality follows from Lemma \ref{varphi ind}. Since $N$ is modifying, so is $(\upvarphi^*M)^{**}$. 
\end{proof}
\end{lem}

\begin{thm}\label{mm r cover}
 Assume that $S$ is Gorenstein. If every $M\in\catmodif R$ is Nakayama-stable, there is a bijection
\[
\upvarphi_*\colon \upmu_r\text{-}\MM S\simto \MM R.
\]
\begin{proof}
If every $M\in\catmodif R$ is Nakayama-stable, the functor $\upvarphi_*\colon \catmodif_{\upmu_r}S\to\catmodif R$ is dense by Lemma  \ref{pull back modif}. Hence the result follows from Corollary \ref{mm descent 2}. 
\end{proof}
\end{thm}

\begin{lem}\label{dense lem}
Assume that $r=2$. Then every $M\in\catmodif R$ is Nakayama-stable. 
\begin{proof}
Let $M\in\catmodif R$. By definition, $\Hom(M,M)\in\CM R$. Since 
\[\Hom(M,M\rp\omega_R)\cong\Hom(M,M)\rp\omega_R\cong(\Hom(M,M)^*)^{\vee}\cong \Hom(M,M)^{\vee},\] 
we have $\Hom(M,M\rp\omega_R)\in\CM R$. Since $\Hom(M\rp\omega_R)\cong \Hom(M,M\rp\omega_R)\in\CM R$, we see that $M$ is Nakayama-stable.  
\end{proof}
\end{lem}

\begin{thm}\label{index two}
Assume that $r=2$. Then the functor 
$
\upvarphi_*\colon\catmodif_{\upmu_2}S\to\catmodif R
$  induces a bijective map
\[
\upvarphi_*\colon\mbox{$\upmu_2$-}\MM S\simto \MM R.
\]
\begin{proof}
Since $\omega_R\in\CM R$, the canonical cover $S$ is Gorenstein by Proposition \ref{prop:canonical cover}\,(2). Hence, the result follows from  Lemma \ref{dense lem} and Theorem \ref{mm r cover}.
\end{proof}
\end{thm}

As applications of Theorem \ref{index two}, we obtain the following corollaries.

\begin{cor}\label{mu2 prop}
Assume that $r=2$. Then $R$ is MM-finite (resp. MM-simple) if and only if $S$ is $\upmu_2$-MM-finite (resp. $\upmu_2$-MM-simple). 
\end{cor}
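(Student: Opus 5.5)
The plan is to read the corollary off Theorem \ref{index two}, which gives a bijection of sets
\[
\upvarphi_*\colon \mbox{$\upmu_2$-}\MM S\simto \MM R.
\]
Both MM-finiteness (resp. MM-simplicity) of $R$ and $\upmu_2$-MM-finiteness (resp. $\upmu_2$-MM-simplicity) of $S$ are conditions on the cardinality of these two sets: non-empty and finite (resp. exactly one element). A bijection preserves cardinality, so the two conditions on either side match. This is the same argument that gives the corollary following Corollary \ref{mm descent 2}, specialized to $G=\upmu_2$.

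Concretely, I would check the hypotheses of Theorem \ref{index two} in our setting.
\begin{itemize}
\item $R$ is $\Q$-CY of index $2$.
\item $k$ has characteristic zero and contains $\varepsilon=-1$.
\item $S$ is the canonical cover of $R$.
\end{itemize}
Proposition \ref{prop:canonical cover}\,(3) gives that the $\upmu_2$-action is small. Gorensteinness of $S$ comes from Proposition \ref{prop:canonical cover}\,(2), since $\omega_R\in\CM R$ and $\omega_R^{[2]}\cong R$. Lemma \ref{dense lem} gives density of $\upvarphi_*\colon\catmodif_{\upmu_2}S\to\catmodif R$.

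One should also make sure that $\upvarphi_*$ really is a well-defined map on additive equivalence classes. Additive equivalence is preserved by $\upvarphi_*$ because $\upvarphi_*$ is additive and commutes with taking summands. Injectivity is Corollary \ref{modif inj}, and surjectivity onto $\MM R$ is Theorem \ref{main 2}\,(2); Theorem \ref{index two} already packages both. The proof therefore consists of citing Theorem \ref{index two} and comparing cardinalities, and I expect no real obstacle.
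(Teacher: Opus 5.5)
Your proposal is correct and matches the paper's argument: the corollary is stated as an immediate consequence of the bijection $\upvarphi_*\colon \upmu_2\text{-}\MM S\simto \MM R$ of Theorem \ref{index two}, and MM-finiteness and MM-simplicity are cardinality conditions that a bijection preserves. One small precision in your hypothesis check: density of $\upvarphi_*$ comes from Lemma \ref{dense lem} \emph{combined with} Lemma \ref{pull back modif}, as in the proof of Theorem \ref{mm r cover}, not from Lemma \ref{dense lem} alone.
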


\begin{cor}\label{mm fact}
Assume that $r=2$ and  one of the following conditions holds.
\begin{itemize}
\item[$(1)$] $S$ is a regular local ring.
\item[$(2)$] $S$ is a three-dimensional  $\Q$-factorial local hypersurface isolated singularity. 
\end{itemize}
Then $\upvarphi_*S$ is the unique MM $R$-module up to additive equivalence. In particular, $R$ is MM-simple. 
\begin{proof}
If (1) is satisfied, we have $\catmodif S=\proj S$ by \cite[Theorem 4.4]{AG60}. If (2) holds,  we have $\catmodif S=\proj S$ by results in \cite{Dao} (see also \cite[Theorem 2.10]{IW14b}). Hence $\upmu_2\text{-}\MM S=\{S\}$, and then the assertion follows from Theorem \ref{index two}. 
\end{proof}
\end{cor}

The following gives the first  example of a non-Gorenstein MM-simple normal domain that admits no NCCR. 

\begin{ex}
Let $m\in\Z_{\geq1}$, $S\defeq\C[\![x,y,z,w]\!]/\l xy-z^2-w^{2m+1}\r$ and $G\defeq\frac{1}{2}(1,1,1,0)$. Then 
\begin{align*}
S^G&=\C[\![x^2,y^2,z^2,xy,yz,zx,w]\!]/\l xy-z^2-w^{2m+1}\r\\
&\cong \frac{\C[\![X,Y,Z,T,U,V,W]\!]}{\l T-Z-W^{2m+1}, XY-T^2,YZ-U^2,XZ-V^2,XU-TV,YV-TU,ZT-UV\r}\\
&\cong\C[\![X,Y,Z,U,V,W]\!]/I_2(A),
\end{align*}
where $I_2(A)$ denotes the ideal generated by $2\times2$ minors  of the symmetric matrix given by
\[
A\defeq
\begin{pmatrix}
X&Z+W^{2m+1}&V\\
Z+W^{2m+1}&Y&U\\
V&U&Z
\end{pmatrix}.
\]
The invariant ring $R\defeq S^G$ is $\Q$-CY of index two, and $S$ is the canonical cover of $R$. Since $S$ is $\Q$-factorial, $R$ has a unique basic MM $R$-module $\upvarphi_*S\cong R\oplus \omega_R$ by Corollary \ref{mm fact}. 
\end{ex}

The following example illustrates an advantage of considering $G$-MM modules rather than $G$-equivariant MM modules.

\begin{ex}\label{ex no nccr}
Let $S\defeq \C[\![x,y,z,w]\!]/\l xy-zw\r$, and consider the small $\upmu_2$-action on $S$ defined  by
\[
(-1)\cdot x=-x, \quad(-1)\cdot y=-y, \quad(-1)\cdot z=w, \quad(-1)\cdot w=z.
\]
Then $R\defeq S^{\upmu_2}$ is $\Q$-CY of index two, and  $S$ is a canonical cover of $R$. Let $L\defeq \l x,z\r\subset S$, which is a generator of $\Cl S\cong \Z$. Then a basic MM $S$-module is of the form $N_{k}\defeq L^{[k]}\oplus L^{[k+1]}$ for some $k\in\Z$ (see \cite[Theorem 5.20]{IW18} for a more general result).  If $L^{[k]}$ is $\upmu_2$-equivariantizable, there is an $S$-module isomorphism $L^{[k]}\cong\upsigma_{(-1)}^*(L^{[k]})\cong L^{[-k]}$. Thus $L^{[k]}$ is $\upmu_2$-equivariantizable if and only if $k=0$. In particular, $\upmu_2\text{-}\MM S=\{S\}$. Hence, $S$ is not MM-finite but $\upmu_2$-MM-simple.  By Corollary \ref{mu2 prop}, $R$ is MM-simple and $\MM R=\{\upvarphi_*S\}$. 
Note that $\upvarphi_*S$ does not give an NCCR of $R$, since $S$ is not an MM $S$-module. Actually, although $S$ admits an NCCR, the quotient $R$ admits no NCCR by Corollary \ref{mm descent 2}.
 \end{ex}

\vspace{2mm}
\section{Three-dimensional terminal singularities of index two}
 
 In this section, by using results in the previous section, we prove Conjecture \ref{intro conj} for terminal singularities of index two and Conjecture \ref{intro conj 2} for $\Q$-Gorenstein rings of index two. For the latter, we generalize derived equivalences of MMAs over Gorenstein rings \cite{IW14a}, to derived equivalences of Gorenstein MMAs over $\Q$-Gorenstein rings.

  \vspace{2mm}
\subsection{Derived equivalences of Gorenstein MMAs}~
\vspace{2mm}

\vspace{2mm}
\subsubsection{Tilting modules}~
\vspace{2mm}

Let $R$ be a noetherian normal domain, and let $\Lambda$ be a module-finite $R$-algebra. 
First, we recall the definition of a (partial) tilting module.

\begin{dfn}\label{tilt def}
We say that $T\in \fmod \Lambda$ is {\it tilting} if the following conditions are satisfied.
\begin{itemize}
\item[(1)] $\prdim_{\Lambda} T\leq 1$.
\item[(2)] $\Ext^1_{\Lambda}(T,T)=0$.
\item[(3)] There exists an exact sequence
\begin{equation}\label{tilt seq}
0\to \Lambda\to T^0\to T^1\to 0
\end{equation}
with $T^0,T^1\in \add T$.
\end{itemize}
We say that $T$ is {\it partial tilting} if the first two conditions are satisfied. 
\end{dfn}

We call an object $A$ of a triangulated category $\scrT$  a {\it split generator} if the smallest thick subcategory of $\scrT$ containing $A$ is the whole category $\scrT$.

\begin{prop}[{\cite[Proposition 7.4]{NS}}]\label{tilting dfn}
Let $T\in\fmod \Lambda$ be a partial tilting $\Lambda$-module. Then $T$ is tilting if and only if $T$ is a split generator of  $\Kb(\proj\Lambda)$. 
\end{prop}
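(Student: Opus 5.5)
The statement is Proposition \ref{tilting dfn}: a partial tilting module $T$ is tilting if and only if $T$ is a split generator of $\Kb(\proj\Lambda)$. I will prove the two directions separately, working throughout in $\Kb(\proj\Lambda)$. Since $\prdim T\leq 1$, $T$ has a projective resolution $0\to P_1\to P_0\to T\to 0$, so $T$ may be regarded as an object of $\Kb(\proj\Lambda)$. Let $\mathrm{thick}(T)$ denote the smallest thick subcategory containing $T$.

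($\Rightarrow$) Suppose the sequence \eqref{tilt seq} exists. It gives a triangle
\[
\Lambda\to T^0\to T^1\to\Lambda[1]
\]
in $\Kb(\proj\Lambda)$. Both $T^0$ and $T^1$ lie in $\add T\subseteq\mathrm{thick}(T)$, so $\Lambda\in\mathrm{thick}(T)$. Every object of $\Kb(\proj\Lambda)$ lies in $\mathrm{thick}(\Lambda)$, being an iterated cone of shifts of projectives, which are summands of free modules. Hence $\mathrm{thick}(T)=\Kb(\proj\Lambda)$. This direction is formal.

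($\Leftarrow$) This is the real content. I would first build a candidate sequence by a Bongartz-type universal extension. Since $\Lambda$ is module-finite over a noetherian $R$, the group $\Ext^1_\Lambda(T,\Lambda)$ is a finitely generated $R$-module. Choose generators $e_1,\dots,e_n$ and let
\[
0\to\Lambda\to E\to T^{\oplus n}\to 0
\]
be the corresponding extension. The connecting map $\Hom(T,T^{\oplus n})\to\Ext^1(T,\Lambda)$ is then surjective. Combining this with $\Ext^1(T,T)=0$ and $\Ext^1(T,E)\to\Ext^1(T,T^{\oplus n})=0$ in the long exact sequence gives $\Ext^1(T,E)=0$. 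Moreover $\prdim E\leq 1$, and $\Ext^1(E,E)=0$ follows by a similar diagram chase. Thus $T\oplus E$ is partial tilting.

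The main obstacle is to show $E\in\add T$, and here the split generation hypothesis must enter. I would consider the subcategory
\[
\mathcal{X}=\{X\in\Kb(\proj\Lambda)\mid \Hom(T\oplus E,X[i])=0\ \text{for } i\neq 0\}.
\]
Alternatively, the standard argument goes as follows. Set $\Gamma=\End(T\oplus E)$. Because $T\oplus E$ has vanishing self-extensions in nonzero degrees (using $\prdim\leq1$), the functor $\mathbf{R}\Hom(T\oplus E,-)$ is fully faithful on $\mathrm{thick}(T\oplus E)=\Kb(\proj\Lambda)$. This thick subcategory is everything, since it contains $\Lambda$ via the sequence just built. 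The functor therefore identifies $\Kb(\proj\Lambda)$ with $\Kb(\proj\Gamma)$, sending $T\oplus E$ to $\Gamma$.

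Now use that $T$ alone split-generates. Its image $P=\Hom(T\oplus E,T)$ is a projective $\Gamma$-module that split-generates $\Kb(\proj\Gamma)$. A projective $P$ that thickly generates $\Kb(\proj\Gamma)$ satisfies $\add P=\proj\Gamma$. To see this, suppose an indecomposable projective $Q$ is not in $\add P$. Then the idempotent ideal construction, or a Hom-vanishing argument, produces a nonzero $\Gamma$-module $M$ with $\mathbf{R}\Hom(P,M)=0$ but $\Hom(Q,M)\neq0$, contradicting generation. This step needs care: when $\Gamma$ is not semiperfect, I would localize/complete at maximal ideals of $R$, where Krull–Schmidt holds, and $\add$ is detected locally for module-finite algebras. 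Hence $\Gamma\in\add P$, so $E\in\add T$, and the universal extension is the required sequence \eqref{tilt seq}.
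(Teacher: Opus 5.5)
Your proof is correct. The paper does not prove this statement; it quotes \cite[Proposition 7.4]{NS} and uses it only to identify tilting modules of projective dimension at most one with tilting complexes. Your argument is therefore a self-contained replacement for a citation, not a variant of a proof in the paper.

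The forward direction is the formal one. For the converse, your steps all hold:
\begin{itemize}
\item The Bongartz completion $0\to\Lambda\to E\to T^{\oplus n}\to 0$ works. $R$-module generators of $\Ext^1_\Lambda(T,\Lambda)$ suffice because $R$ is central, so $e_i\circ(r\cdot\id_T)=re_i$ lies in the image of the connecting map.
\item To call $T\oplus E$ partial tilting you also need $\Ext^1(E,T)=0$. It follows exactly like $\Ext^1(E,E)=0$.
\item Rickard's theorem applies to the classical tilting module $U=T\oplus E$ with $\Gamma=\End_\Lambda(U)$, sending $T$ to $e\Gamma$.
\item A projective $e\Gamma$ that thickly generates $\Kb(\proj\Gamma)$ satisfies $\add e\Gamma=\proj\Gamma$.
\end{itemize}

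The last step needs no localization, completion or Krull--Schmidt argument. Put $M=\Gamma/\Gamma e\Gamma$. Then $\mathbf{R}\Hom_\Gamma(e\Gamma,M)=Me=0$. The class of $X\in\Kb(\proj\Gamma)$ with $\mathbf{R}\Hom_\Gamma(X,M)=0$ is thick, so it contains $\Gamma$, which gives $M=0$. Hence $1\in\Gamma e\Gamma$, i.e.\ $\id_U$ factors through some $T^{\oplus m}$. This yields $E\in\add T$ directly, and the Bongartz sequence is the required sequence. The sentence introducing $\mathcal{X}$ is never used and can be deleted.

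Compared with the citation, your route shows that only two ingredients are needed. The first is module-finiteness of $\Lambda$ over $R$, so that $\Ext^1_\Lambda(T,\Lambda)$ is finitely generated for the Bongartz step. The second is standard derived Morita theory. The citation buys only brevity.
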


By Proposition \ref{tilting dfn}, a module $T\in\fmod\Lambda$ with $\prdim T\leq1$ is tilting if and only if it is a tilting complex in the usual sense. Thus we have the following.

\begin{lem}[{\cite[Remark 4.8]{GIK}}]
For $T\in\fmod \Lambda$, the following are equivalent:
\begin{itemize}
\item[$(1)$] $T$ is a tilting $\Lambda$-module. 
\item[$(2)$]$T_{\p}$ is a tilting $\Lambda_{\p}$-module for all $\p\in\Spec R$. 
\item[$(3)$] $T_{\m}$ is a tilting $\Lambda_{\m}$-module for all maximal ideals $\m$ of $R$.
\end{itemize}
\end{lem}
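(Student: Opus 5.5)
We prove the cycle (1)$\Rightarrow$(2)$\Rightarrow$(3)$\Rightarrow$(1). The implications (1)$\Rightarrow$(2)$\Rightarrow$(3) reduce to two facts: being partial tilting localizes, and condition (3) of Definition \ref{tilt def} localizes. The real content is (3)$\Rightarrow$(1), where the local data must be globalized.

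For (1)$\Rightarrow$(2), localize a projective resolution $0\to P_1\to P_0\to T\to 0$ at $\p$; it stays a projective resolution, so $\prdim_{\Lambda_\p}T_\p\leq 1$. Since $\Lambda$ is module-finite over the noetherian ring $R$, we have $\Ext^1_{\Lambda_\p}(T_\p,T_\p)\cong \Ext^1_\Lambda(T,T)_\p=0$. Localizing the sequence \eqref{tilt seq} gives $0\to\Lambda_\p\to T^0_\p\to T^1_\p\to0$ with $T^i_\p\in\add T_\p$. The step (2)$\Rightarrow$(3) is trivial.

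For (3)$\Rightarrow$(1), partial tilting globalizes directly. For each maximal ideal $\m$, the relevant quantities localize: $\Ext^2_\Lambda(T,-)_\m\cong\Ext^2_{\Lambda_\m}(T_\m,(-)_\m)$ on finitely generated modules, and $\Ext^1_\Lambda(T,T)$ is a finitely generated $R$-module that vanishes at every $\m$. Hence $\Ext^2_\Lambda(T,X)=0$ for all finitely generated $X$. This gives $\prdim T\le1$ by computing via a resolution by finitely generated projectives, and it also gives $\Ext^1_\Lambda(T,T)=0$.

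It remains to construct \eqref{tilt seq} globally. I would avoid gluing local sequences and instead use the universal construction. Let $f_1,\dots,f_n$ generate the finitely generated $R$-module $\Ext^1_\Lambda(T,\Lambda)$. They define an extension
\[
0\to\Lambda\to E\to T^{\oplus n}\to0.
\]
Apply $\Hom(T,-)$. The connecting map $\Hom(T,T^{\oplus n})\to\Ext^1(T,\Lambda)$ is surjective, and $\Ext^1(T,T)=0$, so $\Ext^1(T,E)=0$. Since $\Ext^1(T,\Lambda)=0$ is not given, we instead get $\Ext^1(E,E)=0$ from the long exact sequences, using $\prdim\le1$. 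The same argument gives $\prdim E\le 1$. Thus $T\oplus E$ is partial tilting.

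The main obstacle is showing $E\in\add T$. I would do this locally. First, the construction commutes with localization: the $f_i$ localize to generators of $\Ext^1_{\Lambda_\m}(T_\m,\Lambda_\m)$, so $E_\m$ is the universal extension for $T_\m$. Next, because $T_\m$ is tilting, the universal extension $E_\m$ lies in $\add T_\m$. To see this, use that $T_\m\oplus E_\m$ is partial tilting and contains the tilting module $T_\m$. For a tilting module, every partial tilting module $X$ with $\Ext^1(T_\m,X)=0$ lies in $\add T_\m$. This follows because $X\in T_\m^{\perp}$, which is the category of modules generated by $T_\m$, and a Bongartz/Miyashita-type argument with $\Ext^1(X,T_\m)=0$ shows that the resulting $T_\m$-approximation splits.

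Finally, pass from local to global. Membership $E\in\add T$ is detected locally for module-finite algebras over a noetherian commutative ring. Indeed, the evaluation map $\Hom(T,E)\otimes_{\End T}T\to E$ splitting is an Ext/surjectivity condition on finitely generated $R$-modules, e.g. the surjectivity of $\Hom(E,T^{\oplus m})\otimes\cdots$. Hence $E\in\add T$, and taking $T^0=E$, $T^1=T^{\oplus n}$ finishes (1).

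Alternatively, one can bypass the explicit construction. By Proposition \ref{tilting dfn}, it suffices to show $T$ split generates $\Kb(\proj\Lambda)$, which can be checked locally via the vanishing of $\Hom$ into $T$'s right orthogonal. I would keep this as a fallback if the local-to-global step for $\add$ proves delicate.
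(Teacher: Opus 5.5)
Your proof is correct, but your main argument is not the paper's route. The paper gives no module-theoretic argument. It uses Proposition \ref{tilting dfn} to identify a module of projective dimension at most one that is tilting with a tilting complex, and then quotes locality of tilting complexes from \cite[Remark 4.8]{GIK}.

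The standard reason behind that locality is derived. $\Hom$-vanishing between perfect complexes localizes, and generation is tested on right orthogonals: for perfect $T$ one has $\mathbf{R}\Hom_\Lambda(T,X)_{\p}\cong\mathbf{R}\Hom_{\Lambda_\p}(T_\p,X_\p)$. So an object orthogonal to $T$ is locally zero, hence zero. This is essentially your ``fallback''.

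Your main route stays inside $\fmod\Lambda$ and runs in four steps:
\begin{itemize}
\item[(a)] You globalize the partial-tilting conditions via finitely generated $\Ext$-modules.
\item[(b)] You perform Bongartz's completion $0\to\Lambda\to E\to T^{\oplus n}\to0$ globally.
\item[(c)] You show $E_\m\in\add T_\m$ using $T_\m^{\perp}\subseteq\operatorname{Gen}(T_\m)$ and Ext-projectivity.
\item[(d)] You detect membership in $\add T$ locally. This is \cite[Proposition 2.26]{IW14a}, which the paper itself uses in Lemma \ref{add lem} and Proposition \ref{local to global}.
\end{itemize}
Your approach is elementary and avoids compact generation entirely. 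It also writes down the global sequence in condition (3) explicitly, with $T^0=E$ and $T^1=T^{\oplus n}$. The paper's approach is shorter and makes visible that tilting is a derived notion whose defining conditions are each local.

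Two points need tightening.

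First, your sentence ``every partial tilting module $X$ with $\Ext^1(T_\m,X)=0$ lies in $\add T_\m$'' is false as written. Take $\Lambda$ the path algebra of the $A_2$ quiver over a field and $T=\Lambda$. The simple non-projective module is rigid, has projective dimension one, and satisfies $\Ext^1(\Lambda,-)=0$, yet it is not in $\add\Lambda$. You also need $\Ext^1(X,T_\m)=0$. You do have this for $X=E_\m$: apply $\Hom(-,T)$ to the Bongartz sequence to get $\Ext^1(E,T)=0$, and this localizes. With that hypothesis the splitting argument goes through:
\begin{itemize}
\item[(i)] Any finitely generated $Y\in T_\m^{\perp}$ is a quotient of some $T_\m^{\oplus k}$. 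Since $\Ext^1(X,T_\m)=0$ and $\Ext^2(X,-)=0$, this gives $\Ext^1(X,Y)=0$.
\item[(ii)] Let $T_0\to X$ be a right $\add T_\m$-approximation; it is surjective because $X\in\operatorname{Gen}(T_\m)$. Its kernel lies in $T_\m^{\perp}$.
\item[(iii)] By (i) applied to the kernel, the approximation splits, so $X\in\add T_\m$.
\end{itemize}

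Second, your local-to-global step for $\add$ is only sketched. A clean version is as follows.
\begin{itemize}
\item[(i)] Choose $R$-module generators $u_1,\dots,u_m$ of $\Hom_\Lambda(T,E)$. The resulting map $u\colon T^{\oplus m}\to E$ is a right $\add T$-approximation, and its localization at each $\m$ is again one.
\item[(ii)] Then $E\in\add T$ if and only if $u$ is a split epimorphism. This holds if and only if the cokernel of $u\circ(-)\colon\Hom_\Lambda(E,T^{\oplus m})\to\End_\Lambda(E)$ vanishes.
\item[(iii)] That cokernel is a finitely generated $R$-module. Its localization at each $\m$ is zero because $E_\m\in\add T_\m$, so it is zero.
\end{itemize}
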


\vspace{2mm}
\subsubsection{Derived equivalences of Gorenstein MMAs}~
\vspace{2mm}

Throughout this section, $R$ is a three-dimensional $\Q$-Gorenstein Cohen--Macaulay equi-codimensional normal domain of index $r$. Denote by $\omega_R$ a canonical $R$-module. 

We begin by giving the following easy observation. 

\begin{lem}\label{add lem}
For any $M\in\refl R$, we have $\add(M)=\add(M\cdot \omega_R^{[r]})$.
\begin{proof}
Since the index of $R$ is $r$, we have $\omega_R^{[r]}\in\proj R$. Since $R$ is normal, $\Supp (\omega^{[r]}_R)=\Spec R$.  Thus $(\omega_R^{[r]})_{\p}$ is a non-zero free $R_{\p}$-module for all $\p\in\Spec R$. Thus $\add(M_{\p})=\add(M_{\p}\cdot(\omega_R^{[r]})_{\p})=\add((M\cdot\omega_R^{[r]})_{\p})$ for all $\p\in\Spec R$. By \cite[Proposition 2.26]{IW14a}, we obtain $\add(M)=\add(M\cdot\omega^{[r]}_R)$. 
\end{proof}
\end{lem}

\begin{dfn}
Let $M\in\catmodif R$. The modification $R$-algebra $\bLam_M$ is {\it Gorenstein} if $\injdim_{\bLam_{M_{\p}}}\bLam_{M_{\p}}=\dim R_{\p}$ for all $\p\in\Spec R$.
\end{dfn}

For $M\in\catmodif R$, we write
\[
\omega_{\bLam_M}\defeq \Hom(\bLam_M,\omega_R)\in \CM \bLam_M. 
\]
By the canonical duality $(-)^{\vee}\colon \CM \bLam_M\to\CM \bLam^{\rm op}_M$, the Cohen--Macaulay module $\omega_{\bLam_M}$ is an injective object in $\CM \bLam_M$. In particular, 
\begin{equation}\label{eq:omega inj}
\Ext^i(X,\omega_{\bLam_M})=0\quad\text{for $i>0$ and $X\in\CM \bLam_M$}
\end{equation}
\begin{rem}\label{gor def}
By \cite[Lemma 3.2]{DITW}, the above definition of Gorenstein modification is equivalent to the definition given in \cite[Definition 2.14]{IW14a}. Namely, $\bLam_M$ is Gorenstein in the above sense  if and only if $\omega_{\bLam_M}\in\proj\bLam_M$, which is equivalent to $\add(\bLam_M)=\add(\omega_{\bLam_M})$ by \cite[Lemma 2.15]{IW14a}. 
\end{rem}

The following is a characterization of Gorenstein MMAs.

\begin{lem}\label{GM MMA}
Let $M\in\catMM R$. Then the following are equivalent.
\begin{itemize}
\item[$(1)$] $\bLam_{M}$ is  Gorenstein.
\item[$(2)$] $\add M=\add(M\cdot \omega_R)$
\item[$(3)$] $\add M=\add(M\cdot \omega_R^{[i]})$ for any $1\leq i\leq r$.
\item[$(4)$] $M$ is Nakayama-stable.
\item[$(5)$] $\bLam_{M}^{\rm op}$ is Gorenstein.
\end{itemize}
\begin{proof}
(1)$\Longleftrightarrow$(2) is \cite[Proposition 3.4]{DITW}, and (1)$\Longleftrightarrow$(5) is \cite[Lemma 3.2]{DITW}.  (3)$\Rightarrow$(2) is obvious. (2)$\Rightarrow$(3) follows from an equality $\add (M\cdot\omega^{[i]}_R)=\add(M\cdot \omega^{[i+1]}_R)$ for each $0\leq i\leq r-1$, which follows since $\bLam_{M\cdot\omega^{[i]}_R}\cong\bLam_M$ is also Gorenstein. Thus, (1)$\Longleftrightarrow$(2)$\Longleftrightarrow$(3)$\Longleftrightarrow$(5).\\  
(3)$\Rightarrow$(4) If we have $\add M=\add(M\cdot\omega_R^{[i]})$ for $1\leq \forall i\leq r$, then $\Hom(M,M\cdot \omega_R^{[i]})\in \CM R$ and $\Hom(M\cdot \omega_R^{[i]},M)\in \CM R$.  Thus $M$ is Nakayama-stable. \\
(4)$\Rightarrow$(2)  If $M$ is Nakayama-stable, then $\Hom(M,M\cdot\omega_R)$ and  $\Hom(M\cdot\omega_R,M)$ are both Cohen--Macaulay. Hence, $M\oplus (M\cdot\omega_R)\in\catmodif R$. Since $M$ and $M\cdot\omega_R$ are maximal modifying, we obtain $\add M=\add (M\cdot\omega_R)$. 
\end{proof}
\end{lem}

\begin{dfn}
Let $M,N\in\refl R$. A morphism $\alpha\colon M'\to N$ is said to be a {\it $(\add M)$-approximation} if $M'\in\add M$ and  $\alpha\circ (-)\colon\Hom(M,M')\to\Hom(M,N)$ is surjective.
\end{dfn}

The following is our key generalization of \cite[(1)$\Rightarrow$(2) in Theorem 4.12]{IW14a} to a $\Q$-Gorenstein case.

\begin{lem}\label{GM lemma}
Let $M\in\catMM R$, and let $N\in\catmodif R$. Assume that $\bLam_M$ is Gorenstein, and consider an exact sequence
\[
0\to L\xrightarrow{\beta}  M'\xrightarrow{\alpha} N,
\]
where $\alpha$ is an $(\add M)$-approximation. Then $L\in \add M$.
\begin{proof}
Since $M\in\catMM R$, it is enough to show that $M\oplus L\in \catmodif R$. Write 
\[
\sfF(-)\defeq \Hom(M,-)\colon \refl R\simto\refl\bLam_M.
\] 
Then
\begin{equation}\label{eq:6.4}
0\to \sfF(L)\xrightarrow{\sfF(\beta)}  \sfF(M')\xrightarrow{\sfF(\alpha)} \sfF(N)\to 0
\end{equation} 
is exact by assumption. Since $\sfF(M')\in\CM R$,  $\sfF(N)\in \refl R$ and $\dim R=3$, the depth lemma implies that $\Hom(M,L)=\sfF(L)\in\CM R$. By Lemma \ref{GM MMA}, the $R$-algebras $\bLam_M$ and $\bLam_{M\cdot \omega_R^{[r-1]}}$ are Morita equivalent. Put $\sfG(-)\defeq \Hom(M\cdot\omega_R^{[r-1]},-)\colon \refl R\simto \refl\bLam_{M\cdot\omega_R^{[r-1]}}$. Then the functors $\sfF$ and $\sfG$ can be identified via Morita equivalence of $\bLam_M$ and $\bLam_{M\cdot \omega^{[r-1]}_R}$, and so  the sequence $0\to \sfG(L)\xrightarrow{\sfG(\beta)}  \sfG(M')\xrightarrow{\sfG(\alpha)} \sfG(N)\to 0$ is also exact. Since $M$ is Nakayama-stable by Lemma \ref{GM MMA}, we have $\sfG(M')\in\CM R$. Hence, by the same argument, we obtain $\Hom(M\cdot \omega_R^{[r-1]},L)=\sfG(L)\in\CM R$. By Lemma \ref{add lem},
\[
\Hom(L,M)\in\add\left(  \Hom(L,M\cdot\omega^{[r-1]}_R)\cdot\omega_R\right)=\add\left( \Hom(M\cdot\omega_R^{[r-1]},L)^{\vee}\right).
\]
Since the canonical dual preserves Cohen--Macaulay modules, we have $\Hom(L,M)\in\CM R$. 

Now it remains to prove that  $\End(L)\in\CM R$. By a reflexive equivalence, there is an $R$-module isomorphism $\End(L)\cong \End(\sfF(L))$. Hence, by \cite[Lemma 2.7]{IW14a}, it suffices to show that 
$\fl\Ext^1_{\bLam_M}(\sfF(L),\sfF(L))=0$, where $\fl(-)$ denotes the finite length submodule of $(-)$. The vanishing  $\fl\Ext^1_{\bLam_M}(\sfF(L),\sfF(L))=0$ follows from the identical argument as in the latter part of the proof of  \cite[Lemma 4.10]{IW14a}. We repeat it for the reader's convenience. 

Consider the following commutative diagram
 \[
\begin{tikzcd}
\Hom(\sfF(M'),\sfF(M'))\arrow[rr, "f\defeq\sfF(\alpha)\circ(-)"]\arrow[d, "a\defeq(-)\circ \sfF(\beta)"']&&\Hom(\sfF(M'),\sfF(N))\arrow[d, "b\defeq(-)\circ \sfF(\beta)"]\\
\Hom(\sfF(L),\sfF(M'))\arrow[rr, "g\defeq\sfF(\alpha)\circ(-)"]&&\Hom(\sfF(L),\sfF(N)).
\end{tikzcd} 
\]
Applying $\Hom(\sfF(L),-)$ to the exact sequence \eqref{eq:6.4}, we obtain a morphism 
\[
\gamma\colon \Ext^1(\sfF(L),\sfF(L))\to \Ext^1(\sfF(L),\sfF(M'))
\] 
and an exact sequence
\begin{equation}\label{eq:6.4 2}
0\to \Cok(g)\to \Ext^1(\sfF(L),\sfF(L))\to \Im(\gamma)\to 0.
\end{equation}
Since $\Hom(\sfF(L),\sfF(M'))\cong\Hom(L,M')\in\CM R$ and $\sfF(M')\in\CM R$, we have 
\[\fl\Ext^1(\sfF(L),\sfF(M'))=0\] 
by \cite[Lemma 2.7]{IW14a}. In particular, $\fl \Im(\gamma)=0$. By \eqref{eq:6.4 2}, it is enough to show that $\fl \Cok(g)=0$.  There is an exact sequence
\begin{equation}\label{eq: 6.4 3}
\Cok(a)\to \Cok(g\circ a)\to\Cok(g)\to 0.
\end{equation}
Since $\sfF(M')\in\proj \bLam_M$, the map $f$ is surjective. Thus $\Cok(g\circ a)=\Cok(b\circ f)=\Cok(b)$. Since $\Cok(b)\subseteq \Ext^1(\sfF(N),\sfF(N))$ and $\fl\Ext^1(\sfF(N),\sfF(N))=0$ by \cite[Lemma 2.7]{IW14a}, we see that $\fl\Cok(g\circ a)=\fl\Cok(b)=0$. For $\p\in\Spec R$ with $\text{ht}(\p)=2$, we have $\sfF(N)_{\p}\in\CM \bLam_{M_{\p}}$,  and by Remark \ref{gor def}, we have $\sfF(M')_{\p}\in\proj\bLam_{M_{\p}}=\add(\omega_{\bLam_{M_{\p}}})$ since $\bLam_{M_{\p}}$ is Gorenstein. Hence, by \eqref{eq:omega inj}, we have $\Ext^1(\sfF(N),\sfF(M'))_{\p}\cong \Ext^1(\sfF(N)_{\p},\sfF(M')_{\p})=0$ for $\p\in\Spec R$ with $\text{ht}(\p)=2$, and so $\Ext^1(\sfF(N),\sfF(M'))$ has finite length. 
Since $\Cok(a)\subseteq \Ext^1(\sfF(N),\sfF(M'))$,  the module $\Cok(a)$ also has finite length, and so the first map in \eqref{eq: 6.4 3} is the zero map. Thus $\fl\Cok(g)\cong\fl\Cok(g\circ a)=0$. 
\end{proof}
\end{lem}

The following gives a proof of a version of three-dimensional noncommutative Bondal--Orlov conjecture. Due to Lemma \ref{GM lemma}, we can prove it by a similar argument as in \cite[Proposition 4.14]{IW14a}. For the reader's convenience, we give a proof. 

\begin{thm}\label{Q-CY tilt}
Let $M\in\catMM R$, and assume that $\bLam_M$ is Gorenstein. Let $N\in\catmodif R$, and set $T\defeq\Hom(M,N)\in\refl \bLam_M$. 
\begin{itemize}
\item[$(1)$] The $\bLam_M$-module $T$ is partial tilting. 
\item[$(2)$] If $N\in\catMM R$ with $\bLam_N$ Gorenstein, then $T$ is a tilting $\bLam_M$-module. 
\end{itemize}
In particular, all Gorenstein MMAs are derived equivalent to each other. 
\begin{proof}
(1) By Lemma \ref{GM lemma}, we have $\prdim T\leq 1$. Let $\p\in\Spec R$ with $\text{ht}(\p)=2$. Then $T_{\p}\cong \Hom_{R_{\p}}(M_{\p},N_{\p})\in\refl R_{\p}=\CM R_{\p}$. Since the modification  $R_{\p}$-algebra $(\bLam_{M})_{\p}\cong \bLam_{M_{\p}}$ is Gorenstein and $\prdim T_{\p}\leq 1$, the CM $R_{\p}$-module $T_{\p}$ is a projective $\bLam_{M_{\p}}$-module by Auslander--Buchsbaum formula \cite[Lemma 2.16]{IW14a}. 
Hence $\Ext^1_{\bLam_M}(T,T)_{\p}\cong\Ext^1_{\bLam_{M_{\p}}}(T_{\p},T_{\p})=0$, and so it suffices to show that $\Ext^1_{\bLam_M}(T,T)_{\m}\cong \Ext^1_{\bLam_{M_{\m}}}(T_{\m},T_{\m})=0$ for all maximal ideals $\m$ of $R$. 

By the above argument,  we have  $\Ext^1_{\bLam_{M_{\m}}}(T_{\m},T_{\m})=\fl\Ext^1_{\bLam_{M_{\m}}}(T_{\m},T_{\m})$. Since $\bLam_{M_{\m}}$ is Gorenstein and $\Hom_{\bLam_{M_{\m}}}(T_{\m},T_{\m})\cong \Hom_{R_{\m}}(N_{\m},N_{\m})\in\CM R_{\m}$, we obtain the vanishing $\fl\Ext^1_{\bLam_{M_{\m}}}(T_{\m},T_{\m})=0$ by \cite[Lemma 2.7]{IW14a}.\\
(2) By (1), we only need to show that $T$ satisfies the condition (3) in Definition \ref{tilt def}. Let  $N\in\catMM R$ such that $\bLam_N$ is Gorenstein. Then $N^*\in\catMM R$. By Lemma \ref{GM MMA}, the MMA $\bLam_{N^*}\cong \bLam^{\rm op}_N$ is also Gorenstein. By Lemma \ref{GM lemma}, there are $N_0, N_1\in \add N$ and an exact sequence 
$
0\to N^*_1\to N^*_0\to M^*
$
such that the induced sequence
\begin{equation}\label{eq 6.10}
0\to \sfF(N^*_1)\to \sfF(N^*_0)\to \sfF(M^*)\to 0
\end{equation} 
is exact, where $\sfF(-)\defeq\Hom(N^*,-)$. Since $\sfF(M^*)$ is  partial tilting over $\bLam_{N^*}$ by (1), application of $\Hom(-,\sfF(M^*))$ to the sequence \eqref{eq 6.10} gives the following exact sequence
\[
0\to \Hom(\sfF(M^*),\sfF(M^*))\to\Hom(\sfF(N^*_0),\sfF(M^*))\to\Hom(\sfF(N_1^*),\sfF(M^*))\to0.
\]
Since $\sfF(-)\circ (-)^*\colon \refl R\to \refl \bLam_{N^*}$ is a duality, this exact sequence can be rephrased as an exact sequence
\[
0\to \bLam_M\to\Hom(M,N_0)\to\Hom(M,N_1)\to0.
\]
Since $\Hom(M,N_i)\in\add T$, this proves the condition (3) in Definition \ref{tilt def}. 
\end{proof}
\end{thm}

The following proves Conjecture \ref{intro conj 2} for $\Q$-Gorenstein rings of index two. 

\begin{cor}\label{goren mma}
Let $R$ be a three-dimensional Cohen--Macaulay equi-codimensional normal domain. If $R$ is $\Q$-Gorenstein of index two, then all MMAs are Gorenstein, and they are derived equivalent to each other.
\begin{proof}
Let $M$ be an MM $R$-module. By a similar argument as in the proof of Lemma \ref{dense lem}, we have $\Hom(M,M\cdot \omega_R)\in\CM R$. Moreover, 
\[
\Hom(M\cdot\omega_R,M)\cong \Hom(M\cdot\omega^{[2]}_R,M\cdot \omega_R)\in\add(\Hom(M,M\cdot \omega_R))\subseteq \CM R,
\]
where we use $\add(M)=\add(M\cdot \omega_R^{[2]})$ from Lemma \ref{add lem}. Thus, $M$ is Nakayama-stable. 
By  Lemma \ref{GM MMA}, the MMA $\bLam_M$ is Gorenstein, and Gorenstein MMAs are derived equivalent by  Theorem \ref{Q-CY tilt}\,(2). 
\end{proof}
\end{cor}

The following application of Theorem \ref{Q-CY tilt} will be necessary later.

\begin{cor}\label{ascending stop}
Assume that $R$ is complete local and admits an MM module $M$ such that $\bLam_M$ is Gorenstein. Denote by $|M|$ the number of indecomposable direct summands of $M$ that are non-isomorphic to each other. Then for any $N\in\catmodif R$, we have $|N|\leq |M|$. In particular, if there is an ascending chain
\[
 N^1<N^2<\cdots<N^{\ell}
\]
in the poset $\modif R$, then $\ell\leq |M|$. 
\begin{proof}
Let $N\in\catmodif R$. By Theorem \ref{Q-CY tilt}\,(1), the $\bLam_M$-module  $\Hom(M,N)$ is  partial tilting. Hence,  by \cite[Lemma 4.13]{IW14a}, there is $L\in\refl R$ such that $T\defeq\Hom(M,L)$ is a tilting $\bLam_M$-module and $N\in\add L$. Then $\bLam_M$ is derived equivalent to $\End_{\bLam_M}(T)\cong \bLam_L$. Since $R$ is complete local, $\fmod R$ is Krull--Schmidt, and $|N|<\infty$. We have 
\[
|N|\leq |L|=\rk K_0(\Kb(\proj\bLam_L))=\rk K_0(\Kb(\proj\bLam_M))=|M|.
\]
This proves the first assertion. The second statement follows from the first one, since an inequality $N^i<N^{i+1}$ in $\modif R$ implies an inequality $|N^i|<|N^{i+1}|$. 
\end{proof}
\end{cor}

 \vspace{2mm}
\subsection{Existence of MMAs: Complete local case}~
\vspace{2mm}

In this subsection, we prove Conjecture \ref{intro conj}\,(2)  for terminal singularities of index two. Let $R$ be a three-dimensional complete local $\C$-algebra with the residue field equal to $\C$. Assume that $R$ has a terminal singularity, and the index $r$ of $R$ is greater than one. Then the canonical cover $S$ of $R$ is Gorenstein and has at worst terminal singularities. If $S$ is regular, then $R$ has only a  quotient singularity, and so an MMA exists in this case. If $S$ is singular, it is isomorphic to an isolated cDV singularity  $\C[\![x,y,z,t]\!]/f$ for some $f\in\C[x,y,z,t]$. By Mori \cite[Theorem 12, 23, 25]{mor} (see also \cite[Theorem 8.3.18]{Ish}), (a standard form of) such a polynomial $f$ and the natural $\upmu_r$-action on the canonical cover $S=\C[\![x,y,z,t]\!]/f$ are classified into six families as in Figure \ref{Figure 1}. 
The $\upmu_r$-weights of polynomial $f$ in Figure \ref{Figure 1} are assumed to be zero except for the type $cAx/4$, and the weight of $f$ in the $cAx/4$ case is supposed to be two. Conversely, if $f$ and $\upmu_r$-action are taken from Figure \ref{Figure 1} and $(\C[\![x,y,z,t]\!]/f)^{\upmu_r}$ has only an isolated singularity, then it has a terminal singularity by \cite{KSB}.

\begin{figure}
\begin{center}
\renewcommand{\arraystretch}{1.65}
\setlength{\tabcolsep}{6pt}
\setlength{\arrayrulewidth}{0.5pt}

\resizebox{\textwidth}{!}{%
\begin{tabular}{
  |c|
  l|
  >{$\displaystyle}l<{$}|
  c
  >{$\displaystyle}l<{$}|
  >{$\displaystyle}l<{$}|
}
\hline
  \text{Type}  & \text{ Index} & \upmu_r\text{-action} & $f$&  & \text{Condition} \\
\hline
$cA/r$ & $r\geq 2$
    & \dfrac{1}{r}(a,-a,1,0)
    &
    & xy+g(z^r,t)
    & g\in\mathfrak{m}^{2},\ (a,r)=1
    \\[4pt]
\hline
\multirow{2}{*}{$cAx/4$} & \multirow{2}{*}{4}
    & \multirow{2}{*}{$\dfrac{1}{4}(1,1,3,2)$}
    &
    & xy+z^{2}+g(t)
    & g\in\mathfrak{m}^{3}
    \\[4pt]
    & & & or
    & x^{2}+z^{2}+g(y,t)
    & g\in\mathfrak{m}^{3}
    \\
\hline
$cAx/2$ & 2
    & \dfrac{1}{2}(0,1,1,1)
    &
    & x^{2}+y^{2}+g(z,t)
    & g\in\mathfrak{m}^{4}
    \\[4pt]
\hline
 & 
    & 
    &
    & x^{2}+y^{3}+z^{3}+t^{3}
    &
    \\[4pt]

  $cD/3$  & 3& \dfrac{1}{3}(0,2,1,1)& or
    & x^{2}+y^{3}+z^{2}t+yg(z,t)+h(z,t)
    & g\in\mathfrak{m}^{4},\ h\in\mathfrak{m}^{6}
    \\

    & & & or
    & x^{2}+y^{3}+z^{3}+yg(z,t)+h(z,t)
    & g\in\mathfrak{m}^{4},\ h\in\mathfrak{m}^{6}
    \\
\hline
 & 
    & 
    &
    & x^{2}+y^{3}+yzt+g(z,t)
    & g\in\mathfrak{m}^{4}
    \\[4pt]

   $cD/2$ & 2& \dfrac{1}{2}(1,0,1,1)& or
    & x^{2}+yzt+y^{n}+g(z,t)
    & g\in\mathfrak{m}^{4},\ n\geq 4
    \\

    & & & or
    & x^{2}+yz^{2}+y^{n}+g(z,t)
    & g\in\mathfrak{m}^{4},\ n\geq 3
    \\
\hline
$cE/2$ & 2
    & \dfrac{1}{2}(1,0,1,1)
    &
    & x^{2}+y^{3}+yg(z,t)+h(z,t)
    & g,h\in\mathfrak{m}^{4},\ h\not\in \mathfrak{m}^{5}.
    \\[4pt]
\hline
\end{tabular}%
}
\end{center}
\caption{}\label{Figure 1}
\end{figure}

The following proves Conjecture \ref{intro conj}\,(2) for terminal singularities of index two. 

\begin{cor}\label{MM ex local}
Let  $\Spec R$ be  a normal affine 3-fold over $\C$ with only terminal singularities of type $cA/2$, $cAx/2$, $cD/2$ or $cE/2$. Then $\widehat{R_{\m}}$ admits an MMA for all maximal ideals $\m$. 

\begin{proof}
We may assume that $\widehat{R_{\m}}$ is  non-Gorenstein. Then, by assumption, the index of $\widehat{R_{\m}}$ is two. Denote by $\widehat{S}$ the canonical cover of $\widehat{R_{\m}}$. Then $\widehat{S}$ is a complete local isolated cDV singularity, and $\bigl(\widehat{S}\bigr)^{\upmu_2}=\widehat{R_{\m}}$. 
 By Theorem \ref{index two}, it is enough to show that there exists a $\upmu_2$-MM $\widehat{S}$-module.

Since  $\widehat{S}$ is a Gorenstein terminal singularity, it has an MM module $M$ \cite[Theorem 1.9]{IW14b}  (see also \cite[Proposition 0.11]{IWtits}).  Since $\widehat{S}$ is Gorenstein, the MMA $\bLam_M$ is automatically Gorenstein. 
Let $N$ be a $\upmu_2$-equivariantizable modifying $\widehat{S}$-module (e.g. $N$ can be taken as $\widehat{S}$).  If $N$ is not $\upmu_2$-MM, we can take a $\upmu_2$-equivariantizable modifying module $N'$ with $N<N'$ in $\modif \widehat{S}$. By Corollary \ref{ascending stop}, this process terminates, and so we obtain a $\upmu_2$-MM module. 
\end{proof}
\end{cor}

 \vspace{2mm}
\subsection{Existence of MMAs: Global case}~
\vspace{2mm}

 Finally, we globalize Corollary \ref{MM ex local}.  To this end, we prove the following local-to-global property of the existence of MMAs.

\begin{prop}\label{local to global} 
Let $R$ be a three-dimensional $\Q$-Gorenstein excellent Cohen--Macaulay equi-codimensional normal domain of index $r$.
Assume the following conditions:
\begin{itemize}
\item[$(1)$] $R$ has only finitely many singular points $\m_1,\hdots,\m_n$. 
\item[$(2)$] For each $1\leq i\leq n$, there is an MM $\widehat{R_{\m_i}}$-module $M^i$ with $\bLam_{M^i}$ Gorenstein.
\end{itemize}
Then, if there is an ascending chain
\[
N^1<N^2<\cdots<N^{\ell}
\]
in the poset $\modif R$, then $\ell\leq 1+\sum_{k=1}^n(|M^k|-1)$. In particular, $R$ admits an MMA. 
\begin{proof}
Consider the inequality $N^i<N^{i+1}$ for each $1\leq i\leq \ell-1$. By \cite[Proposition 2.26]{IW14a}, there is $\p\in\Spec R$ such that $\widehat{N^i_{\p}}<\widehat{N^{i+1}_{\p}}$.  Since each $\widehat{N^i}$ is modifying, if $\widehat{R_{\q}}$ is regular, then $\add \widehat{N^i_{\q}}=\add \widehat{R_{\q}}=\add \widehat{N^{i+1}_{\q}}$. Thus $\p=\m_{j}$ for some $1\leq j\leq n$. For each $1\leq k\leq n$, set
 \[
 I_k\defeq\left\{1\leq j\leq \ell-1 \,\middle|\, \widehat{N^{j}_{\m_k}}<\widehat{N^{j+1}_{\m_k}}\right\}.
 \]
 If we write $n_k\defeq\# I_k$ and $I_k=\{j_1<j_2<\cdots<j_{n_k}\}$, there is an ascending chain
 \[
 \widehat{N^{j_1}_{\m_k}}<\widehat{N^{j_2}_{\m_k}}<\cdots<\widehat{N^{j_{n_k}}_{\m_k}}<\widehat{N^{j_{n_k+1}}_{\m_k}}
 \]
 in the poset $\modif \widehat{R_{\m_k}}$. By Corollary \ref{ascending stop}, we have $n_k\leq |M^k|-1$. Hence 
 \[\ell\leq 1+\sum_{k=1}^nn_k\leq 1+\sum_{k=1}^n(|M^k|-1).\] 
This proves the first assertion. The second assertion follows from the first one, which implies that any ascending chain in $\modif R$  terminates.
\end{proof}
\end{prop}

The following proves Conjecture \ref{intro conj}\,(1) for terminal singularities of index two. 

\begin{thm}
Let  $\Spec R$ be  a normal affine 3-fold over $\C$ with only terminal singularities of index two. Then $R$ admits an MMA. 
\begin{proof}
Since $R$ is three-dimensional terminal, it has only finitely many isolated singularities $\m_1,\hdots,\m_n$. If the canonical cover $\widehat{S}$ of $\widehat{R_{\m_i}}$ is regular, $\widehat{R_{\m_i}}$ admits an NCCR, which is a Gorenstein MMA. If $\widehat{S}$ is singular, by Corollary \ref{goren mma} and Corollary \ref{MM ex local},  $\widehat{R_{\m_i}}$ admits a Gorenstein MMA. Therefore $R$ admits an MMA by Proposition \ref{local to global}. 
\end{proof}
\end{thm}

\end{document}